%% file: main.tex
\documentclass[a4paper,10pt]{article}
\usepackage[a4paper, inner=2cm, outer=2cm, top=2.3cm, bottom=2.3cm]{geometry}

\usepackage{amssymb,amsthm,amsmath}
\usepackage{mathrsfs}
\usepackage{mathtools}
\usepackage[bookmarks]{hyperref}
\usepackage{latexsym}
\usepackage{enumerate}
\usepackage[all]{xy}
\usepackage[T1]{fontenc}
\usepackage{indentfirst}
\usepackage{color}
\usepackage{graphicx}
\usepackage{verbatim}
\usepackage{comment}
\usepackage{bm}
\usepackage{tikz}
\usepackage{tikz-cd}
\usetikzlibrary{decorations.pathmorphing}
\usepackage{changepage} 
\usepackage{lipsum} 
\usepackage{url}

\usepackage{changepage} 
\usepackage{indentfirst}
\usepackage{float}
\usepackage{physics}

\newcommand{\hyperplanesBtwo}{
    \begin{scope}[black!20]
        \clip (-2.3,-2.3) -- (-2.3,2.3) -- (2.3,2.3) -- (2.3,-2.3) -- (-2.3,-2.3);
        \foreach \i in {-3,...,3}{
        \draw
            (-3,\i)--(3,\i)
            (\i,-3)--(\i,3)
            (-4,\i*2)--(4,\i*2+8)
            (\i*2,-4)--(\i*2+8,4)
            (-5,\i*2)--(3,\i*2+8)
            (\i*2,-5)--(\i*2+8,3)
            (4,\i*2)--(-4,\i*2+8)
            (\i*2,-4)--(\i*2-8,4)
            (5,\i*2)--(-3,\i*2+8)
            (\i*2,-5)--(\i*2-8,3);
        }
    \end{scope}
}

\newcommand{\an}{\textnormal{an}}

\newcommand{\der}{\textnormal{der}}

\newcommand{\Op}{\textnormal{op}}
\newcommand{\nr}{\textnormal{nr}}

\newcommand{\rs}{\textnormal{rs}}
\newcommand{\Sc}{\textnormal{sc}}

\newcommand{\un}{\textnormal{un}}

\newcommand{\vol}{\textnormal{vol}}
\newcommand{\red}{\textnormal{red}}

\DeclareMathOperator{\Mod}{Mod}
\DeclareMathOperator{\Rep}{Rep}
\DeclareMathOperator{\Irr}{Irr}

\DeclareMathOperator{\val}{val}

\DeclareMathOperator{\Ad}{Ad}

\DeclareMathOperator{\cusp}{cusp}

\DeclareMathOperator{\End}{End}
\DeclareMathOperator{\Fr}{Fr}
\DeclareMathOperator{\Frob}{Frob}
\DeclareMathOperator{\Gal}{Gal}

\DeclareMathOperator{\Hom}{Hom}
\DeclareMathOperator{\Ind}{Ind}

\DeclareMathOperator{\SL}{SL}
\DeclareMathOperator{\SU}{SU}

\DeclareMathOperator{\im}{Im}

\DeclareMathOperator{\aff}{aff}
\DeclareMathOperator{\ind}{Ind}

\DeclareMathOperator{\fdeg}{fdeg}

\DeclareMathOperator{\Lie}{Lie}

\DeclareMathOperator{\cond}{cond}
\DeclareMathOperator{\triv}{triv}
\DeclareMathOperator{\Stab}{Stab}
\DeclareMathOperator{\ord}{ord}

\newtheorem{theorem}{Theorem}[section]
\newtheorem{lemma}[theorem]{Lemma}
\newtheorem{proposition}[theorem]{Proposition}

\theoremstyle{definition}
\newtheorem{definition}[theorem]{Definition}
\theoremstyle{remark}
\newtheorem{remark}[theorem]{Remark}
\newtheorem{example}[theorem]{Example}

\newtheorem{conjecture}{Conjecture}[section]

\title{Formal degree of principal series of quasi-split groups}
\author{Giulio Ricci\\
Radboud Universiteit Nijmegen\\
Heyendaalseweg 135, 6525AJ Nijmegen, the Netherlands\\
\texttt{giulio.ricci@ru.nl}}
\date{\today}

\begin{document}

\maketitle

\begin{abstract}   
    Let $\mathcal{G}$ be a quasi-split connected reductive group over a non-archimedean local field $F.$ In this paper, we prove the formal degree conjecture for discrete series representations contained in a principal series of $\mathcal{G}(F)$. We first construct a type for each Bernstein component attached to a principal series representation of $\mathcal{G}(F).$ We then use these types and the local Langlands correspondence for principal series representations constructed by Solleveld to verify the formal degree conjecture. Our approach follows a similar strategy to the case of split groups, reducing the problem to the case of unipotent representations of some endoscopic group.
\end{abstract}

\tableofcontents

\section{Introduction}
    Let $\mathcal{G}$ be a connected reductive quasi-split group over a non-archimedean local field $F$ with residue field of order $q_F$ and characteristic $p,$ and let $\mathcal{T}\subset \mathcal{G}$ be a maximal torus in $\mathcal{G}$ containing a maximal split torus $\mathcal{S}$. Write $G=\mathcal{G}(F)$, $T=\mathcal{T}(F)$ and $S=\mathcal{S}(F).$ The aim of this paper is to prove the formal degree conjecture for discrete series representations contained in a principal series of $G$. This generalizes the previous work of the author \cite{Ricci} where the same result was proved for split groups. 
    
    The formal degree conjecture, formulated in 2007 by Hiraga, Ichino, and Ikeda \cite[Conjecture 1.4]{HII}, predicts a relationship between the formal degree of a discrete series representation and the adjoint $\gamma$-factor associated with its $L$-parameter whenever a local Langlands correspondence is known. 

    We write $G^{\vee}$ for the complex dual group of $G$. Let $Z(G)_s$ be the the $F$-points of the maximal $F$-split central torus in $G.$ For $\varphi$ an $L$-parameter of $G^{\vee,}$ we write $$S_{\varphi}^{\sharp}:=\pi_0(Z_{(G/Z(G)_s)^{\vee}}(\varphi))$$ for the component group of the centralizer of $\varphi$ in $(G/Z(G)_s)^{\vee}.$ We fix an order-0 additive character $\psi$ of $F$ and attach to $\psi$ and $\varphi$ the adjoint $\gamma$-factor $\gamma(0,\Ad_{G^{\vee}}\circ\varphi,\psi),$ as defined in \cite{ArithmeticInv}.

    \begin{conjecture}\cite[1.4]{HII}\label{HII}
        Assume we have a local Langlands correspondence for $G$, and let $\pi$ be an irreducible discrete series representation of $G$. Let $(\varphi_{\pi},\rho_{\pi})$ be the enhanced $L$-parameter associated to it and let $\psi$ be an order-0 additive character of $F$. Then 
        $$\fdeg(\pi)=\frac{\dim(\rho_{\pi})}{\abs{S_{\varphi_\pi}^{\sharp}}}\abs{\gamma(0,\Ad_{G^{\vee}}\circ\varphi_{\pi},\psi)}.$$
    \end{conjecture}

\begin{theorem}\label{FirstTheorem}
     Let $\mathcal{G}$ be a connected reductive quasi-split group over $F.$ Consider the local Langlands correspondence for principal series representations of quasi-split groups as in \cite{quasisplitps}. Then Conjecture \ref{HII} is true for every irreducible discrete series contained in a parabolically induced representation from a maximal torus of $\mathcal{G}(F)$ containing a maximal split torus.
\end{theorem}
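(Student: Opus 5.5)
We will follow the strategy of the split case \cite{Ricci}: reduce the formal degree computation, through types and Hecke algebras, to unipotent representations of an endoscopic group, where the conjecture is already known.

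\textbf{Step 1: types and Hecke algebras.} Fix a Bernstein component $\mathfrak{s}=[T,\chi]_G$ of $G$. The inertial class of $\chi$ is determined by its restriction to the maximal bounded subgroup $T_0$ of $T$. We construct a type $(K,\rho)$ for $\mathfrak{s}$, with $K$ a parahoric-type subgroup attached to a point of the Bruhat--Tits building. The construction extends $\chi|_{T_0}$ via the root subgroups, as in Roche's construction of principal series types, adapted to the quasi-split setting with relative roots and non-reduced root systems. We then compute $\mathcal{H}(G,K,\rho)$: we show it is isomorphic to an affine Hecke algebra with unequal parameters, possibly twisted by a finite group $R_\chi$. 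Its data are those of the Iwahori--Hecke algebra of an unramified group $H$ whose dual is the (connected component of the) centralizer of the restriction of the Langlands parameter of $\chi$ to inertia. We also need the normalization: the isomorphism should preserve the canonical traces, up to the explicit factor $\vol(K)^{-1}\dim\rho$ versus $\vol(I_H)^{-1}$. Combined with the theory of Plancherel measures for types (Bushnell--Henniart--Kutzko), this gives
\[
\fdeg(\pi)=\frac{\dim\rho\,\vol(I_H)}{\vol(K)}\cdot\fdeg(\pi_H),
\]
where $\pi_H$ is the corresponding unipotent discrete series of (a possibly disconnected extension of) $H$.

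\textbf{Step 2: matching Langlands parameters.} Solleveld's LLC for principal series \cite{quasisplitps} is built from the same Hecke algebra (via graded Hecke algebras and the Kazhdan--Lusztig parametrization). We check that under our isomorphism the enhanced parameter $(\varphi_\pi,\rho_\pi)$ corresponds to the unipotent parameter $(\varphi_{\pi_H},\rho_{\pi_H})$ of $H$ given by Lusztig's LLC. We also track $S^\sharp_\varphi$ versus $S^\sharp_{\varphi_H}$, the discrepancy being governed by $R_\chi$ and the central tori. The conjecture for unipotent representations (Opdam, Feng--Opdam--Solleveld) then yields the HII formula for $\pi_H$.

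\textbf{Step 3: comparing $\gamma$-factors.} It remains to show
\[
\frac{|\gamma(0,\Ad_{G^\vee}\circ\varphi_\pi,\psi)|}{|\gamma(0,\Ad_{H^\vee}\circ\varphi_{\pi_H},\psi)|}=\frac{\dim\rho\,\vol(I_H)}{\vol(K)}
\]
after accounting for the component-group ratio. To do this, decompose $\mathrm{Lie}(G^\vee)$ under $\varphi_\pi$ into the part $\mathrm{Lie}(H^\vee)$ and the root spaces on which the inertial character is nontrivial. The latter contribute only through $\epsilon$-factors, i.e.\ Artin conductors of characters $\alpha^\vee\circ\chi$ of the tori $\mathrm{Res}_{E_\alpha/F}\mathbb{G}_m$, since their $L$-factors are trivial. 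These conductors should match the depths of $\chi$ along the roots, which enter $[K_0:K]$ and $\vol(K)$.

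\textbf{Main obstacle.} We expect the hardest step to be Step 3 in the quasi-split case. There the root subgroups are defined over ramified or unramified extensions $E_\alpha$, and non-reduced $BC_n$ roots occur, so the conductor of $\chi\circ\alpha^\vee$ must be related to the filtration jumps of $U_\alpha$. Our first approach is to reduce to rank-one groups $\mathrm{Res}\,\SL_2$ and $\mathrm{Res}\,\SU_3$, computing the volume index and the $\epsilon$-factor explicitly there. We would then assemble the global identity root by root, using the local Langlands correspondence for tori and the compatibility of conductors with restriction of scalars.
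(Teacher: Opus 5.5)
Your outline has the same architecture as the paper: type, Hecke algebra, Clifford theory down to an unramified $\mathcal J$ with $\mathcal J^{\vee}=Z_{G^{\vee}}(\varphi_\chi(I_F))^{\circ}$, the known unipotent case, then a comparison of volumes with $\varepsilon$-factors. However, the two points where the quasi-split case really differs from the split one are not handled.

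\textbf{The type and its Hecke algebra.} Step 1 is asserted rather than proved, and it is where the paper's main new work lies. Building the type \emph{as in Roche, adapted} and then computing $\mathcal H(G,K,\rho)$ by hand is exactly what the paper says does not carry over directly.

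The paper takes $J_f=\langle T^0,U_{\alpha,f(\alpha)}\rangle$, with $f$ built from the conductors $c_\alpha$ of $\chi\circ\alpha^{\vee}$ on $F_\alpha^{\times}$; concavity of $f$ uses norm maps $T(F_\alpha)\to T$ and $p\neq 2,3$. It proves the type property through Blondel's criterion, i.e.\ injectivity of $V^{\chi_0}\to (V_{U^+})^{\chi}$ (Proposition~\ref{Blondel2}). Blondel's Condition~4 fails exactly when the root being modified has $c_\alpha=0$. So the paper instead propagates the non-vanishing \eqref{implication} along a gallery in a negative cone, using Lemma~\ref{Lemma} and the rank-one groups $\langle U_\alpha,T,U_{-\alpha}\rangle$. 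This is where the $\SL_2$/$\SU_3$ computations you reserve for Step 3 actually enter.

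Moreover, the paper never computes $\mathcal H(G,J_f,\chi_0)$. It uses $\mathcal H(J_f,\chi_0)\cong\End_G(\Pi_{\mathfrak s})$ (Bushnell--Kutzko, Dat), so Solleveld's isomorphism with $\mathcal H(\mathfrak s^{\vee},q_F^{1/2})$, and hence compatibility with his correspondence, is automatic. A hand-computed isomorphism would additionally have to be shown to agree with his.

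In either case, your Step 2 also needs both of Lusztig's reductions to commute with induction from $\mathcal H(\mathfrak s^{\vee},q_F^{1/2})^{\circ}$ to $\mathcal H(\mathfrak s^{\vee},q_F^{1/2})^{\circ}\rtimes\Gamma_{\mathfrak s^{\vee}}$ (Section~\ref{Reductions}). That is what makes Clifford theory on the Hecke side match Clifford theory on enhancements in Lemma~\ref{Lemma1}.

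\textbf{The bookkeeping in Step 3.} Your accounting fails for ramified $G$, i.e.\ when $I_F$ acts non-trivially on $\mathfrak g^{\vee}$. The complement of $\Lie(H^{\vee})=(\mathfrak g^{\vee})^{\varphi_\chi(I_F)}$ is not only the root spaces on which $\chi\circ\alpha^{\vee}$ is ramified. It also contains $\mathfrak t^{\vee}/(\mathfrak t^{\vee})^{I_F}$. When $F_\alpha/F$ is ramified, it also contains the part of $\mathfrak g^{\vee}_{\alpha^{\vee}}$ moved by inertia, even if $c_\alpha=0$.

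These pieces contribute $q_F^{a(\mathfrak t^{\vee})/2}$ and the discriminant factors $q_F^{f_{F_\alpha/F}\val(\mathcal D_{F_\alpha/F})}$ to $\abs{\varepsilon(0,\Ad_{G^{\vee}}\circ\varphi_\chi,\psi)}$. These factors do not depend on $\chi$, so they cannot be matched with depths of $\chi$ or with $[K_0:K]$. Already for $\chi=1$ on such a group, $\abs{\varepsilon}=q_F^{a(\mathfrak g^{\vee})/2}\neq 1$, although every $c_\alpha$ vanishes.

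In the paper these factors cancel only because the Haar measure is normalized as $\mu_{G,\psi}=q_F^{-a(\mathfrak g^{\vee})/2}\abs{\omega_G}$. Then $\vol(I)$ carries $q_F^{-a(\mathfrak g^{\vee})/2}$, while $\vol(I_{\mathcal J})$ does not. Two further ingredients complete the comparison:
\begin{itemize}
\item the difference $\dim(G^{\vee})^{I_F}-\dim J^{\circ}$ in the Iwahori volumes, which is the source of the $1$ in $f_{F_\alpha/F}(1+c_\alpha)$;
\item the index formula $\vol(I)/\vol(J_f)=\prod_{\alpha\in R(\mathcal G,\mathcal S)^+}q_F^{f_{F_\alpha/F}c_\alpha}$.
\end{itemize}
Together these give Lemma~\ref{Lemma2}.

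Finally, the $\varepsilon$-factors are not obtained by a rank-one reduction. The paper applies inductivity in degree $0$ to $\mathfrak g^{\vee}_{\tilde\alpha^{\vee}}-\triv$, uses the Herbrand-function formula for Artin conductors, and uses norm compatibility to identify the ramification jump with $c_\alpha$ (Lemma~\ref{EpsilonLemma}).
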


    In \cite{quasisplitps}, a local Langlands correspondence for principal series representations of quasi-split groups is defined. The correspondence goes through an equivalence of affine Hecke algebras. After fixing a Bernstein block $\mathfrak{s}=[T,\chi]_G,$ the algebra of endomorphisms of a progenerator of the Bernstein block $\Rep(G)^{\mathfrak{s}}$ is matched with the affine Hecke algebra $\mathcal{H}(\mathfrak{s}^{\vee},q_F^{1/2})$ attached to a Bernstein component $\mathfrak{s}^{\vee}$ of $L$-parameters. The Bernstein decomposition of $L$-parameters and the affine Hecke algebras associated with one of these Bernstein components were defined by Aubert, Moussaoui, and Solleveld \cite{AffineHeckeForLparameters}. In the same paper, modules over $\mathcal{H}(\mathfrak{s}^{\vee},q_F^{1/2})$ are then matched with $L$-parameters. 
    
    Checking how the formal degree changes when moving from $\Rep(G)$ to the category of modules over $\mathcal{H}(\mathfrak{s}^{\vee},q_F^{1/2})$ is harder for quasi-split groups than it is for split groups. The main problem is the lack of a type for the Bernstein component corresponding to any principal series of a quasi-split group. The construction of types for principal series of split groups done by Roche \cite{RochePrincipal} does not generalize to quasi-split groups straightforwardly. Types for quasi-split groups were known only in specific cases, like $U_n$ \cite{MyauchiStevens} or depth-0 representations \cite{morris1999level}. In Section \ref{TypeSection}, we construct an explicit type for the Bernstein component associated with any principal series representation of a quasi-split $p$-adic group, under a mild assumption on $p$.

   After constructing suitable types for principal series representations, the computation of the formal degree proceeds along lines very similar to the split case \cite[Theorem 4.7]{Ricci}. To a discrete series representation $\pi\in\Rep(G)^{\mathfrak{s}},$ we associate a unipotent representation $\pi_J$ of a quasi-split group $\mathcal{J}(F)$ satisfying
   $$\mathcal{J}^{\vee}=Z_{G^{\vee}}(\varphi_\chi(I_F))^{\circ},$$ 
   where $I_F$ denotes the inertia subgroup of the absolute Galois group of $F$. Since the formal degree conjecture is known for unipotent representations \cite{UnipotentFormalDegree}, we have 
   $$\fdeg(\pi_J)=\frac{\dim(\rho_{\pi_J})}{\abs{S^{\sharp}_{\varphi_{\pi_J}}}}\cdot \abs{\gamma(0,\Ad_{\mathcal{J}^{\vee}}\circ\varphi_{\pi_J},\psi)},$$ 
   where $(\varphi_{\pi_J},\rho_{\pi_J})$ is the enhanced $L$-parameter of $\mathcal{J}$ associated to $\pi_J$. 
   The proof of Conjecture \ref{HII} then boils down to a comparison between $\fdeg(\pi)$ and $\fdeg(\pi_J)$, together with of the dimensions of the enhancements, the component groups and the $\gamma$-factors of $\varphi_\pi$ and $\varphi_{\pi_J}.$

   We compare $\fdeg(\pi)$ and $\fdeg(\pi_J)$  by relating $\mathcal{H}(\mathfrak{s}^{\vee},q_F^{1/2})$ and $\mathcal{H}(\mathfrak{s}^{\vee}_J,q_F^{1/2}),$ where $\mathfrak{s}^{\vee}_J$ is an inertial equivalence class of $L$-parameters of $\mathcal{J}$ such that $\varphi_{\pi_J}\in\Phi(\mathcal{J})^{\mathfrak{s}^{\vee}_J}.$ We have $$\mathcal{H}(\mathfrak{s}^{\vee},q_F^{1/2}) \cong \mathcal{H}(\mathfrak{s}^{\vee}_J,q_F^{1/2})\rtimes\Gamma_{\mathfrak{s}^{\vee}}$$ for some finite group $\Gamma_{\mathfrak{s}^{\vee}}$, and the difference between $\fdeg(\pi)$ and $\fdeg(\pi_J)$ then consists only of a constant term, involving the volumes of certain types, together with an additional term arising from the fact that $\pi$ is obtained from $\pi_J$ via some induction procedure.
   
   On the other hand, we verify that the difference between the $\gamma$-factors of $\varphi_\pi$ and $\varphi_{\pi_J}$ is precisely the term corresponding to the volumes of these types, and the difference between the component groups and the dimension of the enhancements accounts for the remaining term.

    In the split case, modules over the Hecke algebra are classified by the so called Kazhdan–Lusztig triples \cite{PrinicipalDisconencted}, and the reduction can be carried out directly by construction. For quasi-split groups, the algebra $\mathcal{H}(\mathfrak{s}^{\vee},q_F^{1/2})$ may have unequal parameters. As a result, the classification of its modules requires a two-step reduction to graded Hecke algebras. This reduction was established by Lusztig \cite{AffineHeckeGradedVersionLusztig} for affine Hecke algebras, and later extended by Solleveld \cite[\S 2]{OnClassificationHeckeAlgebrasUnequalPar}, building on work of Opdam \cite[\S 4]{OnSpectralDecomposition}, to the extended case. To carry out our reduction to unipotent representations, in Section \ref{Reductions} we verify the compatibility of Lusztig’s procedure with induction from an affine Hecke algebra to an extended affine Hecke algebra.

   Section \ref{EpsilonSec} is dedicated to computing the absolute value of the $\varepsilon$-factors involved in the formal degree conjecture, using its inductivity in degree 0. Finally, in Section \ref{FD} we carry out the reduction to unipotent representations described above, and we prove Theorem \ref{FirstTheorem}.

    \paragraph{Acknowledgments}I would like to express my sincere gratitude to my supervisor and promoter, Prof. Maarten Solleveld, for suggesting this problem and for his guidance and support throughout the preparation of this paper. I am also grateful to Prof. Jessica Fintzen for pointing out a mistake in an earlier version of this work and for her helpful suggestions that improved the exposition.

\input{LusReduction}

\input{Type}

\input{Parameters}
\input{Gamma}

\input{FD}

\bibliographystyle{alpha}
\nocite{*}
\bibliography{Bibliography}

\end{document}

%% file: LusReduction.tex
\section{Lusztig's reduction and compatibility with finite induction}\label{Reductions}

In this section, we recall Lusztig's classification of irreducible modules of affine Hecke algebras \cite{AffineHeckeGradedVersionLusztig} in the form of \cite[\S 2]{OnClassificationHeckeAlgebrasUnequalPar} going through the reduction to graded Hecke algebras. We start by recalling some general information about (extended) affine Hecke algebras. This information can be found in many references, but we will follow \cite[\S 1]{OnClassificationHeckeAlgebrasUnequalPar} and keep their notation for the most part.

Let $\mathcal{R}=(X,R_0,Y,R_0^{\vee}, F_0)$ be any based root datum and denote by $W_0$ the Weyl group of $R_0.$ Let $$W=X\rtimes W_0\text{ and } W^{\aff}=\mathbb{Z}R_0\rtimes W_0.$$ and let $S^{\aff}$ be the set of affine simple reflections in $W^{\aff}.$ We usually denote by $t_x$ the translation in $W$ corresponding to $x\in X.$ Let $q:S^{\aff} \to \mathbb{C}^{\times}$ be such that 
$$q(s)=q(s') \text{ whenever } s \text{ and }s'\text{ are }W\text{-conjugate}.$$ 
We fix $q^{1/2}$ a square root of $q.$ We define the affine Hecke algebra $\mathcal{H}=\mathcal{H}(\mathcal{R},q)$ as the unique associative complex algebra with basis $\{N_w\mid w\in W\}$ and multiplication rules: \begin{itemize}
    \item $N_wN_v = N_{wv}$ if $l(wv)=l(w) + l(v)$,
    \item $(N_s-q(s)^{1/2})(N_s + q(s)^{1/2})=0$ if $s\in S^{\aff}.$ 
\end{itemize}

If $q(s)$ is always a power of some positive number $\mathbf{q}\in\mathbb{R}_{>0}$, $\mathcal{H}$ is sometimes denoted by $\mathcal{H}(\mathcal{R},\lambda,\lambda^*,\mathbf{q})$ with $$q(s_\alpha)=\mathbf{q}^{\lambda(\alpha)}\text{ and }q(s_\alpha')=\mathbf{q}^{\lambda^*(\alpha)}.$$

These algebras have been extensively studied, particularly because of their connection with representations of $p$-adic groups. For an introduction to affine Hecke algebras, we refer the reader to the survey paper \cite{SurveyHecke}. We will primarily be interested in an extended version of these algebras. Let $\Gamma$ a finite group of diagram automorphisms of $(R_0,F_0)$ such that $q(s_{\alpha})=q(s_{\gamma(\alpha)})$ for every $\gamma\in\Gamma$ for every $\alpha\in R_{\nr}:=R\cup\{2\alpha: \alpha^{\vee}\in 2Y\}.$ Then $\Gamma$ acts on $\mathcal{H}$ via isomorphisms as $$\gamma\cdot N_w=N_{\gamma(w)}.$$

and we form the crossed product algebra $\mathcal{H}\rtimes \Gamma.$

Denote by $T$ the complex torus $$T=\Hom_{\mathbb{Z}}(X,\mathbb{C}^{\times})=Y\otimes_{\mathbb{Z}} \mathbb{C}^{\times}.$$ Then we have $$Z(\mathcal{H})=\mathcal{O}(T)^{W_0}\text{ and }Z(\mathcal{H}\rtimes \Gamma)=\mathcal{O}(T)^{W_0\rtimes \Gamma},$$ where $\mathcal{O}(T)$ denotes the ring of regular functions on $T$ and $Z(A)$ denotes the center of $A.$  The complex torus $T$ has a polar decomposition $$T=T_{\rs}\times T_{\un}=\Hom_{\mathbb{Z}}(X,\mathbb{R}_{> 0})\times\Hom_{\mathbb{Z}}(X,S^1),$$ where $T_{\rs}$ is called the real split part and $T_{\un}$ is called the unitary part.

In Section \ref{ReductionSection} we will see that the study of modules over these algebras can be reduced to the study of modules over graded Hecke algebras. The latter are defined as a degeneration of affine Hecke algebras. Let $\mathfrak{a}=Y\otimes_{\mathbb{Z}}\mathbb{R}$ and $\mathfrak{a}^*=X\otimes_{\mathbb{Z}}\mathbb{R}$, and consider the degenerate root datum $$\tilde{\mathcal{R}}=(\mathfrak{a}, R_0,\mathfrak{a}^*,R^{\vee}_0,F_0).$$ Let $k:F_0\to \mathbb{C}$ be such that $k(\alpha)=k(\beta)$ if $\alpha$ and $\beta$ are $W_0$-conjugate. Then we can form the graded Hecke algebra $$\mathbb{H}=\mathbb{H}(\tilde{\mathcal{R}},k)=S(\mathfrak{t}^*)\otimes \mathbb{C}[W_0]$$ where $\mathfrak{t}^*=X\otimes_\mathbb{Z}\mathbb{C}$ and the multiplication is defined as follows: \begin{itemize}
    \item $S(\mathfrak{t}^*)$ and $\mathbb{C}[W_0]$ are embedded as subalgebras;
    \item For $x\in \mathfrak{t}^*$ and $s_\alpha\in S$ we have the cross relation  $$x\cdot s_\alpha-s_\alpha\cdot s_\alpha(x)=k(\alpha)\bigl\langle x,\alpha^{\vee}\bigl\rangle.$$
\end{itemize}

Again, if $\Gamma$ is a group of diagram automorphisms of $\tilde{\mathcal{R}}$ and $k(\alpha)=k(\gamma\cdot\alpha)$  for all $\alpha\in R_0$ and for all $\gamma\in \Gamma$, then $\Gamma$ acts on $\mathbb{H}$ via $$\gamma (x\cdot s_{\alpha})=\gamma(x)\cdot s_{\gamma(\alpha)}\text{ for all }x\in\mathfrak{t}^*,\alpha\in F_0,\gamma\in\Gamma.$$ 

The representations of specific graded Hecke algebras arising from $p$-adic groups were constructed geometrically by Lusztig \cite{CuspLocal1}. This construction was later extended to extended graded Hecke algebras in \cite{GradDisc}. We will come back to this later.

\subsection{Lusztig reductions}\label{ReductionSection}

We now explain Lusztig’s reduction from affine Hecke algebras to graded Hecke algebras. This was originally carried out in \cite{AffineHeckeGradedVersionLusztig}, and later generalized to affine Hecke algebras by Solleveld \cite[\S 2]{OnClassificationHeckeAlgebrasUnequalPar} building on an earlier work by Opdam \cite[\S 4]{OnSpectralDecomposition}. Lusztig considers a maximal ideal $I$ of $Z(\mathcal{H})$ and the $I$-adic completion of $\mathcal{H}$, in order to study representations of $\mathcal{H}$ that have central character corresponding to $I.$ In this way, there is no restriction on the central character and this could potentially lead to complications. This issue is addressed by the variation of Opdam, which replaces the central character with a neighborhood of it. Solleveld generalized all of this to extended affine Hecke algebras \cite[\S 2]{OnClassificationHeckeAlgebrasUnequalPar}, and we follow his work.

Let $t=uc\in T=T_{\un}T_{\rs}$ and consider the root system 
$$R_t=\{\alpha\in R_0: s_{\alpha}(t)=t\}.$$

The set of positive roots $R_0^+$ of $R_0$ defines a set of positive roots $R_t^+=R_0^+\cap R_t$ of $R_t.$ Let $F_t$ be the unique basis of $R_t$ contained in $R_0$ and consider the based root datum 
$$\mathcal{R}_t:=(X, R_t, Y, R_t^{\vee},F_t).$$

Consider the group $$W_{F_t,t}':=\{w\in W_0\rtimes \Gamma: w(t)=t,w(F_t)=F_t\}.$$ This is the complement of $W(R_t)$ in the isotropy group of $t$ in $W_0\rtimes \Gamma,$ meaning 
$$W_t':=(W_0\rtimes \Gamma)_t=W(R_t)\rtimes W_{F_t,t}'.$$
Notice that $F_t$ may not be contained in $F_0,$ so $\mathcal{H}_t:=\mathcal{H}(\mathcal{R}_t,q)$ is not necessarily a parabolic subalgebra of $\mathcal{H}.$

We now introduce the necessary analytic localization. Let $U\subset T$ be a non-empty $W_0\rtimes\Gamma$-invariant open subset, and let $C^{\an}(U)$ be the algebra of holomorphic functions on $U.$ Then we have a natural embedding 
$$Z(\mathcal{H}\rtimes\Gamma)\hookrightarrow C^{\an}(U)^{W_0\rtimes \Gamma},$$
and an algebra isomorphism $$C^{\an}(U)\cong \mathcal{O}(T)\otimes_{\mathcal{O}(T)^{W_0\rtimes\Gamma}}C^{\an}(U)^{W_0\rtimes\Gamma}.$$ So, we can construct the algebra $$\mathcal{H}^{\an}(U)\rtimes \Gamma:= C^{\an}(U)^{W_0\rtimes \Gamma}\otimes_{Z(\mathcal{H}\rtimes \Gamma)} (\mathcal{H}\rtimes \Gamma).$$

This algebra is introduced since, by \cite[Proposition 4.3]{OnSpectralDecomposition}, for any $U\subset T,$ the category $\Mod_{f,U}(\mathcal{H}\rtimes\Gamma)$ of finite dimensional left $\mathcal{H}\rtimes \Gamma$-modules whose $\mathcal{O}(T)$-weights lie in $U$ is naturally equivalent to $\Mod_f(\mathcal{H}^{\an}(U)\rtimes\Gamma)$, the category of left finite dimensional $\mathcal{H}^{\an}(U)\rtimes\Gamma$-modules.

To study $\mathcal{H}$-representations with central character $(W_0\rtimes\Gamma)t$, a well-behaved $(W_0\rtimes\Gamma)_u$-stable $U_u\subset T$ of $t$ is constructed in \cite[\S 2.1]{OnClassificationHeckeAlgebrasUnequalPar}. Let $U=(W_0\rtimes \Gamma)U_u.$ Then, the relevant algebras for us are going to be $\mathcal{H}^{\an}(U)\rtimes \Gamma$ and $\mathcal{H}_u^{\an}(U_u)\rtimes W_{F_u,u}':=\mathcal{H}(R_u,q_u)^{\an}(U_u)\rtimes W_{F_u,u}'.$

\begin{theorem}\label{Reduction1}\cite[Theorem 2.1.2(c)]{OnClassificationHeckeAlgebrasUnequalPar},\cite[Theorem 4.10]{OnSpectralDecomposition},\cite[Theorem 8.6]{AffineHeckeGradedVersionLusztig} 
There exists an explicit idempotent $e\in C^{\an}(U)^{W_0\rtimes\Gamma}$ and an isomorphism of $C^{\an}(U)^{W_0\rtimes \Gamma}$-algebras $$e(\mathcal{H}^{\an}(U)\rtimes\Gamma)e\cong \mathcal{H}_u^{\an}(U_u)\rtimes W_{F_u,u}',$$ which induces an isomorphism
$$\mathcal{H}^{\an}(U)\rtimes \Gamma\cong M_{[W_0\rtimes\Gamma; (W_0\rtimes\gamma)_u ]}[\mathcal{H}_u^{\an}(U_u)\rtimes W_{F_u,u}'],
$$
where $M_n[A]$ denotes the algebra of $n\times n$ matrices with coefficients in $A.$ 
This induces an equivalences of categories: 
$$\Mod_{f,U}(\mathcal{H}(R,q)\rtimes\Gamma) \xrightarrow[]{L_1} \Mod_{f,U_u}(\mathcal{H}_u\rtimes W_{F_u,u}').$$
\end{theorem}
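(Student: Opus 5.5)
The plan follows Lusztig's first reduction in the form of Opdam and Solleveld: build a matrix-unit decomposition of the localized algebra from the disjoint pieces of $U$.

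\textbf{Choice of neighbourhood and idempotents.} I will choose $U_u$ small enough that $wU_u\cap U_u=\emptyset$ for $w\in W_0\rtimes\Gamma$ outside the isotropy group $(W_0\rtimes\Gamma)_u$, and $wU_u=U_u$ for $w$ inside it. Then $U=\bigsqcup_{w}wU_u$, with $w$ running over representatives of $(W_0\rtimes\Gamma)/(W_0\rtimes\Gamma)_u$. For each such coset, let $1_{wU_u}\in C^{\an}(U)$ be the indicator function of $wU_u$. These functions are holomorphic because the pieces are disjoint open sets, and they are orthogonal idempotents summing to $1$. The idempotent $e$ of the statement is $1_{U_u}$. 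Strictly speaking this is not $W_0\rtimes\Gamma$-invariant; what matters is that it lies in $C^{\an}(U)\subset\mathcal{H}^{\an}(U)\rtimes\Gamma$.

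\textbf{Matrix-unit structure.} I will use the Bernstein presentation, in which $\mathcal{H}^{\an}(U)\rtimes\Gamma$ is generated by $C^{\an}(U)$ together with $N_w$ for $w\in W_0$ and $\Gamma$. The cross relations are not exact conjugation, so I will replace them by normalized intertwiners
\[
\imath_{s_\alpha}=\frac{q(s_\alpha)^{1/2}\bigl(N_{s_\alpha}+q(s_\alpha)^{-1/2}\bigr)}{c_\alpha}-1 ,
\]
where $c_\alpha$ is the Harish-Chandra $c$-function. These satisfy $\imath_w f\imath_w^{-1}=w(f)$ for $f\in C^{\an}(U)$, and they are invertible wherever the $c_\alpha$ have no zeros or poles. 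The role of $U_u$ is exactly to guarantee this: on $wU_u$ the relevant $c$-functions are nonvanishing and holomorphic for the roots that move $U_u$, that is, for $\alpha\notin R_u$.

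For $w$ a minimal-length coset representative, set $E_{w,1}=1_{wU_u}\imath_w 1_{U_u}$ and $E_{1,w}=1_{U_u}\imath_w^{-1}1_{wU_u}$. These give matrix units, so
\[
\mathcal{H}^{\an}(U)\rtimes\Gamma\cong M_n\bigl(e(\mathcal{H}^{\an}(U)\rtimes\Gamma)e\bigr),\qquad n=[W_0\rtimes\Gamma:(W_0\rtimes\Gamma)_u].
\]

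\textbf{Identifying the corner (main obstacle).} It remains to identify $e(\mathcal{H}^{\an}(U)\rtimes\Gamma)e$ with $\mathcal{H}_u^{\an}(U_u)\rtimes W'_{F_u,u}$. The corner algebra is spanned over $C^{\an}(U_u)$ by $e\imath_w e$ with $w\in(W_0\rtimes\Gamma)_u=W(R_u)\rtimes W'_{F_u,u}$.
\begin{itemize}
\item For $w\in W'_{F_u,u}$, the $\imath_w$ restricted to $U_u$ multiply like the group elements, with no cocycle, because their $c$-function factors are invertible there.
\item For $w\in W(R_u)$, I must produce elements satisfying the quadratic and Bernstein relations of $\mathcal{H}(\mathcal{R}_u,q_u)$, with parameters $q_u$ read off from $q$ through $c_\alpha$ for $\alpha\in R_u$. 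The difficulty is that simple roots of $R_u$ need not be simple in $R_0$. So I will define generators $\tilde N_{s_\alpha}$ for $\alpha\in F_u$ by writing $\imath_{s_\alpha}$ in terms of $c_\alpha$, and then re-inserting a renormalized $c$-function via the formula above with $q_u$. The nonvanishing of the $c_\beta$ for $\beta\notin R_u$ on $U_u$ ensures that the factors from non-$R_u$ roots are units and can be absorbed.
\end{itemize}
The main obstacle is checking the braid relations and the quadratic relation for these $\tilde N_{s_\alpha}$, which include the case of non-reduced roots $2\alpha$ with unequal labels. Here I follow Lusztig's computation \cite[\S 8]{AffineHeckeGradedVersionLusztig}, and for the $\Gamma$-extension I need only that $W'_{F_u,u}$ permutes $F_u$ and preserves $q_u$.

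\textbf{Equivalence of categories.} The map $L_1$ follows by Morita equivalence, $V\mapsto eV$, since a matrix algebra over $A$ is Morita equivalent to $A$. Combining this with \cite[Proposition 4.3]{OnSpectralDecomposition} on both sides identifies finite-dimensional modules with weights in $U$, respectively in $U_u$.
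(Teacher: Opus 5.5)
Your outline is the standard argument of \cite[\S 8]{AffineHeckeGradedVersionLusztig}, \cite[\S 4]{OnSpectralDecomposition} and \cite[\S 2.1]{OnClassificationHeckeAlgebrasUnequalPar}, which the paper only cites. It uses indicator idempotents of the disjoint translates $wU_u$, matrix units $\imath_w1_{U_u}$ for representatives with $w(R_u^+)\subset R_0^+$ (so the relevant $c$-functions are units on $U_u$), an identification of the corner, and then Morita equivalence together with \cite[Proposition 4.3]{OnSpectralDecomposition}. Your remark on $e$ is correct: $e=1_{U_u}$ lies in $C^{\an}(U)$, not in $C^{\an}(U)^{W_0\rtimes\Gamma}$. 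An invariant idempotent would be central and could not give the matrix decomposition in general, so the statement's wording is a slip. The matrix-unit step is fine.

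The step that fails as written is the identification of the corner. You claim that $e(\mathcal{H}^{\an}(U)\rtimes\Gamma)e$ is spanned over $C^{\an}(U_u)$ by the $e\imath_we$ with $w\in(W_0\rtimes\Gamma)_u$. This is false in exactly the interesting cases. For $\alpha\in R_u$ the function $c_\alpha$ may vanish on $U_u$ (e.g.\ $u=1$ and $W_0t$ the central character of the Steinberg module). Then $\imath_{s_\alpha}$ has a pole there, so $e\imath_{s_\alpha}e$ is not even in the analytic algebra. Consequently, after building the $\tilde N_{s_\alpha}$ you have at most a homomorphism $\Psi:\mathcal{H}_u^{\an}(U_u)\rtimes W'_{F_u,u}\to e(\mathcal{H}^{\an}(U)\rtimes\Gamma)e$, and nothing in your plan shows it is onto.

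On the other hand, what you call the main obstacle is formal. After extending scalars to meromorphic functions, both algebras become the crossed product $C^{\mathrm{me}}(U_u)\rtimes(W_0\rtimes\Gamma)_u$, via $f\mapsto ef$ and $\imath^u_w\mapsto e\imath_we$. This map is multiplicative because normalized intertwiners satisfy $\imath_v\imath_w=\imath_{vw}$; that identity, not invertibility of $c$-functions, is why no cocycle appears. So once $q_u$ is chosen with $c^u_\alpha=c_\alpha$ for $\alpha\in R_u$, your $\tilde N_{s_\alpha}$ are just the images of the $N^u_{s_\alpha}$. The braid and quadratic relations, and injectivity, then come for free.

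What must actually be shown is that this meromorphic isomorphism matches the analytic subalgebras, in two checks.

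\emph{Holomorphy.} For $\Psi(N^u_{s_\alpha})$ with $\alpha\in F_u$, write $s_\alpha=xs_\gamma x^{-1}$ with $\gamma\in F_0$, $x\gamma=\alpha$ and $\ell(s_\alpha)=2\ell(x)+1$. Since $\alpha$ is simple in $R_u$, the inversion set of $x^{-1}$ misses $R_u$, so $\imath_{x^{-1}}1_{U_u}$ is invertible in the analytic algebra. The pole of $\imath_{s_\gamma}$ is then cancelled by $c_\gamma=x^{-1}(c_\alpha)$.

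\emph{Surjectivity.} Show that $\bigoplus_{v,w}E_{v,1}\Psi(\cdot)E_{1,w}$ contains $C^{\an}(U)$, $\Gamma$, and every $N_{s_\beta}1_{wU_u}$ with $\beta\in F_0$. If $s_\beta$ moves $wU_u$, then $c_\beta$ is a unit on $wU_u$ and $E_{1,w'}\imath_{s_\beta}E_{w,1}=e\imath_ye$ with $y\in W'_{F_u,u}$, where $w'$ represents $s_\beta w(W_0\rtimes\Gamma)_u$. If $s_\beta$ fixes $wU_u$, then $w^{-1}\beta\in R_u^+$ is simple in $R_u$, because $w(R_u^+)\subset R_0^+$ and $\beta$ is simple in $R_0$. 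In that case $E_{1,w}N_{s_\beta}E_{w,1}=\Psi(N^u_{s_{w^{-1}\beta}})$.

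With these two checks added, your argument is complete.
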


To reduce our study to the case of graded Hecke algebras, we use a notion of analytic localization for graded Hecke algebras similar to the one for affine Hecke algebras. Namely, let $\mathbb{H}\rtimes \Gamma$ be an extended graded Hecke algebra associated with $\widetilde{\mathcal{R}},$ and let $V\subset \mathfrak{t}$ a $W_0\rtimes \Gamma$-invariant open subset. Then we can form the algebra 
$$\mathbb{H}^{\an}(V)\rtimes \Gamma:=C^{\an}(V)^{W_0\rtimes \Gamma}\otimes_{Z(\mathbb{H}\rtimes \Gamma)}(\mathbb{H}\rtimes \Gamma),$$
which has analogous properties to the analytic localization for affine Hecke algebras. Define a parameter function $k_u$ for the degenerate root datum $\tilde{\mathcal{R}_u}=(\mathfrak{a},R_u,\mathfrak{a}^*,R_u^{\vee},F_u)$ by 
$$k_u(\alpha)=(\log q(s_\alpha) +\alpha(u)\log q(t_\alpha s_\alpha))/2.$$ 
Let $u\in T_{\un}$ and consider the exponential map $\exp_u:\mathfrak{t}\to T$ for $T$ based at $u,$ which sends $y\mapsto u\exp(y).$

\begin{theorem}\label{Reduction2}\cite[Theorem 2.1.4]{OnClassificationHeckeAlgebrasUnequalPar},\cite[Theorem 9.3]{AffineHeckeGradedVersionLusztig} There is an isomorphism of algebras $$\Phi_u:\mathcal{H}^{\an}(\exp_u(V))\to \mathbb{H}(\tilde{\mathcal{R}}_u, ku)^{\an}(V),$$ 
which extends to an isomorphism 
$$\Phi_u:\mathcal{H}^{\an}(\exp_u(V))\rtimes\Gamma\to \mathbb{H}(\tilde{\mathcal{R}}_u, ku)^{\an}(V)\rtimes\Gamma.$$ 
This gives an equivalence of categories $$\Mod_{f, W_u't}(\mathcal{H}(\mathcal{R}_u,q_u)\rtimes\Gamma)\xrightarrow[]{L_2}\Mod_{(W(R_u)\rtimes W_{F_u,u}')\log(c)}(\mathbb{H}(\tilde{\mathcal{R}}, k_u)\rtimes W_{F_u,u}').$$
\end{theorem}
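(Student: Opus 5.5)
The statement to prove is Theorem \ref{Reduction2}. It is a cited result, so the plan is to reconstruct Lusztig's argument in Solleveld's analytic formulation. I would first build the isomorphism for the non-extended algebra $\mathcal{H}$, then check that it is $\Gamma$-equivariant. Throughout I take $V\subset\mathfrak{t}$ to be a small enough $W(R_u)\rtimes W'_{F_u,u}$-stable open neighbourhood of $\log(c)$, as in \cite[\S 2.1]{OnClassificationHeckeAlgebrasUnequalPar}. Concretely, I require that $\exp_u$ is injective on $V$, that $\exp_u(V)$ meets its $W_0\rtimes\Gamma$-translates only through the isotropy group $W'_u$, and that the functions $\alpha\circ\exp_u$ and $1\pm\alpha\circ \exp_u$ have no unwanted zeros on $V$.

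\textbf{Step 1: the commutative parts.} The map $\exp_u$ induces an isomorphism $C^{\an}(\exp_u(V))\cong C^{\an}(V)$. It is equivariant for the stabiliser $W'_u$, because elements of $W'_u$ fix $u$, so $w(u\exp(y))=u\exp(w y)$.

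\textbf{Step 2: intertwiner presentations.} I would present both localized algebras by Lusztig's normalized intertwiners. For a simple root $\alpha\in F_u$, I would define $\tau_{s_\alpha}$ in $\mathcal{H}^{\an}$ by the usual formula $(N_{s_\alpha}+\text{const})\cdot g_\alpha^{-1}$. Here $g_\alpha$ is the rational function involving $q(s_\alpha)$, $q(t_\alpha s_\alpha)$ and $\theta_{-\alpha}$, the element of the commutative subalgebra $\mathcal{O}(T)\subset\mathcal{H}$ attached to $-\alpha\in X$. I would define $\tilde\tau_{s_\alpha}$ in $\mathbb{H}^{\an}$ in the same way from $s_\alpha$, with denominator involving $k_u(\alpha)$ and $\alpha$. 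These elements satisfy the braid relations and $\tau_{s_\alpha}f\tau_{s_\alpha}^{-1}=s_\alpha(f)$.

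For roots in $R_u$ that are not simple in $R_0$, $N_{s_\alpha}$ is not available in $\mathcal{H}$. In that case $\tau_{s_\alpha}$ must be defined by conjugating a simple intertwiner by a product of intertwiners of simple reflections of $R_0$. These are invertible on $\exp_u(V)$ because $V$ avoids the relevant singular loci. This is where $R_t\ne R_0\cap$(parabolic) causes trouble.

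\textbf{Step 3: define $\Phi_u$ and check it.} Set $\Phi_u(f)=f\circ\exp_u$ and $\Phi_u(\tau_w)=\tilde\tau_w$ for $w\in W(R_u)$. Then show that this respects the cross relations. This reduces to a rank-one computation. The key point is that $g_\alpha\circ\exp_u$ divided by the graded analogue is holomorphic and invertible on $V$, exactly when $k_u(\alpha)=(\log q(s_\alpha)+\alpha(u)\log q(t_\alpha s_\alpha))/2$. The sign $\alpha(u)=\pm1$ accounts for the two parameters in type $C^{(1)}$/$BC$ situations.

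\textbf{Step 4: comparing bases, extension, equivalence.} Both algebras are free over the localized centre with bases given by the intertwiners $\tau_w$, $w\in W(R_u)$. Combined with Theorem \ref{Reduction1} to pass from $W_0$ to $W(R_u)$, this makes $\Phi_u$ bijective.

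For the extension, let $\gamma\in W'_{F_u,u}$, respectively $\Gamma$ as the statement is phrased. I send $\gamma\mapsto\gamma$. Compatibility holds because $\gamma$ fixes $u$, commutes with $\exp_u$, and permutes $F_u$ preserving $q$ and hence $k_u$.

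Finally, the equivalence $L_2$ follows from the analytic-localization equivalence \cite[Proposition 4.3]{OnSpectralDecomposition} on both sides. Weights near $W'_u t=W'_u uc$ correspond under $\exp_u^{-1}$ to weights near $W'_u\log(c)$.

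The \textbf{main obstacle} is Step 3, the rank-one verification together with the holomorphic invertibility of the correction factors. This is especially delicate for non-reduced roots, where $\alpha^\vee\in 2Y$ and the two parameters interact. There I would first reduce to a rank-one root datum of type $A_1$ or $\tilde{C}_1$ and compute directly, as Lusztig does in \cite[\S 9]{AffineHeckeGradedVersionLusztig}.
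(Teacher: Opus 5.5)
The paper does not prove this statement; it quotes Lusztig's second reduction in Solleveld's analytic form. Your skeleton matches that argument. The commutative parts are identified via $\exp_u$, intertwiners are compared, and the ratio $h_\alpha=(g_\alpha\circ\exp_u)/\tilde g_\alpha$ of affine and graded $c$-functions is a holomorphic unit on $V$; that is where $k_u$ and the sign $\alpha(u)=\pm1$ enter. Your treatment of the $\Gamma$-extension and of the passage to module categories is also fine.

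The genuine gap is in Steps 2--4, which put the normalized intertwiners in the holomorphic algebras. In the meromorphic localization, any $\tau$ with $\tau f\tau^{-1}=s_\alpha(f)$ and $\tau^2=1$ equals the unnormalized intertwiner $s_\alpha\alpha-k_u(\alpha)$ times a function $\phi$. That intertwiner has square $k_u(\alpha)^2-\alpha^2$, so $\phi\, s_\alpha(\phi)=((k_u(\alpha)-\alpha)(k_u(\alpha)+\alpha))^{-1}$.

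\emph{Poles.} Since $V$ is $s_\alpha$-stable, once $V$ meets $\{\alpha=\pm k_u(\alpha)\}$ no such $\tau$ lies in $\mathbb{H}(\tilde{\mathcal{R}}_u,k_u)^{\an}(V)$, and likewise on the affine side. This is exactly the situation for residual central characters carrying discrete series (e.g.\ Steinberg), which is the case this paper needs.

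\emph{Failure to span.} If instead $V$ meets a wall $\{\alpha=0\}$ (e.g.\ $c=1$) and $k_u(\alpha)\neq0$, holomorphic normalized intertwiners exist. But then $\phi$ is holomorphic, so their coefficient on $s_\alpha$, namely $\alpha\phi$, vanishes on the wall. Hence their $C^{\an}(V)$-span does not contain $s_\alpha$.

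So neither the definition $\Phi_u(\tau_w)=\tilde\tau_w$ on the holomorphic algebras nor the basis claim of Step 4 is available.

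The repair is to use intertwiners only where their normalizing factors are invertible (meromorphic or fraction-field coefficients). There, matching $\tau_w\mapsto\tilde\tau_w$ and checking the braid and cross relations is harmless. Then compute the image of an actual generator: $\Phi_u(N_{s_\alpha})=s_\alpha h_\alpha+f_\alpha$, where $h_\alpha$ is your ratio up to a nonzero constant and $f_\alpha$ is built from $h_\alpha$ and constants. Holomorphy and invertibility of $h_\alpha$ on $V$ is exactly what makes $\Phi_u$ map $\mathcal{H}^{\an}(\exp_u(V))$ into $\mathbb{H}(\tilde{\mathcal{R}}_u,k_u)^{\an}(V)$. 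Moreover $\Phi_u(N_w)$ is $w$ times a unit of $C^{\an}(V)$, plus terms in $v\,C^{\an}(V)$ with $v<w$. So bijectivity follows by triangularity with respect to the standard bases $\{N_w\}$ and $\{w\}$ (or by writing the inverse with $h_\alpha^{-1}$), not from intertwiner bases.

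Separately, the detour in Step 2 about roots of $R_u$ not simple in $R_0$ comes from misreading the setting. The theorem is applied to the output $\mathcal{H}(\mathcal{R}_u,q_u)\rtimes W'_{F_u,u}$ of Theorem~\ref{Reduction1}. Its finite Weyl group $W(R_u)$ fixes $u$ and its basis is $F_u$, so $N_{s_\alpha}$ exists for every $\alpha\in F_u$. No conjugation is needed, and Step 4 needs no further appeal to Theorem~\ref{Reduction1}.

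In a localization of the original $\mathcal{H}$ the conjugation trick would not work anyway, for three reasons. The domain has to be $W_0\rtimes\Gamma$-stable. The $F_0$-intertwiners can be singular on other translates $w\exp_u(V)$ when $w^{-1}\gamma\in R_u$. And what such elements generate is the corner algebra of Theorem~\ref{Reduction1}, not the graded algebra.
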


Graded Hecke algebras are much easier to study than affine Hecke algebras. The modules over some specific extended graded Hecke algebras of geometric nature were studied in \cite{GradDisc}, and the classification can be interpreted in terms of $L$-parameters. Let $M$ be any (possibly disconnected) complex reductive group. Let $L$ be a Levi subgroup of $M^{\circ}$ and let $v\in\Lie(L)$ be nilpotent. Let $\mathcal{C}_v^L$ be the adjoint orbit of $v$ and let $\mathcal{L}$ be an irreducible $L$-equivariant cuspidal local system on $\mathcal{C}_v^L$. As defined in \cite{GeneralizationSpringer} we call $(L,\mathcal{C}_v^L,\mathcal{L})$ a cuspidal support for $M$. Let $\mathbb{H}(M,L,\mathcal{L})$ be the twisted graded Hecke algebra defined in \cite[Proposition 2.2]{GradDisc} associated to such a cuspidal support. Then, modules over this graded Hecke algebras were classified:

\begin{theorem}\label{discclas}\cite[Theorem 3.20]{GradDisc}  There exists a canonical bijection between the set $\Irr(\mathbb{H}(M,L,\mathcal{L}))$ and the set of $M$-conjugacy classes of triples $(y,\sigma,\rho)$ where $y\in \Lie(M^{\circ})$ is nilpotent, $\sigma\in \Lie(M^{\circ})$ is semisimple such that $[\sigma,y]=0,$ $\rho\in \pi_0(Z_M(\sigma,y))$ and $(Z_M(\sigma), y,\rho)$ has cuspidal support $(L,\mathcal{C}_v^L,\mathcal{L})$ up to $M$-conjugation.    
\end{theorem}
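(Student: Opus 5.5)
The plan is to reduce to the connected case, where the classification is Lusztig's geometric one, and then pass to the disconnected group $M$ by Clifford theory. First I would recall the structure of the twisted graded Hecke algebra from \cite[Proposition 2.2]{GradDisc}. Write $N_M(L,\mathcal{L})$ for the stabiliser in $N_M(L)$ of the isomorphism class of $\mathcal{L}$, and set
$$\mathcal{R}_{\mathcal{L}}:=N_M(L,\mathcal{L})/N_{M^{\circ}}(L,\mathcal{L}).$$
Then
$$\mathbb{H}(M,L,\mathcal{L})\cong \mathbb{H}(M^{\circ},L,\mathcal{L})\rtimes \mathbb{C}[\mathcal{R}_{\mathcal{L}},\natural]$$
for a $2$-cocycle $\natural$. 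This cocycle comes from the failure of the $N_M(L,\mathcal{L})$-action on $\mathcal{L}$ to lift to an honest group action. For the connected algebra $\mathbb{H}(M^{\circ},L,\mathcal{L})$ I would invoke Lusztig's construction \cite{CuspLocal1}. He realises standard modules $E_{y,\sigma,\rho^{\circ}}$ as $\rho^{\circ}$-isotypic parts of the $\sigma$-specialised equivariant homology of the variety $\mathcal{P}_y$ of parabolic-type flags attached to $(L,\mathcal{C}_v^L,\mathcal{L})$. Each such module has a unique irreducible quotient, and this gives a bijection between $\Irr(\mathbb{H}(M^{\circ},L,\mathcal{L}))$ and the $M^{\circ}$-classes of triples $(y,\sigma,\rho^{\circ})$ with $\rho^{\circ}\in\Irr\pi_0(Z_{M^{\circ}}(\sigma,y))$ of the prescribed cuspidal support.

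Next I would redo the geometric construction with $M$ in place of $M^{\circ}$. The group $Z_M(\sigma,y)$ acts on the homology of $\mathcal{P}_y$, where the action of the non-identity components is twisted through $\mathcal{R}_{\mathcal{L}}$ and the projective action on $\mathcal{L}$. This makes the specialised homology a module over $\mathbb{H}(M,L,\mathcal{L})\times \mathbb{C}[\pi_0(Z_M(\sigma,y)),\natural']$, and the isotypic component for $\rho$ gives standard modules $E_{y,\sigma,\rho}$. To identify their irreducible quotients I would use Clifford theory for the crossed product. Take an irreducible $\mathbb{H}(M^{\circ},L,\mathcal{L})$-module $\pi^{\circ}$ attached to $(y,\sigma,\rho^{\circ})$. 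Twisting $\pi^{\circ}$ by $r\in\mathcal{R}_{\mathcal{L}}$ corresponds to conjugating $(y,\sigma,\rho^{\circ})$ by a representative in $N_M(L,\mathcal{L})$. So the stabiliser of $\pi^{\circ}$ in $\mathcal{R}_{\mathcal{L}}$ matches the stabiliser of $\rho^{\circ}$ in $\pi_0(Z_M(\sigma,y))/\pi_0(Z_{M^{\circ}}(\sigma,y))$. The irreducible $\mathbb{H}(M,L,\mathcal{L})$-modules over $\pi^{\circ}$ are then parametrised by irreducible projective representations of this stabiliser, with a certain cocycle. On the other side, the irreducible $\rho\in\Irr\pi_0(Z_M(\sigma,y))$ lying over $\rho^{\circ}$ are parametrised by the same stabiliser with the cocycle coming from the extension
$$1\to\pi_0(Z_{M^{\circ}}(\sigma,y))\to\pi_0(Z_M(\sigma,y))\to \text{(image in }\pi_0(M))\to 1.$$
I would then check that the $M$-orbits of triples $(y,\sigma,\rho)$ correspond exactly to the pairs consisting of an $\mathcal{R}_{\mathcal{L}}$-orbit of $\pi^{\circ}$ and such a projective representation. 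The cuspidal-support condition is preserved because it is $M$-equivariant.

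The main obstacle is the cocycle comparison. One has to show that the $2$-cocycle governing extensions of $\pi^{\circ}$ to $\mathbb{H}(M,L,\mathcal{L})$ agrees with the one governing extensions of $\rho^{\circ}$ to $\pi_0(Z_M(\sigma,y))$; after that, the two Clifford-theoretic parametrisations coincide. I would try to prove this geometrically rather than by computation. The idea is to choose the intertwiners on both sides from the same action of $N_M(L,\mathcal{L})\cap Z_M(\sigma,y)$ on $H_*(\mathcal{P}_y,\dot{\mathcal{L}})$. This makes the cocycles equal by construction, and canonicity of the bijection then follows because no choices beyond this action were made.
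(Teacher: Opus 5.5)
Your overall strategy is the route of \cite{GradDisc}: Lusztig's geometric classification for $\mathbb{H}(M^{\circ},L,\mathcal{L})$, then Clifford theory on both sides, matched through one geometric module. The paper itself gives no proof; it cites \cite[Theorem 3.20]{GradDisc} and later recalls the Clifford-theoretic input \cite[Lemmas 3.13 and 3.16]{GradDisc}.

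\textbf{The cocycles are inverse, not equal.} The step you single out as the crux cannot be proved as you formulate it. Take $s$ in the common stabiliser, and let $\Psi_s$ be the geometric operator on the specialised homology: left translation by a lift $\gamma_s\in Z_M(\sigma,y)$, composed with the element $N_s\in\mathbb{C}[\mathcal{R}_{\mathcal{L}},\natural]$ of the same component. It preserves the $\rho^{\circ}$-isotypic part and normalises both the $\pi_0(Z_{M^{\circ}}(\sigma,y))$-action and the $\mathbb{H}(M^{\circ},L,\mathcal{L})$-action. So on $\rho^{\circ}\otimes E_{y,\sigma,\rho^{\circ}}$ it factors as $J_s\otimes I_s$, where $J_s$ is an intertwiner $\rho^{\circ}\to{}^{s}\rho^{\circ}$ and $I_s$ is one for the $\mathbb{H}(M^{\circ},L,\mathcal{L})$-module. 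The cocycles of $J$ and $I$ therefore multiply to the cocycle of $s\mapsto\Psi_s$. Since left translation is an honest action, that cocycle is $\natural$, up to the usual bookkeeping with $\pi_0(Z_{M^{\circ}}(\sigma,y))$. Consequently:
\begin{itemize}
\item if $\kappa$ is the obstruction to extending $\rho^{\circ}$ to its stabiliser, the obstruction to extending $\pi^{\circ}$ to a module of $\mathbb{H}(M^{\circ},L,\mathcal{L})\rtimes\mathbb{C}[\mathcal{R}_{\mathcal{L},\pi^{\circ}},\natural]$ is $\kappa^{-1}$;
\item $\rho=\ind(\tau\otimes\rho^{\circ})$, with $\tau$ a $\kappa^{-1}$-projective representation, corresponds to $\ind(\tau^{*}\otimes\pi^{\circ})$, not to $\ind(\tau\otimes\pi^{\circ})$.
\end{itemize}
This is exactly the $\tau\mapsto\tau^{*}$ in the lemma the paper recalls right after the compatibility lemma. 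An equality of cocycles fails whenever $[\kappa]^2\neq 1$. Once you insert the dual, your argument goes through and the bijection stays canonical.

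\textbf{There should be no cocycle $\natural'$ on the component-group side.} Take $\mathcal{P}_y$ inside $M/P$, using all components of $M$. The local system $\dot{\mathcal{L}}$ is induced from $P\subset M^{\circ}$, so it is honestly $M$-equivariant. Hence $Z_M(\sigma,y)$ acts honestly by left translation, commuting with $\mathbb{H}(M,L,\mathcal{L})$. In that algebra $\mathcal{R}_{\mathcal{L}}$ acts by right translation composed with chosen isomorphisms $\Ad(n)^*\mathcal{L}\cong\mathcal{L}$. The projectivity of $N_M(L,\mathcal{L})$ on $\mathcal{L}$ therefore appears only as $\natural$ in the algebra. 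With a nontrivial $\natural'$, the isotypic parts would be indexed by projective representations, not by the $\rho\in\Irr(\pi_0(Z_M(\sigma,y)))$ of the statement.

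What you describe, non-identity components acting through $\mathcal{R}_{\mathcal{L}}$ on homology over $M^{\circ}/P$, is the combined operator $\Psi_s$, not the action of $\pi_0(Z_M(\sigma,y))$. Moreover, components of $Z_M(\sigma,y)$ not meeting $M^{\circ}N_M(L,\mathcal{L})$ cannot act there at all, because they move the $M^{\circ}$-class of the cuspidal support. This is where the cuspidal-support hypothesis is actually needed. It shows that the stabiliser of $\rho^{\circ}$ in $\pi_0(Z_M(\sigma,y))/\pi_0(Z_{M^{\circ}}(\sigma,y))$ lies in the image of $\mathcal{R}_{\mathcal{L}}$ in $\pi_0(M)$. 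That is the identification of stabilisers your Clifford argument relies on, cf.\ \cite[Lemma 3.12]{GradDisc}.
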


\subsection{Compatibility with finite induction}

We now explain why the maps from Theorem \ref{Reduction1}, \ref{Reduction2} and \ref{discclas} are compatible with finite induction from an affine Hecke algebra to an extended affine Hecke algebra. We will denote the category of finite dimensional left modules over some algebra $A$ as $A\text{-}\Mod$. Then we check that the following lemma holds: 

\begin{lemma}
    Lusztig's reduction process on extended affine Hecke algebras is compatible with induction on $\Gamma.$ In other words, the following diagram commutes

\begin{equation}\label{Diagram1}
\begin{tikzcd}
	{\mathcal{H}\textnormal{-}\Mod} & {\mathcal{H}(\mathcal{R}_u,q_u)\rtimes W_{F_u,u}\textnormal{-}\Mod} & {\mathbb{H}_u\rtimes W_{F_u,u}\textnormal{-}\Mod} \\
	{\mathcal{H}\rtimes \Gamma\textnormal{-}\Mod} & {\mathcal{H}(\mathcal{R}_u,q_u)\rtimes W_{F_u,u}'\textnormal{-}\Mod} & {\mathbb{H}_u\rtimes W_{F_u,u}'\textnormal{-}\Mod}
	\arrow["{L_1}", squiggly, from=1-1, to=1-2]
	\arrow["\Ind", from=1-1, to=2-1]
	\arrow["{L_2}", squiggly, from=1-2, to=1-3]
	\arrow["\Ind", from=1-2, to=2-2]
	\arrow["\Ind", from=1-3, to=2-3]
	\arrow["{L_1}", squiggly, from=2-1, to=2-2]
	\arrow["{L_2}", squiggly, from=2-2, to=2-3]
\end{tikzcd}
\end{equation}
    where $L_1$ and $L_2$ denote the two steps of Lusztig's reduction process obtained from Theorem \ref{Reduction1} and Theorem \ref{Reduction2} respectively, and $W_{F_u,u}$ denotes the set of elements in $W_0$ which fix $u$ and stabilize $F_u.$ 
\end{lemma}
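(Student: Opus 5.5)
We treat the two squares of \eqref{Diagram1} separately. In each square the horizontal functors come from algebra isomorphisms (Theorem \ref{Reduction2}), or from an idempotent truncation followed by an isomorphism (Theorem \ref{Reduction1}). The vertical functors are $(\mathcal{A}\rtimes\Gamma)\otimes_{\mathcal{A}}-$ for the relevant algebra $\mathcal{A}$. So it suffices to show two things: the structural maps are $\Gamma$-equivariant, and they restrict to the corresponding maps for the subgroup $\{1\}$ or $W_{F_u,u}$. Then ``restriction to the smaller crossed product'' and ``induction'' commute with them formally. Throughout we work with modules whose weights lie in $U=(W_0\rtimes\Gamma)U_u$. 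The left-hand induction preserves this condition because $U$ is $\Gamma$-stable. We then replace $\mathcal{H}$-$\Mod$ by $\mathcal{H}^{\an}(U)$-$\Mod$ via \cite[Proposition 4.3]{OnSpectralDecomposition}. Induction also commutes with analytic localization, since $\mathcal{H}^{\an}(U)\rtimes\Gamma=\mathcal{H}^{\an}(U)\otimes_{\mathcal{H}}(\mathcal{H}\rtimes\Gamma)$.

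\emph{Second square.} We would check from the construction in \cite[Theorem 2.1.4]{OnClassificationHeckeAlgebrasUnequalPar} that the extension of $\Phi_u$ to the $W_{F_u,u}'$-crossed product is obtained by setting $\Phi_u(\gamma)=\gamma$. Its restriction to $\mathcal{H}(\mathcal{R}_u,q_u)^{\an}\rtimes W_{F_u,u}$ is then the corresponding isomorphism for the subgroup. Given this, for a module $M$ we have
$$\Phi_u^*\bigl((\mathbb{H}_u^{\an}\rtimes W'_{F_u,u})\otimes_{\mathbb{H}_u^{\an}\rtimes W_{F_u,u}}M\bigr)\cong(\mathcal{H}_u^{\an}\rtimes W'_{F_u,u})\otimes_{\mathcal{H}_u^{\an}\rtimes W_{F_u,u}}\Phi_u^*M.$$
This is just transport of structure along a compatible pair of isomorphisms of the form (algebra, subalgebra).

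\emph{First square.} Write $A=\mathcal{H}^{\an}(U)$, $B=A\rtimes\Gamma$, and $e$ for the idempotent of Theorem \ref{Reduction1}. Here $e$ is the characteristic function of $U_u$, so $e\in C^{\an}(U)$; it is built from the $W_0\rtimes\Gamma$-orbit decomposition. The functor $L_1$ is $M\mapsto eM$, viewed as a module over $eBe\cong\mathcal{H}_u^{\an}(U_u)\rtimes W'_{F_u,u}$. The corresponding functor for $\mathcal{H}$ uses $eAe\cong\mathcal{H}_u^{\an}(U_u)\rtimes W_{F_u,u}$. This holds because $(W_0)_u=W(R_u)\rtimes W_{F_u,u}$, and the matrix structure in Theorem \ref{Reduction1} for $\Gamma=1$ is indexed by $W_0/(W_0)_u$. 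We must show
$$e(B\otimes_A M)\cong eBe\otimes_{eAe}eM.$$
The plan is as follows. Decompose $B=\bigoplus_{\gamma}A\gamma$, and choose coset representatives $\gamma$ of $\Gamma$ modulo the stabilizer in $\Gamma$ of $W_0U_u$. Then $e\gamma e$ is nonzero exactly for those $\gamma$ that can be moved by some $w\in W_0$ so that they fix $U_u$. This identifies $eBe=\bigoplus eAe\cdot\tilde\gamma$ with $\tilde\gamma$ ranging over $W'_{F_u,u}/W_{F_u,u}$. Since $A$ is Morita equivalent to $eAe$ via $Ae$ (Theorem \ref{Reduction1} with $\Gamma=1$), we can compute $eB\otimes_A M$ as $eBe\otimes_{eAe} eAe\otimes_{eAe}eM$, using $eB\otimes_A Ae\cong eBe$. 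The step $eB\otimes_A Ae\cong eBe$ follows from $AeA=A$.

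The main obstacle is the bookkeeping in the first square. Specifically, we must verify that the isomorphism $eBe\cong\mathcal{H}_u^{\an}\rtimes W'_{F_u,u}$ of \cite{OnClassificationHeckeAlgebrasUnequalPar} really restricts on $eAe$ to the $\Gamma=1$ isomorphism. Elements of $W'_{F_u,u}$ are in general products $w\gamma$ with $w\in W_0$ nontrivial, so their images are not simply $\gamma$; they involve the intertwiners $\imath_w^\circ$ used by Solleveld/Opdam. I would first check that these normalized intertwiners, for $w\in W_0$, lie in $eAe$. I would then check that their products with $\gamma$ coincide with the elements used in the extended construction. Once this compatibility of the explicit generators is established, both squares commute by the formal argument above.
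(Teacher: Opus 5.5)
Your proposal is correct and rests on the same principle as the paper's proof. Each horizontal functor comes from an isomorphism or idempotent truncation that is compatible with the inclusions $\mathcal{H}^{\an}(U)\subset\mathcal{H}^{\an}(U)\rtimes\Gamma$ and $W_{F_u,u}\subset W'_{F_u,u}$, so it commutes with induction.

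The difference is in how the bookkeeping is done.
\begin{itemize}
\item The paper passes through a chain of squares involving the matrix algebras $M_{[W_0:(W_0)_u]}(\cdot)$ and $M_{[W_0\rtimes\Gamma:(W_0\rtimes\Gamma)_u]}(\cdot)$ and the Morita equivalence $M_n[A]\sim A$. Its auxiliary diagram only treats matrix algebras of the same size on both rows, but in the relevant square the sizes differ. It also asserts without comment that the isomorphism for $W'_{F_u,u}$ restricts to the one for $W_{F_u,u}$.
\item You work directly with $e=1_{U_u}$ and the identity $e(B\otimes_A M)\cong eBe\otimes_{eAe}eM$. This avoids coset representatives and the size mismatch. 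It also isolates the only non-formal input: that the isomorphism $eBe\cong\mathcal{H}_u^{\an}(U_u)\rtimes W'_{F_u,u}$ restricts on $eAe$ to the one for $W_{F_u,u}$, and that $\Phi_u(\gamma)=\gamma$.
\end{itemize}
That input does hold. Solleveld's embeddings are given by formulas uniform in the normalized intertwiners $\imath^\circ_w$ for $w\in W_0\rtimes\Gamma$, with $\imath^\circ_\gamma=\gamma$ for $\gamma\in\Gamma$. Moreover, a $U_u$ chosen for $W_0\rtimes\Gamma$ also works for $\Gamma=1$.

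There are a few small repairs to make.
\begin{itemize}
\item $AeA=A$ is false for $A=\mathcal{H}^{\an}(U)$ with $U=(W_0\rtimes\Gamma)U_u$ when $\Gamma$ does not stabilize $W_0U_u$. In that case $\mathcal{H}^{\an}(U)$ is a product over the disjoint translates $\gamma W_0U_u$, and $e$ is full only in the factor $\mathcal{H}^{\an}(W_0U_u)$. This is harmless: the top-row $L_1$ is only defined for $M$ with weights in $W_0U_u$, and for such $M$ one has $M\cong Ae\otimes_{eAe}eM$.
\item The isomorphism $eB\otimes_A Ae\cong eBe$ needs no fullness at all. It follows because $B=\bigoplus_{\gamma\in\Gamma}\gamma A$ is free as a right $A$-module.
\item It is $eA\gamma e$, not $e\gamma e$, that is nonzero exactly when $\gamma U_u\subset W_0U_u$.
\end{itemize}
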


\begin{proof}

Zooming in on the left square, we have

\begin{equation}\label{Square1}
\begin{tikzcd}[column sep=small]
	{\mathcal{H}\text{-}\Mod} & {\mathcal{H}^{\an}(U)\text{-}\Mod} & {M_{[W_0:W_u]}(e\mathcal{H}^{\an}(U)e)\text{-}\Mod} \\
	{\mathcal{H}\rtimes\Gamma\text{-}\Mod} & {\mathcal{H}^{\an}(U)\rtimes\Gamma\text{-}\Mod} & {M_{[W_0\rtimes \Gamma: (W_0\rtimes \Gamma)_u]}(e(\mathcal{H}^{\an}(U)\rtimes \Gamma )e)\text{-}\Mod}
	\arrow[squiggly, from=1-1, to=1-2]
	\arrow["\Ind", from=1-1, to=2-1]
	\arrow["\cong", from=1-2, to=1-3]
	\arrow["\Ind", from=1-2, to=2-2]
	\arrow["\Ind", from=1-3, to=2-3]
	\arrow[squiggly, from=2-1, to=2-2]
	\arrow["\cong", from=2-2, to=2-3]
\end{tikzcd}
\end{equation}

\begin{equation}\label{Square2}
\begin{tikzcd}[column sep=small]
	{M_{[W_0:W_u]}(e\mathcal{H}^{\an}(U)e)\text{-}\Mod} & {M_{[W_0: W_u]}(\mathcal{H}_u^{\an}(U_u)\rtimes W_{F_u,u})\text{-}\Mod} \\
	{M_{[W_0\rtimes \Gamma: (W_0\rtimes \Gamma)_u]}(e(\mathcal{H}^{\an}(U)\rtimes \Gamma )e)\text{-}\Mod} & {M_{[W_0\rtimes \Gamma: (W_0\rtimes \Gamma)_u]}(\mathcal{H}_u^{\an}(U_u)\rtimes W_{F_u,u}')\text{-}\Mod}
	\arrow["\cong", from=1-1, to=1-2]
	\arrow["\Ind", from=1-1, to=2-1]
	\arrow["\Ind", from=1-2, to=2-2]
	\arrow["\cong", from=2-1, to=2-2]
\end{tikzcd}
\end{equation}

\begin{equation}\label{Square3}    
\begin{tikzcd}[column sep=small]
	{M_{[W_0: W_u]}(\mathcal{H}_u^{\an}(U_u)\rtimes W_{F_u,u})\text{-}\Mod} & {\mathcal{H}_u^{\an}(U_u)\rtimes W_{F_u,u}\text{-}\Mod} & \\
	{M_{[W_0\rtimes \Gamma: (W_0\rtimes \Gamma)_u]}(\mathcal{H}_u^{\an}(U_u)\rtimes W_{F_u,u}')\text{-}\Mod} & {\mathcal{H}_u^{\an}(U_u)\rtimes W_{F_u,u}'\text{-}\Mod} & 
	\arrow[dashed, from=1-1, to=1-2]
	\arrow["\Ind", from=1-1, to=2-1]
	\arrow["\Ind", from=1-2, to=2-2]
	\arrow[dashed, from=2-1, to=2-2]
\end{tikzcd}
\end{equation}

\begin{equation}\label{Square4}
    \begin{tikzcd}
	{\mathcal{H}_u^{\an}(U_u)\rtimes W_{F_u,u}} & {\mathcal{H}_u\rtimes W_{F_u,u}} \\
	{\mathcal{H}_u^{\an}(U_u)\rtimes W_{F_u,u}'} & {\mathcal{H}_u\rtimes W_{F_u,u}'}
	\arrow[squiggly, from=1-1, to=1-2]
	\arrow["\Ind", from=1-1, to=2-1]
	\arrow["\Ind", from=1-2, to=2-2]
	\arrow[squiggly, from=2-1, to=2-2]
\end{tikzcd}
\end{equation}

The arrows $\rightsquigarrow$ denote the maps from $\Mod_{f,U}(\mathcal{H}\rtimes\Gamma)$ to $\Mod_f(\mathcal{H}^{\an}(U)\rtimes\Gamma),$ and the maps $\dashrightarrow$ denote the Morita equivalence between $M_n[A]$ and $A$ for every algebra $A.$

The left square of \ref{Square1} commutes since the Morita equivalence from Theorem \ref{Reduction1} just extends the action from $\mathcal{H}$ to $\mathcal{H}^{\an}(U).$ The right square of \ref{Square1} and the square \ref{Square2} commute since the vertical arrows are all inductions from one algebra to another up to isomorphism. 

In \ref{Square3}, the square commutes since the Morita equivalence between $M_n[A]$ and $A$ is compatible with group actions. More generally, whenever there is an inclusion of algebras $A\hookrightarrow B,$ it is clear that for every $n,d>0$ the following diagram commutes: 
\[\begin{tikzcd}
	{M_d(A)\text{-}\Mod} & {A\text{-}\Mod} & {M_n(A)\text{-}\Mod} \\
	{M_d(B)\text{-}\Mod} & {B\text{-}\Mod} & {M_n(B)\text{-}\Mod}
	\arrow[dashed, from=1-1, to=1-2]
	\arrow["\Ind", from=1-1, to=2-1]
	\arrow["\Ind", from=1-2, to=2-2]
	\arrow[dashed, from=1-3, to=1-2]
	\arrow["\Ind", from=1-3, to=2-3]
	\arrow[dashed, from=2-1, to=2-2]
	\arrow[dashed, from=2-3, to=2-2]
\end{tikzcd}\]

Finally,  \ref{Square4} commutes for the same reason as the left square of \ref{Square1}.

One can check that the arguments for the right square of \ref{Diagram1} are similar, and we conclude. \qedhere \end{proof}

On the other hand, Theorem \ref{discclas} is built on \cite[Lemma 3.13 and Lemma 3.16]{GradDisc}, which has already established compatibility with induction. In fact, the following is proved: 

\begin{lemma}\cite[Lemma 3.13 and Lemma 3.16]{GradDisc} We keep the set up of Theorem \ref{discclas}. Let $\rho^\circ\in \Irr(\pi_0(Z_{M^{\circ}}(\sigma,y)))$ and let $\pi_{\rho^{\circ}}$ be the associated $\mathbb{H}(M^{\circ},L,\mathcal{L})$-module. Then, there is a bijection between the set

$$\{\rho\in\Irr(\pi_0(Z_M(\sigma,y)))\mid \rho|_{\pi_0(Z_{M^{\circ}}(\sigma,y))}\text{ contains }\rho^{\circ}\}$$
and the set $$\{\pi_\rho\in\Irr(\mathbb{H}(M,L,\mathcal{L})\mid \pi_\rho|_{\mathbb{H}(M^{\circ},L,\mathcal{L})}\text{ contains }\pi_{\rho^{\circ}}\}.$$

The map from the first set to the second is the following: let $\pi_0(Z_M(\sigma,y))_{\rho^\circ}$ be the stabilizer of $\rho^{\circ}$ in $\pi_0(Z_M(\sigma,y))$ and let $\tau\in \Irr(\pi_0(Z_M(\sigma,y))_{\rho^\circ}).$ Then, $\tau$ is a representation of the stabilizer $\mathbb{H}(M,L,\mathcal{L})_{\pi_{\rho^{\circ}}}$ of $\pi_{\rho^{\circ}}$ in $\mathbb{H}(M,L,\mathcal{L})$, and the map is $$\ind_{\pi_0(Z_M(\sigma,y))_{\rho^\circ}}^{\pi_0(Z_M(\sigma,y))}(\tau\otimes\rho^{\circ})\mapsto \ind_{\mathbb{H}(M,L,\mathcal{L})_{\pi_{\rho^{\circ}}}}^{\mathbb{H}(M,L,\mathcal{L})}(\tau^*\otimes \pi_{\rho^{\circ}}),$$
where $\tau^*$ is the dual of $\tau.$
    
\end{lemma}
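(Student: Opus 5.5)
The statement is the Clifford-theoretic compatibility between enhancements and $\mathbb{H}(M,L,\mathcal{L})$-modules of \cite[Lemma 3.13 and Lemma 3.16]{GradDisc}. I would prove it by parallel Clifford theory on both sides, glued by an equivariance property of the geometric construction.

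The setup is the finite group $\mathcal{R}:=M/M^{\circ}$ (more precisely its subgroup stabilizing the cuspidal support). By \cite[Proposition 2.2]{GradDisc}, $\mathbb{H}(M,L,\mathcal{L})$ is a twisted crossed product $\mathbb{H}(M^{\circ},L,\mathcal{L})\rtimes\mathbb{C}[\mathcal{R},\natural]$ with a 2-cocycle $\natural$. On the enhancement side, $\pi_0(Z_{M^{\circ}}(\sigma,y))$ is normal in $\pi_0(Z_M(\sigma,y))$, and the quotient embeds into $\mathcal{R}$.

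The key steps are as follows.

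\textbf{Step 1 (standard modules).} Realize $\pi_{\rho^{\circ}}$ as the $\rho^{\circ}$-isotypic part of the equivariant homology $H_*^{Z_{M^{\circ}}(\sigma,y)}(\mathcal{P}_y,\dot{\mathcal{L}})$ of the relevant variety of parabolics, specialized at $\sigma$. Then form the analogous $M$-module by inducing this homology along $\mathcal{R}$. By construction, the $\mathcal{R}$-action permutes the $M^{\circ}$-data.

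\textbf{Step 2 (equivariance).} Show that for $g\in Z_M(\sigma,y)$, the twist of $\pi_{\rho^{\circ}}$ by the image of $g$ in $\mathbb{H}(M,L,\mathcal{L})$ is $\pi_{g\cdot\rho^{\circ}}$. It follows that the stabilizer $\mathbb{H}(M,L,\mathcal{L})_{\pi_{\rho^{\circ}}}$ corresponds to $\pi_0(Z_M(\sigma,y))_{\rho^{\circ}}$. For elements of $\mathcal{R}$ not represented in $Z_M(\sigma,y)$, I would show that they move $(\sigma,y)$ to a non-conjugate pair and hence cannot stabilize $\pi_{\rho^{\circ}}$.

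\textbf{Step 3 (intertwiners).} Identify the intertwiners. The stabilizer acts on $\pi_{\rho^{\circ}}$ projectively. Its action on the multiplicity space $\Hom_{\pi_0(Z_{M^{\circ}})}(\rho^{\circ},H_*)$ is a projective representation, and the contragredient appears because the homology realizes $\rho\otimes\pi_\rho$ inside $\Hom$. This produces the $\tau\mapsto\tau^*$ in the formula. Both twisted group algebras then carry the same (inverse) cocycle.

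\textbf{Step 4 (conclusion).} Apply Clifford theory (Mackey) on both sides. This gives irreducibility of the two induced objects and bijectivity of the map.

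The main obstacle is Step 3: matching the 2-cocycles and checking that the natural action on homology intertwines precisely with $\tau^*$ rather than $\tau$. I would first handle this by decomposing $H_*(\mathcal{P}_y,\dot{\mathcal{L}})$ as a $\pi_0(Z_M(\sigma,y))\times\mathbb{H}(M,L,\mathcal{L})$-bimodule, mimicking Lusztig's proof in the connected case.
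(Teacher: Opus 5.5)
The paper gives no proof of its own here; it only quotes \cite[Lemmas 3.13 and 3.16]{GradDisc}. Your outline is the natural route and matches the one taken there: the crossed-product structure of $\mathbb{H}(M,L,\mathcal{L})$, equivariance of the geometric construction, matching of stabilizers and of inverse 2-cocycles, and then Clifford theory. However, Step 1 contains a real error, and Step 4 rests on it.

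Let $E_{y,\sigma}$ be the equivariant homology for $M^{\circ}$, specialized at $\sigma$. Its $\rho^{\circ}$-multiplicity space $\Hom_{\pi_0(Z_{M^{\circ}}(\sigma,y))}(\rho^{\circ},E_{y,\sigma})$ is the \emph{standard} module $E_{y,\sigma,\rho^{\circ}}$. This module is in general reducible, and $\pi_{\rho^{\circ}}$ is only its unique irreducible quotient. Likewise, in Step 3 the homology contains $\rho\otimes E_{y,\sigma,\rho}$, not $\rho\otimes\pi_{\rho}$.

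So your bimodule and cocycle analysis actually yields $E_{y,\sigma,\rho}\cong\ind(\tau^{*}\otimes E_{y,\sigma,\rho^{\circ}})$. That is a statement about standard modules, and Clifford theory gives neither irreducibility nor an identification with $\pi_\rho$. The missing step is the passage to irreducible quotients:
\begin{enumerate}
\item The projective action of the stabilizer on $E_{y,\sigma,\rho^{\circ}}$ preserves its unique maximal submodule, so it descends to $\pi_{\rho^{\circ}}$.
\item Let $Q$ be any irreducible quotient of $\ind(\tau^{*}\otimes E_{y,\sigma,\rho^{\circ}})$. Frobenius reciprocity gives a nonzero map $\tau^{*}\otimes E_{y,\sigma,\rho^{\circ}}\to Q$.
\item The image of this map is semisimple over $\mathbb{H}(M^{\circ},L,\mathcal{L})$, because the restriction of the irreducible module $Q$ is semisimple (Clifford theory).
\item Hence the map kills $\tau^{*}\otimes\mathrm{rad}(E_{y,\sigma,\rho^{\circ}})$ and factors through the irreducible module $\tau^{*}\otimes\pi_{\rho^{\circ}}$.
\end{enumerate}
Therefore $\ind(\tau^{*}\otimes\pi_{\rho^{\circ}})$ is the unique irreducible quotient of $E_{y,\sigma,\rho}$, i.e.\ it is the module that Theorem~\ref{discclas} attaches to $(y,\sigma,\rho)$. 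Without this step you have matched Clifford-theoretic objects, but not the canonical parametrization the lemma is about.

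A smaller inaccuracy: $\pi_0(Z_M(\sigma,y))/\pi_0(Z_{M^{\circ}}(\sigma,y))$ does not embed into $\mathcal{R}$ in general. An element $g\in Z_M(\sigma,y)$ only preserves the $M$-conjugacy class of the cuspidal support, not its $M^{\circ}$-class. For example, take $M=(\SL_2\times\SL_2)\rtimes\mathbb{Z}/2$ with swap-invariant $(\sigma,y)$, $y$ regular in both factors, and $\rho^{\circ}=\epsilon\boxtimes\triv$, where $\epsilon$ is the cuspidal local system for $\SL_2$. Then $\mathcal{R}=1$, although the swap centralizes $(\sigma,y)$.

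Such $g$ have no image in $\mathbb{H}(M,L,\mathcal{L})$, so Step 2 as phrased does not make sense for them. This is harmless, since these elements never fix $\rho^{\circ}$. What you actually need is only
\[
\pi_0(Z_M(\sigma,y))_{\rho^{\circ}}/\pi_0(Z_{M^{\circ}}(\sigma,y))\cong\Stab_{\mathcal{R}}(\pi_{\rho^{\circ}}),
\]
which is what \cite[Lemma 3.12(b)]{GradDisc} provides (the paper invokes it in the proof of Lemma~\ref{Lemma1}). So state Step 2 for the stabilizer of $\rho^{\circ}$ only.
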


%% file: Type.tex
\section{Types for principal series}\label{TypeSection}

We now move to the realm of $p$-adic groups, and we set the notation for the remainder of this paper. Let $G=\mathcal{G}(F)$ be the $F$-points of a quasi-split connected reductive group over $F$ which splits over an extension $E/F$ with ramification index $e=e(E/F).$ Let $T=\mathcal{T}(F)\subset G$ be a maximal torus containing a maximal split torus $S=\mathcal{S}(F).$ In general, we will always denote with calligraphic letters the scheme, and with normal letters the $F$-points of the scheme. 

Let $B$ be a fixed Borel subgroup containing $T$, and let $\chi:T\to \mathbb{C}^{\times}$ be a character of $T$. We will denote by $\Rep(G)$ the category of smooth $G$-representations on complex vector spaces, and by $\Irr(G)$ the set of equivalence classes of irreducible objects in $\Rep(G)$. We will denote by $\mathcal{H}(G)$ the Hecke algebra of $G$, which is the convolution algebra of compactly supported, locally constant functions $f:G\to \mathbb{C}$. We recall that there is a well-known equivalence between $\Rep(G)$ and the category of non-degenerate modules over $\mathcal{H}(G)$. 

Let $\mathfrak{s}=[T,\chi]_G$ be the inertial equivalence class of $G$ associated with $\chi$ and let $\Irr(G)^{\mathfrak{s}}$ be the associated Bernstein component. We recall that $\Irr(G)^{\mathfrak{s}}$ consists of all irreducible subquotients of the normalized parabolic inductions $I_B^G(\chi')=\ind_B^G(\chi'\otimes\delta_B^{1/2})$ where $\chi'\in [T,\chi]_T$ and $\delta_B$ is the modulus character.  

In this section, we will construct a type for $\mathfrak{s}.$ We recall the definition of a type for $\mathfrak{s}$: 

\begin{definition}
    A type $t=(J,\sigma)$ for $\mathfrak{s}$ is a pair consisting of a compact open subgroup $J\subset G$ and an irreducible representation $\sigma$ of $J,$ satisfying the following:
    
    Let $e_t\in\mathcal{H}(G)$ be the idempotent defined by  
    $$e_t(x)=\begin{cases}\frac{\Tr(\sigma(x^{-1}))}{\vol(J)}\text{ if } x\in J;\\
    0\hspace{1.5cm}\text{ else.} 
    \end{cases}$$
    Let $\Rep(G)_t$ be the full subcategory of $\Rep(G)$ consisting of representations $(\pi, V)$ such that $\mathcal{H}(G)(e_tV)=V$, and let $\mathcal{H}_t=e_t\mathcal{H}(G)e_t$. Then the functor $\Rep(G)_t\to \mathcal{H}_t\text{-}\Mod$ given by $V\mapsto e_t(V)$ is an equivalence of categories, and $\Rep(G)^{\mathfrak{s}}=\Rep(G)_t$ are equal as full subcategories of $\Rep(G).$
\end{definition}

One of the biggest difficulties in working with principal series of quasi-split groups is that we do not have a type for them, and the machinery of \cite{RochePrincipal} does not 
generalize straightforwardly. Types for the unitary group $U_n$ for every $n$ and for any degree 2 extension $F'/F$ whenever $p\neq 2,$ were constructed using the machinery of strata developed by Miyauchi and Stevens \cite{MyauchiStevens}. These types look very similar to the ones obtained by Roche, since they are of the form 
$$J_f=\bigl\langle T^0,U_{\alpha,x,f}\bigl\rangle$$
with $f$ a concave function depending only on the depth of $\chi\circ \alpha^{\vee}$ for every absolute root $\alpha,$ and $x$ a point in the building. We recall that $f:R(\mathcal{G},\mathcal{S})\cup \{0\}\to\mathbb{R}\cup\{\infty\}$ is called concave if 
$$f(\alpha+\beta)\leq f(\alpha) +f(\beta)$$ for every $\alpha,\beta\in R(\mathcal{G},\mathcal{S})\cup\{0\},$ with $\alpha+\beta\in R(\mathcal{G},\mathcal{S})\cup \{0\}.$

We will now directly define a type $J_f$ for $[T,\chi]_G$. Let $R^+=R(\mathcal{G},\mathcal{S})^+:=\{\alpha_1,\dots,\alpha_n\}$ and $R^-=R(\mathcal{G},\mathcal{S})^-$ be the sets of positive and negative relative roots of $R=R(\mathcal{G},\mathcal{S})$ with respect to the basis $\Delta$ defined by $B$. For a root $\alpha\in R(\mathcal{G},\mathcal{S})$ we will denote by $\tilde{\alpha}$ a fixed preimage of $\alpha$ in $R(\mathcal{G},\mathcal{T})$ and by $F_{\alpha}$ the splitting field of $\tilde{\alpha}.$ We denote by 
$$\alpha^{\vee}=\prod_{\sigma\in\Gal(F_{\alpha}/F)}{}^{\sigma}\tilde{\alpha}^{\vee}:F_{\alpha}^{\times}\to T(F).$$ 
Let $c_{\alpha}$ denote the conductor of the character $\chi\circ  \alpha^{\vee}: F_{\alpha}^{\times}\to \mathbb{C}^{\times}.$ We define $f:R\cup\{0\}\to\mathbb{R}$ as the function 

\begin{equation}\label{function}
    f(\alpha):=\begin{cases}
    e^{-1}\left \lfloor \dfrac{e\cdot c_{\alpha}}{2} \right\rfloor & \text{if } \alpha \in R^+, \\[6pt]
    \max\big\{e^{-1},e^{-1}\left\lfloor \dfrac{(e\cdot c_{\alpha}) + 1}{2} \right\rfloor\big\} & \text{if } \alpha \in R^-, \\[6pt] 0 & \text{if }\alpha=0.
\end{cases}
\end{equation}

We remark that the conductor $c_\alpha$ lies in the image of the valuation, which is $\frac{1}{e}\mathbb{Z}$. That's why, in the definition of $f$ we multiply and divide by $e$, so that $e\cdot c_\alpha$ is always an integer. Moreover, we notice that if $c_\alpha>0,$ then $f(\alpha)+f(-\alpha)=c_\alpha.$ 

\begin{example}\label{ExampleU4}

    Consider the group $U_4=U_4(E/F)$ for any degree 2 extension $E/F$. Assume that the characteristic of the residue field $\mathfrak{f}$ is not 2. We have a maximal torus $T$ and a maximal split torus $S\subset T$ given by 
    $$T=\mqty[\dmat{a,b,\overline{b}^{-1},\overline{a}^{-1}}]\text{ with }a,b\in E^{\times},$$
    $$S=\mqty[\dmat{a,b,b^{-1},a^{-1}}]\text{ with }a,b\in F^{\times}.$$

   We fix any smooth character $\chi$ of $T$. Consider the relative root $\alpha: (a,b)\mapsto ab^{-1}.$ Then, its preimage under the restriction map is given by the two absolute roots $\alpha_1:(a,b)\mapsto ab^{-1}$ and $\alpha_2:(a,b)\mapsto \overline{a}\overline{b}^{-1}.$ On the other hand, the root $\beta\in R(\mathcal{G},\mathcal{S})$ given by $\beta:(a,b)\to b^{2}$ is already split.

   Then $\alpha^{\vee}: E^{\times}=F_{\alpha}^{\times}\to T$ is given by 
   
   $$x\mapsto \mqty[\dmat{x,\overline{x},\overline{x}^{-1},x^{-1}}]$$ while $\beta^{\vee}:F^{\times}\to T$ given by  
   
   $$x\mapsto \mqty[\dmat{1,x,x^{-1}, 1}].$$ For the sake of comparing the conductor of $\chi\circ\beta^{\vee}$, $\chi\circ\alpha^{\vee}$ and $\chi\circ(\alpha+\beta)^{\vee},$ we notice that we can extend $\beta^{\vee}$ to $F_{\alpha}^{\times}$ by first base changing $\beta^{\vee}$ to $F_{\alpha}^{\times}$ and then composing with the norm map $N_{T,F_\alpha/F}$ on $T(F_{\alpha}).$ Since the norm map acts as a square on the image of $\beta^{\vee},$ and since $\textnormal{char}(\mathfrak{f})\neq 2,$ the conductor of $\chi$ composed with this extension of $\beta^{\vee}$ to $F_{\alpha}^{\times}$ is the same as the conductor of $\chi\circ\beta^{\vee}.$ Comparing the various conductors now consists of the same computation as in the case of split groups \cite[Lemma 3.4]{RochePrincipal}: $$\cond (\chi\circ\alpha^{\vee})+\cond (\chi\circ\beta^{\vee})\geq \cond (\chi\circ (\alpha+\beta)^{\vee}).$$

\end{example}

\begin{lemma}
    Assume that the characteristic of the residue field $\mathfrak{f}$ of $F$ is not $2$ or $3.$ Then the function $f$ defined by \ref{function} is concave.
\end{lemma}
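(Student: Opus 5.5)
The plan is to follow Roche's argument for split groups \cite[Lemma 3.4]{RochePrincipal}. The core is a subadditivity statement for conductors: whenever $\alpha,\beta,\alpha+\beta\in R$, we want
$$c_{\alpha+\beta}\leq \max\{c_\alpha,c_\beta\}\quad\text{and, more precisely,}\quad c_{\alpha+\beta}\le c_\alpha+c_\beta .$$
Here all conductors are measured in the normalized valuation of $F$, with values in $\frac1e\mathbb{Z}$. Concavity of $f$ will then follow from this inequality by a case analysis on the signs of $\alpha$, $\beta$ and $\alpha+\beta$ and the floor/ceiling arithmetic.

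\textbf{Step 1: compare coroots on a common field.} First I compare the three coroots $\alpha^\vee$, $\beta^\vee$, $(\alpha+\beta)^\vee$ on a common field, as in Example \ref{ExampleU4}. Choose a finite Galois extension $E'/F$ containing $F_\alpha$, $F_\beta$ and $F_{\alpha+\beta}$. Compose each coroot with the norm $N_{E'/F_\gamma}$ to get cocharacters $E'^\times\to T$. Absolute coroots satisfy an integral relation $(\tilde\alpha+\tilde\beta)^\vee=a\tilde\alpha^\vee+b\tilde\beta^\vee$. After summing over Galois orbits, this becomes a relation of the form
$$m_{\alpha+\beta}\,(\alpha+\beta)^\vee\circ N = a\,m_\alpha\,\alpha^\vee\circ N+b\,m_\beta\,\beta^\vee\circ N .$$
The multiplicities $m_\gamma$ are indices of splitting fields, and the integers $a,b$ come from the possible non-reduced or non-simply-laced configurations. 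The case list is short: for relative root systems of quasi-split groups one has type $BC_n$, $G_2$ from triality, and so on. In every case these integers lie in $\{1,2,3\}$. Since $p\neq 2,3$, raising to these powers and taking norms from tamely ramified pieces does not change conductors in the relevant sense. Concretely, for a character $\theta$ and an integer $m$ prime to $p$, $\cond(\theta^m)=\cond(\theta)$ on positive-depth parts. The norm $N_{E'/F_\gamma}$ preserves conductors up to the ramification normalization when the relevant part of $E'/F_\gamma$ is tame; I will also use the fact that non-tame Galois degrees cannot arise, because the splitting fields of quasi-split groups have degree dividing $6$ over $F$. From this I obtain the inequality $c_{\alpha+\beta}\le\max\{c_\alpha,c_\beta\}$, using the ultrametric property of depth: on $U^{r}_{E'}$ a product of two characters trivial there is trivial. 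Depth-zero cases are handled separately, using the $\max\{e^{-1},\cdot\}$ in the definition on $R^-$.

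\textbf{Step 2: the case analysis.} With the conductor inequality in hand, write $f(\gamma)=\lfloor ec_\gamma/2\rfloor/e$ for $\gamma>0$ and $f(\gamma)=\max\{1/e,\lceil ec_\gamma/2\rceil/e\}$ for $\gamma<0$. The cases are as follows.
\begin{itemize}
\item If $\alpha+\beta=0$, the required bound is $f(\alpha)+f(-\alpha)\ge 0$, which is trivial.
\item If $\alpha,\beta>0$, we need $\lfloor x/2\rfloor+\lfloor y/2\rfloor\ge\lfloor z/2\rfloor$ with $z\le\max(x,y)$, which is clear.
\item If $\alpha,\beta<0$, the same argument works with ceilings; if $z=0$, the right side is $1/e$ and the left side is at least $2/e$.
\item If the signs are mixed, say $\alpha>0$, $\beta<0$, $\alpha+\beta>0$, we need $\lfloor x/2\rfloor+\max\{1,\lceil y/2\rceil\}\ge \lfloor z/2\rfloor$, which follows from $z\le\max(x,y)$.
\item If the signs are mixed with $\alpha+\beta<0$, we need $\lfloor x/2\rfloor+\max\{1,\lceil y/2\rceil\}\ge\max\{1,\lceil z/2\rceil\}$. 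This is the tight case. The strong bound $c_{\alpha+\beta}\le\max$ alone is not quite enough, so I would instead use $-\beta=\alpha+(-(\alpha+\beta))$. This gives $y\le\max(x,z)$ and by symmetry $z\le\max(x,y)$. Now split on whether $z\le y$: if so, the bound is immediate; if $z>y$, then $z\le x$ and $\lfloor x/2\rfloor+\lceil y/2\rceil\ge\lceil z/2\rceil$ as soon as $y\ge1$, while the $\max\{1,\cdot\}$ covers $y=0$.
\end{itemize}

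\textbf{Main obstacle.} I expect the hardest part to be Step 1 for non-reduced roots and for the cases where $F_\alpha\neq F_\beta$. A typical instance is the $BC_n$ case of unitary groups, where $2\alpha$ is a root and the coroot of a divisible root involves a norm. Verifying that the factors of $2$ and $3$ and the norms do not spoil the comparison is exactly where $p\neq2,3$ enters. I would handle this by explicit inspection of the rank-two quasi-split relative root systems $A_2$, $B_2=C_2$, $BC_1$, $BC_2$ and $G_2$.
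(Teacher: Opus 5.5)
\paragraph{Verdict.} Your strategy is the paper's, and it has a genuine gap in Step~1 that cannot be repaired as stated. The shared strategy is a Roche-style bound $c_{\alpha+\beta}\le\max\{c_\alpha,c_\beta\}$, obtained by comparing coroots through norms on a common field, using $p\neq 2,3$ to absorb integer multiples, followed by a floor/ceiling case check. Your Step~2 is fine. In fact $z\le\max(x,y)$ alone already suffices in your last case, because of the $\max\{1,\cdot\}$.

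\paragraph{Where Step 1 fails.} The failing claim is that a tame norm $N_{E'/F_\gamma}$ ``preserves conductors up to the ramification normalization''. With conductors measured in $\tfrac1e\mathbb{Z}$, as in the definition of $f$, this is false once $E'/F_\gamma$ is ramified. For $E/F$ ramified quadratic:
\begin{itemize}
\item $\cond(\theta\circ N_{E/F})=\cond(\theta)-\tfrac12$ when $\cond(\theta)\ge 2$;
\item a tamely ramified $\theta$ can become unramified after composing with $N_{E/F}$;
\item in the other direction, restricting to $F^\times$ a character of $E^\times$ of conductor $k+\tfrac12$ gives conductor $k+1$.
\end{itemize}
Pulling back along a norm only lowers conductors. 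So the comparison on $E'$ controls $c_{\alpha+\beta}$ only when it is $\alpha^\vee$ and $\beta^\vee$ that get pulled back. When $F_{\alpha+\beta}\subsetneq F_\alpha$ with $F_\alpha/F_{\alpha+\beta}$ ramified, up to $1-\tfrac1e$ is lost. Examples are $2\alpha$ in $BC_1$, or $2\alpha+\beta=\alpha+(\alpha+\beta)$ in the relative $C_2$ of a ramified $U_4$. Your depth-zero fallback via $\max\{e^{-1},\cdot\}$ does not help here, since it only concerns $R^-$.

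\paragraph{Counterexample.} The inequality, and even concavity itself, fail. Take $\mathcal{G}=U_4(E/F)$ with $E/F$ ramified quadratic, $T=\{t(a,b)=\mathrm{diag}(a,b,\bar b^{-1},\bar a^{-1})\}$, $\alpha=e_1-e_2$ and $\beta=2e_2$. Then $\alpha^\vee(x)=t(x,x^{-1})$ and $(\alpha+\beta)^\vee(x)=t(x,\bar x)$ for $x\in E^\times$, while $(2\alpha+\beta)^\vee(y)=t(y,1)$ for $y\in F^\times$.
\begin{itemize}
\item For $\chi(t(a,b))=\chi_1(a)$ with $\chi_1$ tamely ramified, you get $c_\alpha=c_{\alpha+\beta}=\tfrac12$ but $c_{2\alpha+\beta}=1$. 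Hence $f(2\alpha+\beta)=\tfrac12>0=f(\alpha)+f(\alpha+\beta)$.
\item For $\chi(t(a,b))=\chi_1(a)\chi_1(\bar b)^{-1}$ with $\cond(\chi_1|_{F^\times})=2$, you get $c_\alpha=\tfrac32$, $c_{\alpha+\beta}=0$ and $c_{2\alpha+\beta}=2$. Hence $f(2\alpha+\beta)=1>\tfrac12=f(\alpha)+f(\alpha+\beta)$.
\end{itemize}
The second example has positive depth, so the failure is not a depth-zero artifact.

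\paragraph{Relation to the paper.} The paper's proof passes over the same point. Its reduction to $F_\beta\subseteq F_\alpha$ and its claim that the conductor is unchanged by the norm never address $F_{\alpha+\beta}\subsetneq F_\alpha$. So the defect is not peculiar to your write-up. The lemma needs either an extra hypothesis or a modified $f$. One such hypothesis is that every $F_\alpha/F$ is unramified: then norms are onto each $U^n$, restriction does not raise conductors, and the norm/restriction issue disappears.

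\paragraph{A separate error.} Your claim that splitting fields of quasi-split groups have degree dividing $6$ holds only for absolutely simple groups. For Weil restrictions along wild extensions, you must compare over the field of definition of a simple factor. The paper does this when it reduces to Galois orbits of irreducible components.
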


\begin{proof}
    First we assume that $\mathcal{G}$ is absolutely simple, so that $R(\mathcal{G},\mathcal{T})$ is irreducible. Let $\alpha,\beta \in R(\mathcal{G},\mathcal{S})$, and let $\tilde{\alpha}, \tilde{\beta} \in R(\mathcal{G},\mathcal{T})$ be chosen preimages of $\alpha$ and $\beta$, respectively. Then we can assume, without loss of generality, that $F_{\beta}\subseteq F_{\alpha}.$ In fact, by just looking at all the possible actions of the Galois group on the Dynkin diagram, we can see that there is no case in which two roots split over two disjoint extensions.

     So we can base change $\beta^{\vee}:F_{\beta}^{\times}\to T$ to $F_{\alpha}$ and composing with the norm map $N_{T,F_{\alpha}/F}$ we obtain $$\beta^{\vee}_{F_{\alpha}}: F_{\alpha}^{\times}\to T(F_{\alpha})\xrightarrow[]{N_{T,F_{\alpha}/F}} T.$$

    The norm $N_{T,F_{\alpha}/F}$ on the image of $\beta^{\vee}$ acts as a square, a cube, or a power of six depending on the root data of $G$ and on $\alpha$ and $\beta$. Since the characteristic of $\mathfrak{f}$ is not 2 or 3, the conductor of $\chi\circ\beta^{\vee}$ is the same as the conductor of $\chi\circ\beta^{\vee}_{F{\alpha}}.$ Now, $\chi\circ\beta^{\vee}_{F_{\alpha}}$ and $\chi\circ\alpha^{\vee}$ are both defined on $F_{\alpha}^{\times},$ and we can use the same argument as in \cite[Lemma 3.4]{RochePrincipal} to conclude that $f$ is concave. In fact, we can say that there exists $l,m,n\in\mathbb{Z}$ such that 
    $$l(\alpha^{\vee}+\beta_{F_{\alpha}}^{\vee})=m\alpha^{\vee}+n\beta_{F_\alpha}^{\vee},$$
    and since the map $x\mapsto x^s$ is invertible on $1+\mathfrak{o}_F^i$ ($i\geq 1$) for $s$ prime to the characteristic of the residue field, we conclude that $c_{\alpha+\beta}\leq \max(c_\alpha,c_\beta).$ This condition implies that $f$ is concave. For example, if $\alpha>0$, $\beta<0$, $c_\beta\geq c_\alpha$ and $\alpha+\beta<0$ then 
    $$f(\alpha+\beta)=e^{-1}\left\lfloor\frac{(e\cdot c_{\alpha+\beta})+1}{2}\right\rfloor\leq e^{-1}\left\lfloor\frac{(e\cdot c_{\beta})+1}{2}\right\rfloor\leq e^{-1}\left\lfloor\frac{e\cdot c_{\alpha}}{2}\right\rfloor+e^{-1}\left\lfloor\frac{(e\cdot c_{\beta})+1}{2}\right\rfloor=f(\alpha)+f(\beta).$$ One can study all other cases similarly.

    Notice that if $R(\mathcal{G},\mathcal{S})$ is non-reduced, this does not create problems, since in our proof nothing breaks if one chooses $\beta$ to be $\alpha$. Moreover, if $R(\mathcal{G},\mathcal{S})$ is irreducible and non reduced, then $2\alpha+\beta$ is never a root if $\beta$ is not a multiple of $\alpha.$
    
    If $R(\mathcal{G},\mathcal{T})$ is not irreducible, then the Weil group might act by permuting its irreducible components. If $\tilde{\alpha}$ and $\tilde{\beta}$ belong to two different irreducible components that are not in the same orbits for the action of the Weil group, then $\alpha+\beta$ is not a root and the condition on the conductors is trivial. On the other hand, if $\tilde{\alpha}$ and $\tilde{\beta}$ belong to two different irreducible components which are in the same orbit for the action of the Weil group, then $\tilde{\beta}$ is conjugate to a root $\gamma$ in the same irreducible component as $\tilde{\alpha}.$ Then 
    $$\cond(\chi\circ\beta^{\vee}) =\cond(\chi\circ\gamma^{\vee}),$$ 
    and from the case of $R(\mathcal{G},\mathcal{T})$ irreducible, we conclude. 
    \end{proof}

\begin{remark}
    For the remainder of this paper, we assume that the characteristic of $\mathfrak{f}$ is neither 2 nor 3, so that $f$ is concave.
\end{remark}

Let $x$ be the barycenter of the chamber associated with the Iwahori subgroup $I$ coming from our choice of a Borel $B$ and let $T^0$ be the maximal compact subgroup of $T$. Since $f$ is concave, we can consider the compact subgroup $J_{x,f}$ associated with $f$ and $x$. We recall that this is the subgroup generated by $T^0$ and by $U_{\alpha,x,f}=U_{\alpha,x,f(\alpha)}U_{\alpha,x,f(2\alpha)}$ for every $\alpha\in R(\mathcal{G},\mathcal{S}).$ We will usually suppress the $x$ in the notation and just denote it $J_f, U_{\alpha,f}$ or $ U_{\alpha,f(\alpha)}.$ For the definition of these groups, as well as the higher Moy--Prasad filtration subgroups $T_r\subset T^0$ for $r\geq 0$, we refer to \cite[Chapter 7]{KalPras}.

The compact subgroup $J_f$ is our candidate for a type for $[T,\chi]_G.$ In order to prove that this is a type, we will use some results from \cite{Blondel1,Blondel2}. We start by recalling the following characterization of a cover of a type from \cite{Blondel2}. Let $J\subset G$ be a compact open subgroup with an irreducible smooth representation $\chi_0$. Write $B=TU^+$ and let $J^+=J\cap U^+$ and $J^-=J\cap U^-$ where $U^-$ is the unipotent subgroup of the opposite Borel $B^-.$ 

\begin{proposition}\cite[Corollaire, page 255]{Blondel2}\label{Blondel2} The pair $(J,\chi_0)$ is a type for $[T,\chi]_G$ if: \begin{enumerate}
    \item $J\cap T=T^0$ and $J=J^+T^0J^-$;
    \item The restriction of $\chi_0$ to $J^\pm$ is trivial, and $\chi_0|_{T^0}=\chi.$
    \item For every irreducible smooth representation $(\pi,V)$ of $G$, the Jacquet restriction $$r_{U^+}: V^{\chi_0}\to (V_{U^+})^{\chi}$$  is injective.
\end{enumerate}  

\end{proposition}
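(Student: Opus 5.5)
The approach is to deduce the proposition from the Bushnell--Kutzko theory of covers. The plan is to show that $(J,\chi_0)$ is a $G$-cover of the $T$-type $(T^0,\chi|_{T^0})$, which is a type for $[T,\chi]_T$ since $T^0$ is the maximal compact subgroup of $T$. A $G$-cover of a type for $[T,\chi]_T$ is then automatically a type for $[T,\chi]_G$ by the main theorem of Bushnell--Kutzko on covers.

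\textbf{Step 1 (the first two cover axioms).} Conditions 1 and 2 of the proposition are exactly the first two axioms of a cover. Condition 1 gives the Iwahori decomposition $J=J^+T^0J^-$ with $J\cap T=T^0$. Condition 2 says $\chi_0$ is trivial on $J^\pm$ and restricts to $\chi$ on $T^0$.

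\textbf{Step 2 (the strongly positive element).} The remaining axiom is that, for a strongly $(B,U^+)$-positive element $z\in Z(T)$ (so $zJ^+z^{-1}\subset J^+$ and $z^{-1}J^-z\subset J^-$, strictly), the Hecke algebra $\mathcal{H}(G,\chi_0)$ contains an \emph{invertible} element $T_z$ supported on $JzJ$. Such an element exists and is unique up to scalar because $z$ normalises $T^0$ and fixes $\chi$. The standard Jacquet-lemma computation with the Iwahori decomposition gives two facts for every smooth $(\pi,V)$:
\begin{itemize}
\item the natural map $r_{U^+}\colon V^{\chi_0}\to (V_{U^+})^{\chi}$ is surjective;
\item the map intertwines the action of $T_z$ with the action of $z$, up to the factor $\delta_B(z)^{\pm1/2}$ times a nonzero volume constant.
\end{itemize}

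\textbf{Step 3 (invertibility on irreducibles).} Let $V$ be irreducible, hence admissible, so $V^{\chi_0}$ is finite dimensional. By condition 3, $r_{U^+}$ is injective, so combined with Step 2 it is an isomorphism $V^{\chi_0}\cong (V_{U^+})^{\chi}$. Since $z$ acts invertibly on $V_{U^+}$, it follows that $T_z$ acts bijectively on $V^{\chi_0}$ for every irreducible $V$.

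\textbf{Step 4 (invertibility in the algebra).} I expect this to be the main obstacle: upgrading Step 3 to invertibility of $T_z$ in $\mathcal{H}(G,\chi_0)$.
\begin{itemize}
\item Work with $\mathcal{H}(G,\chi_0)\cong\End_G(\textnormal{c-Ind}_J^G\chi_0)$, where $T_z$ acts by a $G$-endomorphism $\phi_z$ of the finitely generated representation $\textnormal{c-Ind}_J^G\chi_0$.
\item The cokernel of $\phi_z$ is finitely generated. If it were nonzero, it would have an irreducible quotient $V$.
\item By Frobenius reciprocity, $\Hom_G(\textnormal{c-Ind}_J^G\chi_0,V)=V^{\chi_0}$. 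On this space precomposition with $\phi_z$ is the action of $T_z$, up to the usual twist by the anti-involution of the Hecke algebra, which sends $T_z$ to a multiple of $T_{z^{-1}}$.
\item Every homomorphism factoring through the cokernel is killed by precomposition with $\phi_z$. So the nonzero element of $V^{\chi_0}$ coming from the quotient map is killed by this operator.
\item Applying Step 3 to $V$ (and to its contragredient, so that the anti-involution causes no problem) gives a contradiction. Hence $\phi_z$ is surjective.
\end{itemize}
For the inverse, I would exploit the $\mathcal{O}(T/T^0)$-module structure. Either one shows $\phi_z$ is injective by the dual argument with smooth duals and compact-induction adjunctions, or one shows that $T_z$ satisfies a polynomial relation over the centre whose constant term acts nowhere trivially; both routes then produce an inverse of $T_z$. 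This step is exactly where Blondel's argument replaces the invertibility requirement of Bushnell--Kutzko by the injectivity of $r_{U^+}$.

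\textbf{Step 5 (conclusion).} With Steps 1--4, $(J,\chi_0)$ is a $G$-cover of $(T^0,\chi)$, and the Bushnell--Kutzko theorem shows it is a type for $[T,\chi]_G$.
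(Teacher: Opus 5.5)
The paper does not prove this statement; it quotes Blondel's corollary. Your route through Bushnell--Kutzko covers is the natural way to reconstruct it, and Steps 1--3 are essentially right.

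\textbf{Main gap: Step 4 never produces an inverse of $T_z$.} Neither of your sketched routes works as stated.
\begin{itemize}
\item The dual route leads to full induced representations, which are not finitely generated, and to $\check\chi_0$-isotypic vectors of contragredients. Hypothesis 3 says nothing about these: applied to $\check V$ it controls $\check V^{\chi_0}$, not $\check V^{\check\chi_0}$. For the same reason your ``contragredient'' fix in the surjectivity half does not work.
\item The polynomial-over-the-centre route presupposes knowing the centre of $\mathcal{H}(G,\chi_0)$. It also needs to turn pointwise non-vanishing on irreducibles into invertibility, which is exactly the point at issue.
\end{itemize}
The missing idea is that $\Pi=\textnormal{c-Ind}_J^G\chi_0$ is projective, since $V\mapsto V^{\chi_0}$ is exact.
\begin{itemize}
\item Let $\Phi\in\End_G(\Pi)$ be the endomorphism whose precomposition on $\Hom_G(\Pi,V)=V^{\chi_0}$ is the natural action of $T_z$. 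To get it, choose the identification of $\mathcal{H}(G,\chi_0)$ with $\End_G(\Pi)$ or its opposite accordingly. Invertibility does not see the difference, so the $z\leftrightarrow z^{-1}$ twist and the contragredient detour are unnecessary.
\item Your cokernel argument shows $\Phi$ is surjective.
\item Projectivity gives $\Psi$ with $\Phi\Psi=1$. For every irreducible $V$, precomposition with $\Psi$ is then the inverse of precomposition with $\Phi$, so it is injective.
\item The same cokernel argument now shows $\Psi$ is surjective, hence bijective, and $\Phi=\Psi^{-1}$.
\end{itemize}
Alternatively, Bernstein's Noetherianity of finitely generated smooth representations makes any surjective endomorphism of $\Pi$ injective.

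\textbf{Second gap: Steps 1 and 5.} The Bushnell--Kutzko definition of a $G$-cover asks for the Iwahori decomposition and an invertible element for \emph{every} parabolic with Levi factor $T$, i.e.\ every Borel containing $T$. Their theorem that covers are types rests on this definition. Hypotheses 1--3 concern only $B$. In their proof, the other Borels are what show that every irreducible object of $\Rep(G)^{\mathfrak{s}}$ contains $\chi_0$.

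To close this you need an extra input, for instance Bernstein's second adjointness.
\begin{itemize}
\item Once $T_z$ is invertible, you have $V^{\chi_0}\cong (V_{U^+})^{\chi}\cong\Hom_G(i_{B^-}^G(\textnormal{c-Ind}_{T^0}^T\chi),V)$, naturally in $V$.
\item Hence $\textnormal{c-Ind}_J^G\chi_0\cong i_{B^-}^G(\textnormal{c-Ind}_{T^0}^T\chi)$, which is a progenerator of $\Rep(G)^{\mathfrak{s}}$.
\end{itemize}
This reduction to a single parabolic is what the cited corollary packages. Invoking ``the Bushnell--Kutzko theorem'' without it leaves Step 5 incomplete.

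\textbf{Smaller point: Step 2.} Surjectivity of $r_{U^+}\colon V^{\chi_0}\to (V_{U^+})^{\chi}$ for \emph{all} smooth $V$ is not available before $T_z$ is known to be invertible. The Jacquet-lemma computation only shows that the image is $z$-stable and contains $z^n\bar u$ for $n\gg 0$. For irreducible $V$ the conclusion does follow, because $(V_{U^+})^{\chi}$ is finite-dimensional, and that is all Step 3 uses.
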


Notice that
$$\alpha^{\vee}(F_{\alpha}^{\times})\cap \bigl\langle U_{\alpha,f}, U_{-\alpha,f}\bigl\rangle\subset \alpha^{\vee}(F_{\alpha}^{\times})\cap T_{f(\alpha)+f(-\alpha)}\subset\alpha^{\vee}(F_{\alpha}^{\times})\cap T_{c_{\alpha}},$$ 
therefore
$$T\cap \bigl\langle J_f^+, J_f^-\bigl\rangle\subset \ker(\chi).$$
So, if we set $J=J_f$ we can extend $\chi$ to $J_f$ so that it is trivial on $J_f^+$ and $J_f^-$. We call this extension $\chi_0.$ Then the first 2 conditions are obviously true by construction. The third condition will come from a modified version of the following theorem:

\begin{theorem}\label{blondel}\cite[Théorème 1]{Blondel1}
Let $$(J_i^+)_{i \ge 0} \quad \text{and} \quad (J_i^-)_{i \ge 0}$$ be sequences of compact open subgroups of $U^+$ and $U^-$ respectively, satisfying the following conditions:

\begin{enumerate}
\item For every $i \ge 0$, the product $$J_i = J_i^-\, T^0\, J_i^+$$ is a group.

\item For every $i \ge 0$, the representation $\chi$ extends to a representation $\chi_i$ of $J_i$, 
trivial on $J_i^+$ and $J_i^-$, and equal to $\chi$ on $T^0$.

\item The sequence $(J_i^+)_{i \geq 0}$ is increasing with $$\bigcup_{i \geq 0} J_i^+ = U^+,$$ and the sequence $(J_i^-)_{i \geq 0}$ is decreasing.

\item For every $i \ge 0$ and every element $x \in J_{i+1}^+ \setminus J_i^+$, there exists an open subgroup $H_{i,x}$ of $J_i^-$ such that the conjugate $x H_{i,x} x^{-1}$ is a subgroup of $J_i$ having no nonzero fixed vector in the representation $\chi_i$.
\end{enumerate}

Then, for every smooth representation $(\pi, V)$ of $G$ and every $i \geq 0$, 
the restriction of the Jacquet restriction
$$r_{U^+}:V\longrightarrow V_{U^+}$$
to the $\chi_i$-isotypic component $V^{\chi_i}$ is an injective map
$$r_{U^+}|_{V^{\chi_i}} : V^{\chi_i} \hookrightarrow (V_{U^+})^{\chi}.$$
\end{theorem}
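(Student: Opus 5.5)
The plan is to combine Jacquet's characterization of the kernel of $r_{U^+}$ with an inductive ``averaging then projecting back'' argument along the filtration $(J_j^+)_{j\geq i}$. For a compact open subgroup $K\subset G$ and a character $\theta$ of $K$, write $e_{K,\theta}$ for the idempotent projecting $V$ onto $V^{\theta}$, and $e_K$ when $\theta$ is trivial.

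\textbf{Step 1: reduction to averaging operators.} By Jacquet's lemma, a vector $v\in V$ lies in $\ker(r_{U^+})$ if and only if $e_K v=0$ for some compact open subgroup $K\subset U^+$. By condition 3, every such $K$ lies in some $J_m^+$. So it suffices to prove the following claim: for every $m\geq i$, the operator $e_{J_m^+}$ is injective on $V^{\chi_i}$. That the image of $r_{U^+}|_{V^{\chi_i}}$ lies in $(V_{U^+})^{\chi}$ is immediate, since $T^0$ normalizes $U^+$ and acts on $V^{\chi_i}$ through $\chi_i|_{T^0}=\chi$.

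\textbf{Step 2: one step of the induction.} Let $v\in V^{\chi_j}$ with $j\geq i$, and put $w=e_{J_{j+1}^+}v$. I would check that $w\in V^{\chi_{j+1}}$ and that $e_{J_j,\chi_j}w=c\,v$ for some constant $c>0$.
\begin{itemize}
\item For the first claim: $w$ is $J_{j+1}^+$-fixed. Since $T^0$ normalizes $J_{j+1}^+$, the vector $w$ is still $\chi$-isotypic for $T^0$. For $J_{j+1}^-$-invariance I would use that $J_{j+1}^-\subset J_j^-$ (condition 3) together with the Iwahori decomposition of the group $J_{j+1}=J_{j+1}^-T^0J_{j+1}^+$ (condition 1), so that averaging over $J_{j+1}^+$ preserves invariance under $J_{j+1}^-$. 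This needs a small computation: write $yu=u'ty'$ for $y\in J_{j+1}^-$, $u\in J_{j+1}^+$, and use that $\chi_{j+1}$ is trivial on $J^{\pm}_{j+1}$.
\item For the second claim: write $J_{j+1}^+=\bigsqcup_x xJ_j^+$, so that $w$ is a positive multiple of $\sum_x\pi(x)v$. Apply $e_{J_j,\chi_j}$. The coset $x\in J_j^+$ contributes a multiple of $v$. For $x\notin J_j^+$, the vector $\pi(x)v$ is fixed by $xH_{j,x}x^{-1}$, because $v$ is fixed by $H_{j,x}\subset J_j^-$. By condition 4, $\chi_j$ is nontrivial on $xH_{j,x}x^{-1}\subset J_j$, hence $e_{J_j,\chi_j}e_{xH_{j,x}x^{-1}}=0$ and this term vanishes.
\end{itemize}

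\textbf{Step 3: iteration.} Starting from $v\in V^{\chi_i}$ and applying Step 2 repeatedly, $e_{J_m^+}v$ is obtained by successive averaging, each step remaining in $V^{\chi_j}$ for the relevant $j$. This uses $e_{J_{j+1}^+}e_{J_j^+}=e_{J_{j+1}^+}$, because the sequence $(J_j^+)$ is increasing. Composing the projections $e_{J_{m-1},\chi_{m-1}},\dots,e_{J_i,\chi_i}$ then recovers a positive multiple of $v$, which gives the claim in Step 1 and hence injectivity.

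The main obstacle is the bookkeeping in Step 2. One must verify that the averaged vector is genuinely $\chi_{j+1}$-isotypic, using only the Iwahori decomposition and the monotonicity of $J^{\pm}$. One must also make sure the cross terms vanish precisely because condition 4 is formulated on conjugates $xH_{j,x}x^{-1}$ lying inside $J_j$.
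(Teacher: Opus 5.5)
Your argument is correct and is essentially Blondel's proof, which is how the paper uses this theorem: Jacquet's lemma reduces injectivity to the non-vanishing of $e_{J_m^+}v$. Condition 4 enters only through the identity $e_{J_j,\chi_j}e_{J_{j+1}^+}v=[J_{j+1}^+:J_j^+]^{-1}v$ on $V^{\chi_j}$, which gives the implication \eqref{implication} (Blondel's Proposition 1); your claim that $e_{J_{j+1}^+}$ maps $V^{\chi_j}$ into $V^{\chi_{j+1}}$ is the analogue of Lemma \ref{Lemma}. The one step still to write out is that claim, and the cleanest route is to note that $e_{J_{j+1}^+}v=e_{J_{j+1},\chi_{j+1}}v$ for $v$ fixed by $J_{j+1}^-$ and $\chi$-isotypic under $T^0$, because Haar measure on $J_{j+1}=J_{j+1}^+T^0J_{j+1}^-$ factors as a product; this avoids checking by hand that $u\mapsto u'$ preserves measure.
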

As one can see, finding Iwahori types is not possible using this theorem. In fact, Condition 4 requires $\chi$ to not have any non-zero fixed vector on some $H_{i,x}$, which is not possible if $\chi$ is trivial on $J\cap T^0.$ But in order to prove that $r_{U^+}$ is injective, the hypothesis can be relaxed. 

\begin{theorem}\label{type}
    The pair $(J_f,\chi_0)$ is a type for $[T,\chi]_G.$
\end{theorem}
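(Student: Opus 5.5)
By Proposition \ref{Blondel2}, and since conditions 1 and 2 hold by construction of $J_f$ and $\chi_0$, only condition 3 remains: for every irreducible smooth $(\pi,V)$, the Jacquet map $r_{U^+}\colon V^{\chi_0}\to (V_{U^+})^{\chi}$ must be injective. The plan is to reprove Theorem \ref{blondel} with condition 4 weakened, so that it also covers the roots where $\chi\circ\alpha^\vee$ is tamely ramified or unramified.

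\textbf{Construction of the filtration.} Pick $\lambda$ in the cocharacter group of $\mathcal{S}$, strictly dominant for $B$, with $a=\lambda(\varpi)$. Set
$$J_i^+=a^{-i}J_f^+a^{i},\qquad J_i^-=a^{-i}J_f^-a^{i}.$$
Then $J_i^+$ increases with union $U^+$, and $J_i^-$ decreases. Each $J_i=J_i^-T^0J_i^+$ is a group: it equals $a^{-i}J_fa^i$, and conjugating by $a$ fixes $T^0$. The character $\chi_i:=\chi_0\circ\Ad(a^i)$ is trivial on $J_i^\pm$ and equals $\chi$ on $T^0$. So conditions 1 to 3 of Theorem \ref{blondel} hold, and condition 3 of Proposition \ref{Blondel2} is the case $i=0$.

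\textbf{Replacing condition 4.} Blondel's argument has two parts. First, a vector $v\in V^{\chi_i}$ with $r_{U^+}(v)=0$ is killed by averaging over some $J_{k}^+$. Second, one descends from $k$ to $i$ using that averaging over $J_{k+1}^+$ differs from averaging over $J_k^+$ by a sum over cosets $x\in J_{k+1}^+/J_k^+$. Condition 4 is used only to kill the contributions of the cosets $x\notin J_k^+$, via a subgroup $xH_{i,x}x^{-1}\subset J_k$ on which $\chi_k$ has no fixed vectors.

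I would weaken this to the following condition (4'): for each $x\in J_{k+1}^+\setminus J_k^+$, either
\begin{itemize}
\item[(a)] condition 4 holds for $x$, or
\item[(b)] $x$ lies in $J_{k}\cdot\mathcal{N}$, where $\mathcal{N}$ consists of elements whose associated averaging operators still preserve the kernel of $r_{U^+}$ on $V^{\chi_k}$.
\end{itemize}
Concretely, for (b) I would take the elements of $U_{\alpha}$ for roots with $c_\alpha=0$. Writing $x=\prod_\alpha x_\alpha$ in a fixed order over positive roots, each factor is treated as follows:
\begin{itemize}
\item If $c_\alpha>0$, then $f(\alpha)+f(-\alpha)=c_\alpha$. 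The commutator of $x_\alpha$ with $U_{-\alpha,f}$ lands in $\alpha^\vee(F_\alpha^\times)\cap T_{c_\alpha-\epsilon}$, where $\chi\circ\alpha^\vee$ is nontrivial. This gives the group $H$ of condition 4, exactly as in Roche's split argument. The concavity lemma proved above guarantees that the relevant products stay in $J_k$.
\item If $c_\alpha=0$, then $f(\alpha)=0$ and $f(-\alpha)=e^{-1}$. Here the relevant rank-one subgroup $\langle U_{\alpha,0},U_{-\alpha,e^{-1}}\rangle T^0$ is contained in a parahoric, and $\chi$ is trivial on $\alpha^\vee(\mathfrak{o}_{F_\alpha}^\times)$. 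For these roots I would use the Iwahori (depth-zero) argument instead. After dividing by the characters of $T^0/T_{0+}$, the Iwahori–Hecke algebra structure, specifically Borel's theorem on the injectivity of Jacquet restriction on Iwahori-fixed vectors, shows that such $x$ do not obstruct injectivity.
\end{itemize}

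\textbf{Main obstacle.} The hard step is mixing the two regimes in (4'). I expect to split $R^+$ into roots with $c_\alpha>0$ and roots with $c_\alpha=0$. The latter form a closed subsystem $R_\chi$, by the concavity bound $c_{\alpha+\beta}\le\max(c_\alpha,c_\beta)$. One then needs to factor $J_f$ as a product of its part for $R\setminus R_\chi$ and an Iwahori-type group for the reductive subgroup $G_\chi$ attached to $R_\chi$.

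The argument would proceed in two steps:
\begin{enumerate}
\item Handle the $R\setminus R_\chi$ part by Blondel's commutator argument. This reduces to injectivity for Jacquet restriction to the parabolic with Levi $G_\chi$, i.e.\ $\chi$-isotypic vectors of an Iwahori-type subgroup of $G_\chi$.
\item Conclude with Borel/Casselman injectivity for depth-zero characters on $G_\chi$.
\end{enumerate}
The delicate point is checking that the ordered product decomposition of $J_f^\pm$ along this splitting is compatible with the conjugations by $a^i$. It also requires the commutators between the two families to remain inside $J_k$. This is where concavity of $f$, and the hypothesis $p\neq2,3$, would be used essentially.
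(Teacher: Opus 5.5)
There is a genuine gap, and it sits exactly at what you call the main obstacle.

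Condition (4')(b) is not a checkable hypothesis. ``Elements whose averaging operators preserve the kernel of $r_{U^+}$ on $V^{\chi_k}$'' restates what has to be proved. Moreover, with the dilation filtration $J_k^+=a^{-k}J_f^+a^k$, every root subgroup grows at every step. So a representative $x\in J_{k+1}^+\setminus J_k^+$ generally has components outside $J_k$ in ramified and unramified root groups at the same time. For such $x$, neither Condition 4 nor a depth-zero argument applies.

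Your global remedy is Jacquet restriction to a parabolic with Levi $G_\chi$ attached to $R_\chi=\{\alpha: c_\alpha=0\}$, followed by a depth-zero argument on $G_\chi$. This fails because $R_\chi$ is in general not a Levi subsystem, so no such parabolic exists.
\begin{itemize}
\item For split $\mathrm{SO}_8$ and $\chi=(\eta,\eta,1,1)$ with $\eta$ ramified quadratic, $R_\chi=\{\pm e_1\pm e_2,\pm e_3\pm e_4\}$. This is closed and of full rank but is not all of $D_4$, so it is not Levi. It is the same phenomenon that forces Section~\ref{FD} to work with an endoscopic group $\mathcal{J}$ rather than a Levi subgroup.
\item For $\mathrm{Sp}_4$ and $\chi=(\eta,\eta)$ it is worse: $R_\chi=\{\pm e_1\pm e_2\}$ is not even closed, since $c_{2e_1}=1$.
\end{itemize}
The bound $c_{\alpha+\beta}\le\max(c_\alpha,c_\beta)$ that you invoke holds when the maximum is positive, by invertibility of $l$-th powers on $1+\mathfrak{p}^i$ for $i\ge 1$. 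When $c_\alpha=c_\beta=0$, it only shows that $\chi\circ(\alpha+\beta)^\vee$ is at most tamely ramified. Separately, for a nontrivial depth-zero $\chi|_{T^0}$, Borel's theorem on Iwahori-fixed vectors is not the right input; one needs the depth-zero types of Morris and Roche.

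The missing idea is to choose the filtration so that the two regimes never meet in a single step. The paper sets $f_i(\alpha)=f(\alpha)+\langle\alpha,x_i-x_0\rangle$. Here the $x_i$ are barycenters of a gallery of alcoves in the negative cone at a special vertex, each wall is crossed at most once, and walls of every positive root are crossed infinitely often. Thus exactly one pair $U_{\pm\alpha}$ changes from $J_i$ to $J_{i+1}$.
\begin{itemize}
\item If $c_\alpha\neq 0$, Condition 4 holds with $H_{i,x}=U_{-\alpha,f_i}$, and the remaining components of $x$ lie in $J_i$. This comes from the rank-one computation: $\SL_2$, $\SU_3$, restriction of scalars, and descent from $G^{\Sc}$.
\item If $c_\alpha=0$, the paper directly verifies the only consequence of Condition 4 used in Blondel's proof, namely the implication \eqref{implication}. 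After reordering the integrals and applying Lemma~\ref{Lemma} (averaging over $U_{\alpha,f_{i+1}}$ sends $V^{\chi_i}$ into $V^{\chi_{i+1}}$), one may assume $v\in V^{\chi_i}$. Non-vanishing then follows from the depth-zero type in the rank-one Levi $G_\alpha=\langle U_\alpha,T,U_{-\alpha}\rangle$.
\end{itemize}
Such rank-one Levi subgroups exist for every root, which is precisely what your $G_\chi$ lacks.
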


Since we want to apply the same strategy as in Theorem \ref{blondel}, we will start by defining a sequence of compact subgroups $\{J_{f_i}\}_{i\geq 0}.$ Let $\mathcal{A}=\mathcal{A}(T)=\mathcal{A}(S)$ be the apartment of the Bruhat-Tits building associated with $T$. We choose a sequence of points $\{x_k\}_{k\geq 0}\in \mathcal{A}$ in the following way: we fix a special vertex $y$ in $\mathcal{A}$ and consider the negative cone with vertex $y,$ i.e. the set of elements $$\overline{C^-}:=y+C^-:=y+\{y'\in\mathcal{A}\mid \big\langle\lambda,y'\bigl\rangle<0\text{ for every }\lambda\in \Delta\}.$$ Let $\mathcal{C}$ be the only alcove in $\overline{C^-}$ containing $y$, and consider the barycenter $x_0$ of $\mathcal{C}$. Because the building is a chamber complex and $\overline{C^-}$ is convex, any alcove in $\overline{C^-}$ can be reached from $\mathcal{C}$ by a gallery all contained in $\overline{C^-}$. Now we choose a sequence $\{x_k\}_{k\geq 1}\subset\mathcal{A}(T)$ such that:\begin{itemize}
    \item  $x_k$ is the barycenter of some alcove $\mathcal{C}_k$ such that $\mathcal{C}_k\neq \mathcal{C}_{k'}$ for $k\neq k',$ and $\mathcal{C}_{k}\subset \overline{C^-}$ for every $k\geq 1.$
    \item $\mathcal{C}_k$ is obtained from $\mathcal{C}_{k-1}$ using just one reflection, and the intersection $\mathcal{C}_k\cap C_{k-1}$ is a facet of maximal dimension $-1$. 
    \item If $\mathcal{C}_k=s_{\alpha+l}\mathcal{C}_{k-1}$ for some root $\alpha$ and some integer $l,$ then $\mathcal{C}_{k'}\neq s_{\alpha+l}\mathcal{C}_{k'-1}$ for every $k'\neq k.$ In other words, we can reflect through a fixed hyperplane just once.  
    \item For every root $\alpha\in R^+,$ there are infinitely many chambers $\mathcal{C}_k$ such that $\mathcal{C}_k=s_{\alpha+l}\mathcal{C}_{k-1}$ for some integer $l.$
\end{itemize}

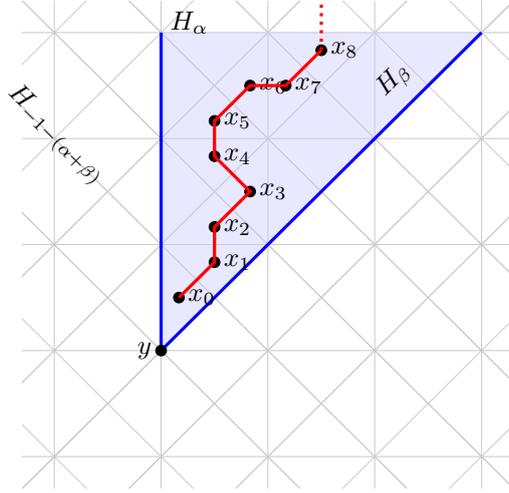
\begin{figure}[H]
    \centering
    
\begin{center}
\begin{tikzpicture}[scale=2.0, baseline=0, x=2em, y=2em]
\hyperplanesBtwo

\fill[blue!20, opacity=0.45] (-1,-1)--(2,2)--(-1,2);

\draw[blue, very thick] (-1,-1)--(-1,2);
\draw[blue, very thick] (-1,-1)--(2,2);

\coordinate (P0) at ({-5/6},{-1/2});
\coordinate (P1) at ({-1/2},{-1/6});
\coordinate (P2) at ({-1/2},{1/6});
\coordinate (P3) at ({-1/6},{1/2});
\coordinate (P4) at ({-1/2},{5/6});
\coordinate (P5) at ({-1/2},{7/6});
\coordinate (P6) at ({-1/6},{3/2});
\coordinate (P7) at ({1/6},{3/2});
\coordinate (P8) at ({1/2},{11/6});

\coordinate (P) at ({-1},{-1});

\filldraw[black] (P) circle (1pt);  \node[anchor=east] at (P) {$y$};

\foreach \k in {0,...,8}{
    \filldraw[black] (P\k) circle (1pt);
    \node[anchor=west] at (P\k) {$x_{\k}$};}

\draw[red, very thick]
    (P0) -- (P1) -- (P2) -- (P3) --
    (P4) -- (P5) -- (P6) -- (P7) -- (P8);

\draw[red, dotted, very thick] 
    ({1/2},{11/6}) -- ({1/2},{7/3});

\begin{scope}[cm={1,0.5,0,0.866,(0,0)}]
        \node[rotate=45, anchor=west]  at (1,1) {$H_{\beta}$};
        \node[rotate=-45, anchor=east] at (-3/2,3/2) {$H_{-1-(\alpha+\beta)}$};
        \node[anchor=west]  at (-1, 3) {$H_{\alpha}$};
\end{scope}

\end{tikzpicture}\caption{\small Example of such a sequence in the case of $B_2.$}
\end{center}
\end{figure}

We now define $f_0(\alpha)=f(\alpha)$ and for $i\geq 1$ we set $$f_i(\alpha):=
f(\alpha)+\bigl\langle\alpha,x_i-x_0\bigl\rangle.$$ Each $f_i$ is still a concave function since the pairing is a linear function. We denote $J_i=J_{f_i}$ and we let $(J_i^+)_{i\geq 0}$ and $(J^-_i)_{i\geq 0}$ be $$J_i^+:=J_i\cap U^+,\hspace{0.5cm}J_i^-:=J_i\cap U^-.$$ We extend $\chi$ to a representation $\chi_i$ on $J_i$ that is trivial on $J_i^-$ and $J_i^+$. In fact, as for $J_f,$ we have 
$$T\cap\bigl\langle J_i^+, J_i^-\bigl\rangle\subset \ker(\chi).$$ In this way, Conditions 1,2 and 3 of Theorem \ref{blondel} are trivially satisfied.

Before proving the theorem, we will prove the following lemma. We now fix, for the rest of this paper, a Haar measure $\mu_U$ on $U$ which decomposes as $\mu_U= \mu_{\alpha_1}\cdots \mu_{\alpha_n}$ with $\mu_{\alpha_i}$ a Haar measure on $U_{\alpha_i}.$ 

\begin{lemma}\label{Lemma}
    Assume the above setting. Let $(\pi, V)$ be any representation of $G$, and let $v$ be a non-zero element in the $\chi_i$-isotypic component $V^{\chi_i}.$ Let $\alpha\in R(G,S)$ be the only indivisible positive root such that the root subgroup $U_{\alpha}$  is modified at the $i+1$-step. Then $$w:=\int_{U_{\alpha,f_{i+1}}}\pi(u_\alpha) v \hspace{0.2cm}d\mu_{\alpha}(u_{\alpha})$$ lies in $ V^{\chi_{i+1}}.$
\end{lemma}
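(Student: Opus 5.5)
We must show that $J_{i+1}$ acts on $w$ through $\chi_{i+1}$. By the concave-function description, $J_{i+1}$ is generated by $T^0$ and the root subgroups $U_{\beta,f_{i+1}}$ for $\beta\in R(\mathcal{G},\mathcal{S})$. Passing from $x_i$ to $x_{i+1}$ crosses exactly one affine hyperplane $H_{\alpha+l}$. Hence $f_{i+1}(\beta)=f_i(\beta)$ for all $\beta\notin\{\pm\alpha,\pm2\alpha\}$, while $f_{i+1}(\alpha)=f_i(\alpha)-\epsilon$ and $f_{i+1}(-\alpha)=f_i(-\alpha)+\epsilon$, with $\epsilon$ the relevant jump (the cone condition $\mathcal{C}_k\subset\overline{C^-}$ makes the positive side grow). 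So
\[U_{\alpha,f_{i+1}}\supseteq U_{\alpha,f_i},\qquad U_{-\alpha,f_{i+1}}\subseteq U_{-\alpha,f_i},\]
and all other root subgroups coincide. The plan is to check invariance of $w$ generator by generator. It is convenient to write $w=\mathrm{vol}(U_{\alpha,f_i})\sum_{u\in U_{\alpha,f_{i+1}}/U_{\alpha,f_i}}\pi(u)v$, using that $v$ is fixed by $U_{\alpha,f_i}\subset J_i^+$.

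\emph{The group $U_{\alpha,f_{i+1}}$.} The vector $w$ is fixed by left invariance of the Haar measure $\mu_\alpha$, and $\chi_{i+1}$ is trivial there.

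\emph{The torus $T^0$.} For $t\in T^0$ we have $\pi(t)w=\int \pi(tut^{-1})\pi(t)v\,d\mu_\alpha(u)=\chi(t)w$. Here $t$ normalizes $U_{\alpha,f_{i+1}}$ and preserves $\mu_\alpha$, since $T^0$ is compact so the modulus is trivial.

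\emph{The subgroups $U_{\beta,f_{i+1}}$ with $\beta\ne\pm\alpha,\pm2\alpha$.} Take $g$ in such a subgroup and $u\in U_{\alpha,f_{i+1}}$, and write $gu=u\,(u^{-1}gu)$. The commutator $[u^{-1},g]$ lies, by the Chevalley commutator formula and concavity of $f_{i+1}$, in the product of $U_{p\alpha+q\beta,f_{i+1}}$ with $p,q>0$. None of these roots is $\pm\alpha$, so those groups lie in $J_i$. Moreover they lie in $J_i^+$ or $J_i^-$, or in $T\cap\langle J_i^+,J_i^-\rangle\subset\ker\chi$ when $p\alpha+q\beta=0$. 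Hence $\chi_i$ is trivial on them, and
\[\pi(g)\pi(u)v=\pi(u)\pi(u^{-1}gu)v=\chi_i(u^{-1}gu)\,\pi(u)v=\pi(u)v.\]
For this step I would order the positive roots so that $U^+$ factors compatibly with $\mu_U=\mu_{\alpha_1}\cdots\mu_{\alpha_n}$.

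\emph{The subgroup $U_{-\alpha,f_{i+1}}$.} I expect this case to be the main obstacle. Here $g\in U_{-\alpha,f_{i+1}}$ and $u\in U_{\alpha,f_{i+1}}$ generate a rank-one group. The relation $f_{i+1}(\alpha)+f_{i+1}(-\alpha)=f_i(\alpha)+f_i(-\alpha)$ keeps the total depth fixed: it equals $c_\alpha$ when $c_\alpha>0$, and is positive in the depth-zero case. This is exactly what is needed to control $u^{-1}gu$. I would use the rank-one decomposition in $\langle U_{\alpha},U_{-\alpha}\rangle$ (of type $\mathrm{SL}_2$ or $\mathrm{SU}_3$ over $F_\alpha$) to write
\[u^{-1}gu=u'\,t'\,g'\quad\text{with}\quad u'\in U_{\alpha,f_{i+1}},\ t'\in\alpha^{\vee}(F_\alpha^\times)\cap T_{c_\alpha},\ g'\in U_{-\alpha,f_{i+1}}\subseteq J_i^-.\]
Since $\chi(t')=1$ by the definition of $c_\alpha$, we get $\pi(g)\pi(u)v=\pi(uu')v$. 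Summing over the cosets $u$, the map $u\mapsto uu'$ permutes $U_{\alpha,f_{i+1}}/U_{\alpha,f_i}$. One must check this carefully, possibly by integrating over all of $U_{\alpha,f_{i+1}}$ and using measure preservation of the substitution $u\mapsto u u'(u)$. That gives $\pi(g)w=w$, and for non-reduced $R$ the $2\alpha$ part is handled the same way.

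Together these four cases give $\pi(j)w=\chi_{i+1}(j)w$ for all $j\in J_{i+1}$, so $w\in V^{\chi_{i+1}}$.
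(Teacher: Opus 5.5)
Your proposal is correct and is essentially the paper's proof. Both arguments proceed generator by generator: $T^0$ by normalization; the unchanged root subgroups $U_{\beta,f_{i+1}}$ with $\beta\notin\{\pm\alpha,\pm2\alpha\}$ via the commutator formula and concavity of $f_{i+1}$; and $U_{-\alpha,f_{i+1}}$ via the rank-one factorization $U_{-\alpha,f_{i+1}}U_{\alpha,f_{i+1}}\subset U_{\alpha,f_{i+1}}\,(\alpha^{\vee}(F_\alpha^{\times})\cap T_{c_\alpha})\,U_{-\alpha,f_{i+1}}$, whose torus part lies in $\ker(\chi)$.

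The change-of-variables step you flag is used tacitly in the paper and does go through. Let $K=\langle U_{\alpha,f_{i+1}},T^0,U_{-\alpha,f_{i+1}}\rangle$, which has an Iwahori factorization. Then $u\mapsto uu'$ is just the action of $g$ on $K/(K\cap TU^-)\cong U_{\alpha,f_{i+1}}$, so it is a bijection preserving $\mu_\alpha$, which is the image of the Haar measure of $K$.
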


\begin{proof}
    First of all, we notice that $w$ obviously lies in $V^{\chi},$ since $T^0$ normalizes $U_{\alpha,r}$ and $U_{2\alpha,r}$ for every $r>0.$
    We first look at the action of $J_{i+1}^+$ on $w.$ Let $u_{\beta}\in U_{\beta,f_{i+1}}$ be a non-trivial element for some positive root $\beta$. Clearly, if $\beta=\alpha$ or $\beta=2\alpha,$ then $$\pi(u_{\beta})\cdot w=w.$$
    So we can assume $\beta\not\in\{\alpha,2\alpha\}$ and $U_{\beta,f_{i+1}}=U_{\beta,f_i}.$ The element $u_\beta u_\alpha u_{\beta}^{-1}$ lies in 
    $$u_{\alpha}\prod_{p>0,q>0}U_{p\alpha+q\beta,pf_{i+1}(\alpha)+qf_{i+1}(\beta)}.$$
    Here we are using that in the product $\prod_{p>0,q\geq0}U_{p\alpha+q\beta,pf_{i+1}(\alpha)+qf_{i+1}(\beta)}$ the order of the terms can be chosen arbitrarily. Notice that since $p,q>0,$ and since $f_{i+1}$ is concave, we have $$pf_{i+1}(\alpha)+qf_{i+1}(\beta)\geq f_{i+1}(p\alpha+q\beta)=f_{i}(p\alpha+q\beta).$$ Finally, since the action of $u_{\beta}$ is trivial on $v$ we can write 

    \begin{align*}
    \pi(u_\beta)\cdot w&= \int_{U_{\alpha,f_{i+1}}}\pi(u_\beta u_\alpha) v \hspace{0.2cm}d\mu_{\alpha}(u_{\alpha})=\int_{U_{\alpha,f_{i+1}}}\pi(u_\beta u_\alpha u_{\beta}^{-1}) v \hspace{0.2cm}d\mu_{\alpha}(u_\alpha)\\ =& \int_{U_{\alpha,f_{i+1}}}\pi(u_\alpha) v \hspace{0.2cm}d\mu_{\alpha}(u_{\alpha}).
    \end{align*}

    Now we look at the action of $J_{i+1}^-.$ Let now $\beta$ be a negative root and $u_{\beta}\in U_{\beta,f_{i+1}}$ a non-trivial element. First, we assume $\beta\neq -\alpha.$ Then using a similar argument as above we can conclude that $\pi(u_{\beta})\cdot w=w.$ So, we only need to study the case $\beta=-\alpha.$ Then we can use \cite[Proposition 7.1.1 (1)]{KalPras} and its proof to write $u_{\beta}u_{\alpha}=u'_{\alpha}t u'_{\beta}$ with $u_{\beta}'\in U_{\beta,{f_{i+1}}}, u_{\alpha}'\in U_{\alpha,{f_{i+1}}}$ and $t\in T_{c_{\alpha}}\cap \alpha^{\vee}(F_{\alpha}^{\times}).$

    Since $J_{i+1}^-\subset J_i^-,$ the action of $u_{\beta}'$ is trivial on $v$. The action of $t$ is trivial as well. So, the action of $u_{\beta}$ is trivial and $w\in V^{\chi_{i+1}}.$   
\end{proof}

Now we can start with the proof of Theorem \ref{type}. We divide the proof in two steps:

\paragraph{Step 1: Rank one groups}

We first look at the case where $G$ has $F$-rank one. We already know that for $c_{\alpha}=0,$ $\chi$ is depth-zero and $(J_f,\chi_0)$ is a type for $[T,\chi]_G$ (see \cite{RochePrincipal} for the split case and \cite{morris1999level} for the quasi-split case). So we assume $c_{\alpha}>0.$ We want to check Condition 4 of Theorem \ref{blondel}. 

First, we study the case of $\SL_2$ and $\SU_3.$ In $\SL_2$ we can write the product of a matrix in $U_{\alpha}$ and a matrix in $U_{-\alpha}$ in the following way: $$\begin{pmatrix}
    1 & a \\ &1
\end{pmatrix}\cdot \begin{pmatrix}
   1 &  \\ a'&1
\end{pmatrix}=\begin{pmatrix}
    1 &  \\ \gamma^{-1}a'&1
\end{pmatrix}\begin{pmatrix}
    \gamma &  \\&\gamma^{-1}
\end{pmatrix}\begin{pmatrix}
    1 &\gamma^{-1}a\\&1
\end{pmatrix}$$ with $\gamma= 1+aa'.$

On the other hand, in $\SU_3$ we have the decomposition $$\begin{pmatrix}
    1 & a & b\\ &1& \overline{a}\\&&1
\end{pmatrix}\cdot \begin{pmatrix}
   1 &  & \\ a'&1& \\b'&\overline{a'}&1
\end{pmatrix}=\begin{pmatrix}
    1 &  & \\ a_1'&1& \\ b_1'&\overline{a_1'}&1
\end{pmatrix}\begin{pmatrix}
    \gamma &  & \\ &\gamma\overline{\gamma}^{-1}& \\&&\overline{\gamma}^{-1}
\end{pmatrix}\begin{pmatrix}
    1 & a_1 & b_1\\ &1& \overline{a_1}\\&&1
\end{pmatrix}$$ for $\gamma= 1+aa'+bb'$, $a_1 =( a+a'b)\gamma^{-1}$, $b_1=b\gamma^{-1}$, $a'_1=(a'+ab')\gamma^{-1}$,  $b'_1=b'\gamma^{-1}$.

From these, we understand the $T$-components of the commutators $[U_{\alpha,f_{i+1}},U_{-\alpha,f_i}].$ Picking $H_{i,x}= J_i^-$ is enough to ensure that $x H_{i,x}x^{-1}\subset J_i$ for every $x\in U_{\alpha,f_{i+1}}\setminus U_{\alpha,{f_i}} $, and that $\chi_i$ has no non-zero fixed vector, since $c_\alpha+\bigl\langle x_i-x_0\bigl\rangle<c_\alpha$. Therefore, Condition 4 of Theorem \ref{blondel} is satisfied, and by Proposition \ref{Blondel2}, $J_i$ is a type for $[T,\chi]_G$ for every $i.$ 

Assume now that $G$ is simply connected. Then $G$ is the Weyl restriction of scalars of either $\SL_2$ or $\SU_3.$ In this case, we can again choose $H_{i,x}=J_i^-=U_{-\alpha,f_i}$ and the computations for $\SL_2$ and $\SU_3$ can be generalized to $G$.

Finally, let $G$ be any rank one group, and let $G^{\Sc}$ denote the simply connected cover of the derived subgroup of $G$.  
On $G^{\Sc}$ we consider the subgroup
$$H_{i,x}^{\Sc} = U^{\Sc}_{-\alpha,f_i} \subset G^{\Sc}.$$
 
Since $H_{i,x}^{\Sc}$ is schematic, it descends along the central isogeny $G^{\Sc} \to G$ to the corresponding subgroup
$$H_{i,x}:= U_{-\alpha,f_i} \subset G,$$ preserving Condition 4 of Theorem \ref{blondel}.

This proves the theorem for groups with $F$-rank one. Notice that the same proof can be adapted to prove the statement for groups with $F$-rank one extended with some torus (as we will use in the next step).

\paragraph{Step 2: General quasi-split groups}

If $G$ is any quasi-split group, we notice that Blondel's theorem can't be applied if $c_{\alpha}=0$ for some $\alpha,$ since we can't expect to find a subgroup like $H_{i,x}$ if at the $i+1$-th step we modified the root subgroup associated to $\alpha$. 

Let $(\pi,V)$ be any representation of $G.$ Looking at the proof of \cite[Theoreme 1]{Blondel1}, Blondel uses Condition 4 only to prove that for every $v\in V^{\chi_0}\setminus \{0\}$ and for every $i$ 
\begin{equation}\label{implication}
    \int_{J_i^+}\pi(u)v\hspace{0.2cm} d\mu_U(u)\neq 0\text{ implies }\int_{J_{i+1}^+}\pi(u)v\hspace{0.2cm} d\mu_U(u)\neq 0.
\end{equation} 
This is \cite[Proposition 1]{Blondel1}. We will now prove that this is indeed true for our sequence $\{J_i\}_i$ even when Condition 4 of Theorem \ref{blondel} is not verified. We proceed by induction:

\begin{itemize}
    \item At step 0, clearly $\int_{J_0^+}\pi(u)v\hspace{0.2cm}d\mu_U(u)\neq 0$ if $v\in V^{\chi_0}\setminus\{0\}.$ 

    \item If Condition 4 is satisfied at the $i+1$-th step, if $$\int_{J_i^+}\pi(u)v\hspace{0.2cm} d\mu_U(u)\neq 0$$ then $$\int_{J_{i+1}^+}\pi(u)v\hspace{0.2cm} d\mu_U(u)\neq 0$$ for every $v\in V^{\chi_0}\setminus\{0\}$ \cite[Proposition 1]{Blondel1}. 

    \item Now we analyze the case in which, at the $i+1$-th step, only the root subgroups $U_{\alpha}$ and $U_{-\alpha}$ are modified, and $c_{\alpha}=0.$ We assume that the integral $\int_{J_i^+}\pi(u)v\hspace{0.2cm} d\mu_U(u)$ is not 0 for every $v\in V^{\chi_0},$ and without loss of generality, we set $\alpha=\alpha_1$. Then
    \begin{align*}
    w &= \int_{J_{i+1}^+}\pi(u)v\hspace{0.2cm} d\mu_U(u)\\ =& \int_{U_{\alpha,f_{i+1}}}\pi(u_1)\int_{U_{\alpha_2,f_{i}}}\cdots\int_{U_{\alpha_n,f_{i}}}\pi(u_2\cdots u_n)v\hspace{0.2cm}d\mu_{\alpha_n}(u_n) \cdots d\mu_{\alpha_1}(u_1)
    \end{align*}
    
    Since $\int_{J_i^+}\pi(u)v\hspace{0.2cm} d\mu_U(u)\neq 0$, we have $$\int_{U_{\alpha_2,f_{i}}}\cdots\int_{U_{\alpha_n,f_{i}}}\pi(u_2\cdots u_n)v\hspace{0.2cm} d\mu_{\alpha_n}(u_n)\cdots d\mu_{\alpha_2}(u_2)\neq 0.$$ 

    Suppose that at the first step we modified the root subgroups associated with a certain root $\alpha_k.$ Then we can reorder the integrals moving the $\alpha_k$ integral to the right, getting $$w=\int_{U_{\alpha,f_{i+1}}}\pi(u_1)\int_{U_{\alpha_2,f_{i}}}\cdots\int_{U_{\alpha_k,f_{i}}}\pi(u_2\cdots u_k)v\hspace{0.2cm}d\mu_{\alpha_k}(u_k) \cdots d\mu_{\alpha_1}(u_1).$$

    Then $w$ is equal to $$C_1\int_{U_{\alpha,f_{i+1}}}\pi(u_1)\int_{U_{\alpha_2,f_{i}}}\cdots\int_{U_{\alpha_k,f_{i}}}\int_{U_{\alpha_k,f_1}}\pi(u_2\cdots u_k u_k')v\hspace{0.2cm} d\mu_{\alpha_k}(u_k')d\mu_{\alpha_k}(u_k)\cdots d\mu_{\alpha_1}(u_1),$$ for some constant $C_1.$ Using now Lemma \ref{Lemma}, we can assume $v\in V^{\chi_1}.$ Repeating this argument $i$ times, we can assume $v\in V^{\chi_i}.$

    With this assumption, we get 
    \begin{align*}
    w &= \int_{U_{\alpha,f_{i+1}}}\pi(u_1)\int_{U_{\alpha_2,f_{i}}}\cdots\int_{U_{\alpha_n,f_{i}}}\pi(u_2\cdots u_n)v\hspace{0.2cm} d\mu_{\alpha_n}(u_n)\cdots d\mu_{\alpha_1}(u_1)\\ =& C\int_{U_{\alpha,f_{i+1}}}\pi(u_1)v\hspace{0.2cm}d\mu_{\alpha_1}(u_1),
    \end{align*}
     for some constant $C.$ 

    We then look at the group $G_{\alpha}:= \bigl\langle U_{\alpha},T,U_{-\alpha}\bigl\rangle$ which consists of a group with $F$-rank one extended with some torus. From Step 1, the subgroup $J_{i}^{\alpha}=\bigl\langle U_{\alpha,f_i},T^0,U_{-\alpha,f_i}\bigl\rangle$ and $\chi_i|_{J_i^{\alpha}}$ form a type for $[T,\chi]_{G_{\alpha}}$. Therefore
    $$\int_{U_{\alpha,f_{i+1}}}\pi|_{G_{\alpha}}(u_1)v'\ d\mu_{\alpha_1}(u_1)\neq 0\text{ for every }v'\in V^{\chi_i|_{J_i^{\alpha}}}\setminus\{0\} ,\text{ for every }k\geq 0.$$ 

    Since $v$ is in $V^{\chi_i}$ and it is not 0 since $\int_{J_i^+}\pi(u)v\hspace{0.1cm}d\mu_U(u)\neq 0,$ we conclude that $\int_{J_{i+1}^+}\pi(u)v\hspace{0.1cm}d\mu_U(u)\neq 0.$

    \item What is left to show is that if $c_\alpha\neq 0,$ then at every step in which we modify $U_{\alpha}$ Condition 4 of Theorem \ref{blondel} is satisfied. So we assume at the $i+1$-th step we modify $U_{\alpha}=U_{\alpha_1}$ and $c_{\alpha}\neq 0$. Let $x\in J^+_{f_{i+1}}\setminus J^+_{f_i},$ and let $H_{i,x}:= U_{-\alpha, f_i}\subset J_{f_i}^-.$ We write $x=u_{\alpha_2}\cdots u_{\alpha_n}u_{\alpha},$ with $u_{\alpha_j}\in U_{\alpha_j,f_i}$ and $u_{\alpha}\in U_{\alpha,f_{i+1}}.$ We analyze $x H_{i,x}x^{-1}.$ First, by the case of rank 1 subgroups, we know that $$u_{\alpha} H_{i,x} u_{\alpha}^{-1}  \subset J_{i}.$$ Since $u_{\alpha_i}\in J_i,$ we get $$x H_{i,x}x^{-1}\subset J_i.$$ 

    The only torus component comes from $u_{\alpha} H_{i,x} u_{\alpha}^{-1}$, and we have already noticed in the case of rank 1 groups that it is not contained in the kernel of $\chi,$ meaning that there are no non-zero fixed vectors in $\chi_i.$ So Condition 4 of Theorem \ref{blondel} is satisfied. 
    \end{itemize}
    
    Now, we have shown that \ref{implication} holds, so that $$\int_{J_i^+}\pi(u)v\hspace{0.2cm} d\mu_U(u)\neq 0\implies\int_{J_{i+1}^+}\pi(u)v\hspace{0.2cm} d\mu_U(u)\neq 0$$ for every $v\in V^{\chi_0}\setminus\{0\}$ and for every $i$, and using the same argument as in the \cite[Theoreme 1]{Blondel1} we conclude $(J_f,\chi_0)$ is a type for $[T,\chi]_G$.\qed

%% file: Parameters.tex
\section{Hecke algebra for L-parameters of principal series}\label{HeckeLpar}

After fixing a separable closure $F_s$ of $F,$ we denote by $I_F\subset W_F\subset \Gal(F_s/F)$ the inertia subgroup and the Weil group, respectively. We fix, once and for the remainder of this paper, a geometric Frobenius element $\Fr$ in $W_F.$ We recall that a $L$-parameter for $G$ is a continuous homomorphism $$\varphi:W_F\times\SL_2(\mathbb{C})\to G^{\vee}\rtimes W_F=:{}^LG$$ with some extra properties:\begin{itemize}
    \item $\varphi|_{\SL_2(\mathbb{C})}$ is algebraic;
    \item $\varphi(W_F)$ consists of semisimple elements.
    \item $\varphi(w)\in G^{\vee}w$ for every $w\in W_F.$

\end{itemize}
We say that $\varphi$ is discrete if it does not factor through the $L$-group of any proper Levi subgroup of $G$. Equivalently, $\varphi$ is discrete if its centralizer in $G^{\vee}$ is finite modulo $Z(G^{\vee})^{W_F},$ where $Z(G^{\vee})$ denotes the center of $G^{\vee}.$ We say that $\varphi$ is tempered if its image projects onto a bounded subset in $G^{\vee}.$ An irreducible representation $\rho\in \Irr(\pi_0(Z_{G^{\vee}}(\varphi)))$ of the component group of the centralizer of $\varphi$ in $G^{\vee}$ is called an enhancement of $\varphi.$ We denote the set $G^{\vee}$-conjugacy classes of enhanced $L$-parameters of $G^{\vee}$ by $\Phi_e(G).$ Recall that conjecturally, the set of discrete series representations of $G$ should be in bijection with the set of $G^{\vee}$-conjugacy classes of tempered discrete $L$-parameters via the local Langlands correspondence.

We will now focus on $L$-parameters coming from principal series representations of quasi-split groups. Most of the things we will discuss come from \cite{AffineHeckeForLparameters}, but the easier formulation for principal series comes from \cite{quasisplitps}. 

Since the dual group $T^{\vee}$ of the maximal torus $T$ is a complex torus, any $L$-parameter for $T$ is trivial on $\SL_2(\mathbb{C})$ and corresponds to the $T^{\vee}$-conjugacy class of a homomorphism $$\varphi: W_F\to {}^LT.$$ 

Parameters coming from principal series can be described in terms of cohomology groups of flag varieties. In particular, let 
$$H^{\vee}:= Z_{G^{\vee}}(\varphi(W_F))$$
and let $u_{\varphi}=\varphi(1,\begin{psmallmatrix} 1&1\\0&1    
\end{psmallmatrix})$. Then an enhanced $L$-parameter $(\varphi,\rho)\in \Phi_e(G)$ is a principal series $L$-parameter if after replacing it by suitable $G^{\vee}$-conjugate:

\begin{itemize}
    \item $\varphi(\Frob_F,\begin{psmallmatrix}q_F^{-1/2}&0\\ 0&q_F^{1/2}\end{psmallmatrix})$ 
    and $\varphi(x)$ belong to $T^{\vee}\rtimes W_F$ for every $x\in I_F.$
    \item Any irreducible constituent $\rho^{\circ}$ of $\rho|_{\pi_0(Z_{H^{\vee}}(u_{\varphi}))}$ appears in the homology 
    $H_*(\mathcal{B}_{H^{\vee}}^{u_{\varphi}})$ where $\mathcal{B}_{H^{\vee}}^{u_{\varphi}}$ is the variety of Borel subgroups of $H^{\vee,\circ}$ containing $u_\varphi.$
\end{itemize}

In \cite{GeneralizationSpringer} a cuspidal support map ${}^L\Psi$ for $L$-parameters was defined. Using this, in \cite[Definition 3.5]{AffineHeckeForLparameters} the definition of inertial equivalence class of enhanced $L$-parameters was given:

\begin{definition}\cite[Definition 3.5]{AffineHeckeForLparameters}
An inertial equivalence class for $\Phi_e(G)$ is the $G^{\vee}$-conjugacy class $\mathfrak{s}^{\vee}$ of a pair $(\mathcal{L}^{\vee}\rtimes W_F,\mathfrak{s}^{\vee}_{\mathcal{L}}),$ where $\mathcal{L}(F)$ is a Levi subgroup of $G$ and $\mathfrak{s}^\vee_L$ is a $X_{\nr}(G)$-orbit in the set of cuspidal $L$-parameters $\Phi_{\cusp}(L(F))$. This yields a Bernstein decomposition 

$$\Phi_e(G)=\bigsqcup_{\mathfrak{s}^{\vee}\in\mathfrak{Be}^{\vee}(G)}\Phi_e(G)^{\mathfrak{s}^{\vee}}=\bigsqcup_{\mathfrak{s}^{\vee}\in\mathfrak{Be}^{\vee}(G)}{}^L\Psi^{-1}(L^{\vee}\rtimes W_F,\mathfrak{s}_L^{\vee}),$$ 
where $\mathfrak{Be}^{\vee}(G)$ denotes the set of inertial equivalence classes of $\Phi_e(G).$ 
\end{definition}

To an inertial equivalence class $\mathfrak{s}^{\vee}$ for $\Phi_e(G),$ we associated the finite group $$W_{\mathfrak{s}^{\vee}}:=\text{stabilizer of }\mathfrak{s}^{\vee}_{\mathcal{L}}\text{ in }N_{G^{\vee}}(\mathcal{L}^{\vee}\rtimes W_F)/\mathcal{L}^{\vee}.$$ 

Recall that in Section \ref{TypeSection}, we fixed an inertial equivalence class $\mathfrak{s}=[T,\chi]_G.$ Let $\varphi_{\chi}$ be the $L$-parameter associated to $\chi$ Let $\mathfrak{s}_T$ and $\mathfrak{s}^{\vee}_T$ the Bernstein component of $\chi$ for $T$ and the Bernstein component of $\varphi_{\chi}$ for $T^{\vee}.$ Then $\mathfrak{s}_T^{\vee}$ gives rise to a principal series Bernstein component $\Phi_e(G)^{\mathfrak{s}^{\vee}}$. Let 
$$J:=Z_{G^{\vee}}(\varphi_{\chi}|_{I_F}).$$ 
Then the Weyl group 
$$W_{\mathfrak{s}^{\vee}}^{\circ}:=W(R(J^{\circ},(T^{\vee,W_F})^{\circ}))$$
is a normal subgroup of $W_{\mathfrak{s}^{\vee}},$ and $$W_{\mathfrak{s}^{\vee}}=W_{\mathfrak{s}^{\vee}}^{\circ}\rtimes \Gamma_{\mathfrak{s}^{\vee}}$$ where $\Gamma_{\mathfrak{s}^{\vee}}$ is defined as the stabilizer in $W_{\mathfrak{s}^{\vee}}$ of the set of positive roots determined by $B^{\vee}$ in $R(J^{\circ},(T^{\vee,W_F})^{\circ}).$ To $\mathfrak{s}^{\vee}$ we associate a root datum 
$$\mathcal{R}_{\mathfrak{s}^{\vee}}=\big(R_{\mathfrak{s}^{\vee}},X^*((T^{\vee,I_F})^{\circ}_{W_F},R_{\mathfrak{s}^{\vee}},X_*((T^{\vee,I_F})^\circ_{W_F})\big)$$
where \cite[Lemma 4.1]{quasisplitps}
$$R_{\mathfrak{s}^{\vee}}=\{f_{F_{\alpha/F}}\alpha^{\vee}:\alpha^{\vee}\in R(J^{\circ},(T^{\vee,I_F})^{\circ})_{\red}\}.$$ 
Let 
$$\mathcal{H}(\mathfrak{s}^{\vee},q_F^{1/2})^\circ=\mathcal{H}(\mathcal{R}_{\mathfrak{s}^{\vee}},\lambda,\lambda^*,q_F^{1/2})$$
be the affine Hecke algebra with parameter function determined in \cite[Proposition 3.14]{AffineHeckeForLparameters}, and consider the extended affine Hecke algebra
$$\mathcal{H}(\mathfrak{s}^{\vee},q_F^{1/2})=\mathcal{H}(\mathfrak{s}^{\vee},q_F^{1/2})^{\circ}\rtimes \Gamma_{\mathfrak{s}^{\vee}}$$ constructed in \cite{AffineHeckeForLparameters}. It was proven \cite[Theorem 3.18]{AffineHeckeForLparameters} that the set of irreducible representations of $\mathcal{H}(\mathfrak{s}^{\vee},q_F^{1/2})$ is canonically in bijection with $\Phi_e(G)^{\mathfrak{s}^{\vee}}.$ This result goes through the two-steps Lusztig's reduction process described in Section \ref{ReductionSection} and a classification of irreducible modules of geometric graded Hecke algebras done in \cite[Corollary 3.23]{GradDisc}.

In \cite{quasisplitps} a local Langlands correspondence for principal series of quasi-split groups was defined. The correspondence is obtained by identifying $\mathcal{H}(\mathfrak{s}^\vee, q_F^{1/2})$ with the algebra $\mathcal{H}(\mathfrak{s})^{\Op}\cong\End_G(\Pi_{\mathfrak{s}})^{\Op}$ where $\Pi_{\mathfrak{s}}$ is a progenerator for the category $\Rep(G)^{\mathfrak{s}}$. The isomorphism 
$$\mathcal{H}(\mathfrak{s}^{\vee},q_F^{1/2})\cong \mathcal{H}(\mathfrak{s})^{\Op}$$
comes from identifying first the root data and then the finite groups $\Gamma_{\mathfrak{s}^{\vee}}$ and $\Gamma_{\mathfrak{s}}$.

It is well known \cite[Corollary 7.12]{StructureTheoryViaTypes} that whenever there is a type, the algebra $\End_G(\Pi_{\mathfrak{s}})^{\Op}$ is Morita equivalent to the Hecke algebra of the type. In the case of principal series, this Morita equivalence actually comes from an isomorphism \cite[\S 1.5]{Dat}. Therefore, in the bijection from \cite[Theorem 7.1. and its proof]{quasisplitps} we can replace $\End_G(\Pi_{\mathfrak{s}})^{\Op}$ with $\mathcal{H}(J_f,\chi_0)^{\Op}$ obtaining 
$$\Irr(G)^{\mathfrak{s}}\leftrightarrow\Irr(\mathcal{H}(J_f,\chi_0)^{\Op})\leftrightarrow\Irr(\mathcal{H}(\mathfrak{s}))^{\Op}\leftrightarrow\Irr(\mathcal{H}(\mathfrak{s}^{\vee},q_F^{1/2}))\leftrightarrow\Phi_e(G)^{\mathfrak{s^{\vee}}}.$$

From now on, if $\pi\in \Irr(G)^{\mathfrak{s}},$ we will denote by $(\varphi_\pi,\rho_\pi)$ the enhanced $L$-parameter associated to it.

%% file: Gamma.tex
\section{Computations of Epsilon Factors}
\label{EpsilonSec}

Before starting to prove the formal degree conjecture, we need to study the adjoint $\gamma$-factors involved. To compute these factors, we would like to proceed as in the split case, and split them in two pieces, one coming from some unramified $L$-parameter (for which we can use \cite[Appendix 1]{UnipotentFormalDegree} to relate them to some formal degree), and one for which we can compute the absolute value of the $\varepsilon$-factor. 

We write $\mathfrak{g}^{\vee}:=\Lie(G^{\vee})$ and $\mathfrak{t}^{\vee}:=\Lie(T^{\vee}),$ let $Z(G)_s$ the split part of $Z(G)$ and write $\mathfrak{a}^{\vee}=\Lie(Z(G^{\vee})^{W_F})$ so that $\mathfrak{g}^{\vee}/\mathfrak{a}^{\vee}=\Lie((G/Z(G)_s)^{\vee}).$  We fix $\psi$ an order-0 additive character of $F$, and we recall from \cite{ArithmeticInv} the following meromorphic functions in $s\in\mathbb{C}$ attached to any $L$-parameter $\varphi$: $$L(s,\Ad_{G^{\vee}}\circ \varphi)=\det\left(1-q_F^{-s}(\Ad_{G^{\vee}}\circ\varphi(\Frob)|_{(\mathfrak{g}^{\vee}/\mathfrak{a}^{\vee})^{\varphi(I_K)}})\right)^{-1},$$
$$\gamma(s,\Ad_{G^{\vee}}\circ\varphi,\psi)=\varepsilon(s,\Ad_{G^{\vee}}\circ\varphi,\psi)\frac{L(1-s,\Ad_{G^{\vee}}\circ \varphi)}{L(s,\Ad_{G^{\vee}}\circ\varphi)},$$
where $\varepsilon$ is the local factor defined in \cite{ArithmeticInv} as a slight modification of \cite[4.1.6]{Tate}.

For every relative root $\alpha\in R(\mathcal{G},\mathcal{S})$ we choose $\tilde{\alpha}\in R(\mathcal{G},\mathcal{T})$ preimage of $\alpha.$ Notice that every $\tilde{\alpha}\in R(\mathcal{G},\mathcal{T})$ restricts to a non-trivial character of $S$ since $G$ is quasi-split. We write 
$$\mathfrak{g}^{\vee}_{\alpha^{\vee}}:=\bigoplus_{\sigma\in \Gal(F_\alpha/F)}\Lie(U^{\vee}_{{}^\sigma\tilde{\alpha}^{\vee}})=\bigoplus_{\sigma\in \Gal(F_\alpha/F)}\mathfrak{g}^{\vee}_{{}^\sigma\tilde{\alpha}^{\vee}}.$$ 
If $F_\alpha=F,$ then the action of $\varphi(I_F)$ on $\mathfrak{g}^{\vee}_{\alpha^{\vee}}$ is trivial if and only if $c_\alpha=0.$ On the other hand, if $F_\alpha\neq F,$ the action of $\varphi(I_F)$ permutes the summand of $\mathfrak{g}^{\vee}_{\alpha^{\vee}}$, so it is never trivial but it might have some fixed points. This means that, differently from the split case, the $L$-functions might not be trivial on $\mathfrak{g}^{\vee}_{\alpha^{\vee}}.$

We split the $\gamma$-factor

$$\gamma(s,\Ad_{G^{\vee}}\circ\varphi,\psi)=\gamma(s,\Ad_{G^{\vee}}\circ\varphi|_{\mathfrak{t}^{\vee}/\mathfrak{a}^{\vee}},\psi)\cdot\prod_{\alpha\in R(\mathcal{G},\mathcal{S})}\gamma(s,\Ad_{G^{\vee}}\circ\varphi|_{\mathfrak{g}_{\alpha^{\vee}}^{\vee}},\psi),$$

and we now focus on studying the value of the $\varepsilon$-factor on $\mathfrak{g}_{\alpha^{\vee}}^{\vee}.$ Notice that if $c_\alpha=0,$ then the action of the inertia is trivial and therefore the $\varepsilon$-factor is trivial. So, we can assume $c_\alpha\neq 0.$ The absolute value of the $\varepsilon$-factors was studied in \cite{ArithmeticInv}. They proved that if $(\tau,W)$ is any Weil-Deligne representation, we have 
$$\abs{\varepsilon(0,\tau,\psi)}=q_F^{a(W)/2},$$
where $a(W)\in\mathbb{Z}_{\geq 0}$ is the Artin conductor of $W$ and is defined as follows. The inertial image $D_0=\tau(I_F)$ is the Galois group of a finite extension $F'/F$. If $A'$ is the ring of integers of $F'$ with maximal ideal $P'$, then we define the normal subgroup $D_j\leq D_0$ as the kernel of the action of $D_0$ on $A'/(P')^{j+1}$. This gives the lower indexing ramification filtration $$D_0\geq D_1\geq D_2\geq\cdots\geq D_n=1.$$
We write $d_j=|D_j|$. The Artin conductor is defined by $$a(W) = \sum_{j\geq 0}\dim(W/W^{D_j})\frac{d_j}{d_0}.$$

The $\varepsilon$-factor defined by Tate is inductive in degree 0 \cite[Theorem 3.4.1]{Tate}. Using this, Feng, Opdam and Solleveld proved that the $\varepsilon$-factor we are using is inductive in degree 0 as well \cite[Theorem A.2]{FengOpdamSolleveld}. This means that if we have extensions $F''/F'/F,$ then 
\[\begin{tikzcd}
	{R^0(W_{F'})} && {R^0(W_{F''})} \\
	& {\mathbb{C}^{\times}}
	\arrow["\Ind_{W_{F'}}^{W_{F''}}", from=1-1, to=1-3]
	\arrow["\varepsilon"', from=1-1, to=2-2]
	\arrow["\varepsilon", from=1-3, to=2-2]
\end{tikzcd}\] where $R^0$ denotes the set of virtual representations of formal dimension 0.

The stabilizer $I_{F_{\alpha}}$ of $\mathfrak{g}^{\vee}_{\tilde{\alpha}^{\vee}}$ in $I_F,$ is a subgroup of $I_F$ and $\mathfrak{g}_{\tilde{\alpha}^{\vee}}^{\vee}$ is a representation of $I_{F_{\alpha}}$. Since
$$\mathfrak{g}_{\alpha^{\vee}}^{\vee}\cong\ind_{I_{F_{\alpha}}}^{I_F}\mathfrak{g}_{\tilde{\alpha}^{\vee}}^{\vee}$$
as $I_F$-representation, we can compute the value of the $\varepsilon$-factor on $\mathfrak{g}_{\alpha^{\vee}}^{\vee}$ using its inductivity on degree 0. Since we have inductivity only in degree 0, we consider the virtual representation $\mathfrak{g}^{\vee}_{\tilde{\alpha}^{\vee}}-\triv_{I_{F_{\alpha}}}.$ Then, by inductivity in degree 0, we have $$\varepsilon(0,\mathfrak{g}^{\vee}_{\tilde{\alpha}^{\vee}}-\triv_{I_{F_{\alpha}}},\psi)=\varepsilon(0,\mathfrak{g}_{\alpha^{\vee}}^{\vee}-\ind_{I_{F_{\alpha}}}^{I_F}\triv_{I_{F_{\alpha}}},\psi).$$ The Artin conductor of the trivial representation is just 1, so we only need to compute the Artin conductor of the induction of the trivial representation and of $\mathfrak{g}_{\tilde{\alpha}^{\vee}}^{\vee}.$

Recall that from \cite[Proposition VI.2.5 + Corollary VI.2.1’]{LocalFields} we have a relation between Artin conductors and Herbrand functions. In particular, it says that if $E/F$ is any extension, and $\sigma$ is a character of a degree-1 representation $\tau$ of $\Gal(E/F)$, then 
$$a(\tau)=1+\varphi_{L/K}(r)$$
where $r$ is the smallest integer for which $\tau(\Gal(E/F)_r)\neq 0,$ $\Gal(E/F)_r$ is the $r$-th member of the lower indexing ramification filtration of $\Gal(E/F).$ Applying this to $\mathfrak{g}_{\tilde{\alpha}^{\vee}}^{\vee}$ as an $I_{F_{\alpha}}$-representation, we get 
$$a(\mathfrak{g}_{\tilde{\alpha}^{\vee}}^{\vee})=1+c^{\alpha}$$
with $c^{\alpha}$ the smallest integer such that $(I_{F_{\alpha}})^{c^{\alpha}}$ acts trivially on $\mathfrak{g}_{\tilde{\alpha}^{\vee}}^{\vee}.$ 

It was proven in \cite[Proposition 1.2]{OnDepthZeroChar} that restriction from $W_F\to W_{F_{\alpha}}$ on the $L$-parameter side, corresponds to the norm map $T(F_{\alpha})\to T(F)$ on the automorphic side, so 
$$c^\alpha = c_\alpha$$
where $c_\alpha$ is the one used in Section \ref{TypeSection} for constructing the type. 

The Artin conductor for the induction of the trivial representation was studied in \cite[Theorem A.2]{FengOpdamSolleveld}. We have $$a(\ind_{I_{F_{\alpha}}}^{I_F}\triv_{I_{F_{\alpha}}})=\val_L(\mathcal{D}_{F_{\alpha}/F})f_{F_{\alpha}/K},$$ where $\mathcal{D}_{F_{\alpha}/F}$ is the determinant of $F_{\alpha}/F$ and $f_{F_{\alpha}/F}$ its residual degree. 

We get
$$
|\varepsilon(0,\mathfrak{g}_{\alpha^{\vee}}^{\vee},\psi)|=|\varepsilon(0,\mathfrak{g}_{\tilde{\alpha}^{\vee}}^{\vee},\psi)||\varepsilon(0,\ind_{I_{F_{\alpha}}}^{I_F}\triv,\psi)| =q_F^{f_{F_{\alpha}/F}(1+c^{\alpha})+\val_{F_{\alpha}}(\mathcal{D}_{F_{\alpha}/F})f_{F_{\alpha}/F}}.
$$
Putting it all together, we obtain:

\begin{lemma}\label{EpsilonLemma}

With notation as above, consider $\mathfrak{g}^{\vee}$ as an $I_F$-representation. Then

$$|\varepsilon(0,\mathfrak{g}^{\vee},\psi)|=q_F^{a(\mathfrak{t}^{\vee})/2}\prod_{\alpha\in R(\mathcal{G},\mathcal{S})^+, c_\alpha\neq 0}q_F^{f_{F_\alpha/F}(1+c_{\alpha})+f_{F_\alpha/F}\val_{F_{\alpha}}{(\mathcal{D}_{F_{\alpha}/F})}}.$$
\end{lemma}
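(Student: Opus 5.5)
The plan is to use the multiplicativity of $|\varepsilon(0,-,\psi)|=q_F^{a(-)/2}$ in the Artin conductor. The Artin conductor is additive in $W$, so I first decompose $\mathfrak{g}^{\vee}$ as an $I_F$-representation:
$$\mathfrak{g}^{\vee}=\mathfrak{t}^{\vee}\oplus\bigoplus_{\alpha\in R(\mathcal{G},\mathcal{S})}\mathfrak{g}^{\vee}_{\alpha^{\vee}}.$$
Each summand is $\varphi(I_F)$-stable, because $\varphi(I_F)\subset T^{\vee}\rtimes I_F$ acts through $T^{\vee}$ and the Galois action, which only permutes the absolute root spaces inside a fixed relative root. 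If some relative roots have several chosen preimages (non-reduced case), I group the absolute roots restricting to $\alpha$ accordingly. This gives
$$|\varepsilon(0,\mathfrak{g}^{\vee},\psi)|=q_F^{a(\mathfrak{t}^{\vee})/2}\prod_{\alpha}q_F^{a(\mathfrak{g}^{\vee}_{\alpha^{\vee}})/2}.$$
Summands with $c_\alpha=0$ carry trivial inertia action, so their conductor vanishes and they drop out of the product.

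Next I pair $\alpha$ with $-\alpha$. The map $\mathfrak{g}^{\vee}_{\tilde\alpha^{\vee}}\mapsto\mathfrak{g}^{\vee}_{-\tilde\alpha^{\vee}}$ takes the character by which $I_{F_\alpha}$ acts to its inverse. Since $\dim(W/W^{D_j})$ is unchanged under dualizing, $a(\mathfrak{g}^{\vee}_{\alpha^{\vee}})=a(\mathfrak{g}^{\vee}_{-\alpha^{\vee}})$. The product over all roots is therefore the product over positive roots with the exponent doubled, and the exponent $a/2$ becomes $a(\mathfrak{g}^{\vee}_{\alpha^{\vee}})$. This accounts for the form of the statement, with the product over $R^+$ and no factor $1/2$.

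For a single positive $\alpha$ with $c_\alpha\neq0$, I compute $a(\mathfrak{g}^{\vee}_{\alpha^{\vee}})$ from $\mathfrak{g}^{\vee}_{\alpha^{\vee}}\cong\ind_{I_{F_\alpha}}^{I_F}\mathfrak{g}^{\vee}_{\tilde\alpha^{\vee}}$. I apply the conductor–discriminant formula to the induced representation; this is the degree-0 inductivity of \cite[Theorem A.2]{FengOpdamSolleveld} applied to $\mathfrak{g}^{\vee}_{\tilde\alpha^{\vee}}-\triv$. It gives
$$a(\ind W')=f_{F_\alpha/F}\,a(W')+\dim(W')\,\val_{F_\alpha}(\mathcal{D}_{F_\alpha/F})\,f_{F_\alpha/F}.$$
For the one-dimensional $W'=\mathfrak{g}^{\vee}_{\tilde\alpha^{\vee}}$, Serre's formula gives $a(W')=1+c^\alpha$ whenever the character is ramified. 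By \cite[Proposition 1.2]{OnDepthZeroChar}, $c^\alpha=c_\alpha$, because the character of $I_{F_\alpha}$ on $\mathfrak{g}^{\vee}_{\tilde\alpha^{\vee}}$ is $\tilde\alpha^{\vee}\circ\varphi|_{W_{F_\alpha}}$, which corresponds to $\chi\circ\alpha^{\vee}$ via the norm. Combining these yields the exponent
$$f_{F_\alpha/F}(1+c_\alpha)+f_{F_\alpha/F}\,\val_{F_\alpha}(\mathcal{D}_{F_\alpha/F}).$$

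I expect the main obstacle to be the careful normalization of the conductor of the induced representation. Two points need checking. First, the residue degree factor must appear correctly, since $a$ is measured with respect to the valuation of $F_\alpha$ versus that of $F$. Second, the trivial-representation correction in degree 0 must give exactly the discriminant term, with no leftover $q_F$-power. For this I plan to verify the identity on the simplest case: $F_\alpha/F$ unramified quadratic with a tamely ramified character. There the discriminant vanishes and the conductor of the induced representation should be $2=f(1+0)$, which fixes the normalization.
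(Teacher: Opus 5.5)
\textbf{Comparison with the paper.} Your route is the paper's route. You decompose $\mathfrak g^\vee$ into $\mathfrak t^\vee$ and the pieces $\mathfrak g^\vee_{\alpha^\vee}$, and treat each piece as induced from the character on $\mathfrak g^\vee_{\tilde\alpha^\vee}$. You then apply Serre's formula $a=1+c^\alpha$ together with the induction formula involving the different, and identify $c^\alpha=c_\alpha$ through the norm.

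Two of your additions improve on the paper:
\begin{enumerate}
\item Pairing $\alpha$ with $-\alpha$ is what actually justifies a product over $R(\mathcal G,\mathcal S)^+$ with no factor $1/2$. The $W_F$-invariant Killing form gives $\mathfrak g^\vee_{-\alpha^\vee}\cong(\mathfrak g^\vee_{\alpha^\vee})^*$. The paper's displayed single-root formula silently drops this $1/2$.
\item The induction must be $\Ind_{W_{F_\alpha}}^{W_F}$, not $\ind_{I_{F_\alpha}}^{I_F}$; the latter has dimension $e_{F_\alpha/F}$ rather than $[F_\alpha:F]$. This is exactly where your factor $f_{F_\alpha/F}$ comes from, and it settles the normalization worry in your last paragraph.
\end{enumerate}

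\textbf{The gap.} The step that fails is in your first paragraph: you discard every summand with $c_\alpha=0$ on the grounds that inertia acts trivially on it.
\begin{itemize}
\item \emph{Ramified $F_\alpha/F$.} Here $I_F$ moves $\tilde\alpha^\vee$, so $\varphi_\chi(I_F)$ permutes the root spaces inside $\mathfrak g^\vee_{\alpha^\vee}$ whatever $\chi$ is; your own stability argument says as much. Your induction formula with $a(W')=0$ then gives $a(\mathfrak g^\vee_{\alpha^\vee})=f_{F_\alpha/F}\val_{F_\alpha}(\mathcal D_{F_\alpha/F})>0$.
\item \emph{A concrete counterexample.} Take $\chi=1$ in Example \ref{ExampleU4} with $E/F$ ramified. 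Every $c_\alpha$ vanishes, so the printed right-hand side is $q_F^{a(\mathfrak t^\vee)/2}=q_F$. The true value is $q_F^{a(\mathfrak g^\vee)/2}=q_F^{3}$, since $\dim\mathfrak{gl}_4-\dim\mathfrak{sp}_4=6$.
\item \emph{Your calibration example.} It shows the same problem from the other side. With the normalization $a=1+c^\alpha$ and $c^\alpha=c_\alpha$, a tamely ramified character has $c_\alpha=0$, yet you correctly expect its summand to have conductor $2$.
\item \emph{Ramified odd unitary groups.} Take an absolute root lying over a divisible relative root $2\alpha$ (e.g.\ $e_1-e_3$ for $U_3$). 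Its Galois stabilizer acts on the corresponding root space of $\mathfrak g^\vee$ by $-1$. So for $\chi=1$ that summand is the ramified quadratic character although $c_{2\alpha}=0$, and $c^{2\alpha}=c_{2\alpha}$ also fails.
\end{itemize}
The correct criterion for dropping a summand is that $\varphi_\chi(I_F)$ act trivially on it. In particular, the different term must be kept for every $\alpha\in R(\mathcal G,\mathcal S)^+$, not only for those with $c_\alpha\neq 0$.

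\textbf{Status of the statement.} The paper makes the same assertion (contradicting its own preceding sentence about $F_\alpha\neq F$), so this is a defect of the statement as printed rather than of your strategy. The proof of Lemma \ref{Lemma2} actually relies on the corrected form. There the different terms are absorbed into $q_F^{a(\mathfrak g^\vee)/2}$, where $a(\mathfrak g^\vee)$ sums them over all of $R(\mathcal G,\mathcal S)$.
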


Notice that if $F_{\alpha}/F$ is unramified, then the valuation of the different is 0. For ramified groups (more generally, non-unramified groups) it was already noted by Solleveld \cite[\S 2, equation (18)]{RamifiedUnipotent} that the Haar measure on $G$ should be normalized in such a way so that the Artin conductor of $\mathfrak{t}^\vee$ and the valuation of the various determinants cancel out. We will explain this in Section \ref{FD}.

%% file: FD.tex
\section{Formal degree conjecture for principal series}\label{FD}

In this section, we will prove Conjecture \ref{HII} for principal series of quasi-split groups. We will just need to glue the pieces that we have built up so far, in a similar way as in \cite{Ricci}. We start by recalling the definition of the formal degree and fixing some notation for the remainder of this paper.

For a closed subgroup $A$ of the center $Z(G)$ of $G$ such that $Z(G)/A$ is compact, let $\mu_{G/A}$ be a Haar measure on $G/A$. For a smooth irreducible representation $(\pi,V)$ of $G,$ the matrix coefficient $\pi_{v,w}$ of $\pi$ with respect to $v\in V$ and $w$ in the smooth dual $V^*$ of $V$, is the function $\pi_{v,w}: G\to\mathbb{C}$ defined by $$\pi_{v,w}(x)=\bigl\langle\pi(x)v,w\bigl\rangle.$$ 

If the central character of $\pi$ is unitary, the function $g\mapsto\abs{\pi_{v,w}(g)}^2$ is constant on cosets of $A$, and hence defines a function on $G/A$. We say that $(\pi, V)$ is a discrete series if it has a unitary central character and 
$$\int_{G/A}\abs{\pi_{v,w}(g)}^2d\mu_{G/A}(g)<\infty\text{ for all }v\in V,w\in V^*.$$
Notice that this condition is independent of $A$, and also of the choice of a Haar measure on $G/A$. We already mentioned in Section \ref{EpsilonSec} that discrete series representations conjecturally correspond to tempered discrete $L$-parameters under the local Langlands correspondence. Since every discrete series is unitary there is a canonical isomorphism between $V$ and $V^*$, and we can define a matrix coefficient $\pi_{v,w}$ for $v,w\in V$. We write $\pi_v=\pi_{v,v}.$

\begin{definition}
    Let $(\pi, V)$ be a discrete series representation of $G$. As in \cite[\S 1]{HII}, we fix $A=Z(G)_s$ the maximal $F$-split central torus in $G$. Let $\mu_{G/A}$ be a Haar measure on $G/A$. Then, the formal degree $\fdeg(\pi)=\fdeg(\pi,\mu_{G/A})$ of $\pi$ is the unique number (depending on $\mu_{G/A}$) such that for all $v\in V,$$$\int_ {G/A}\abs{\pi_v(g)}^2\hspace{0.1cm}d\mu_{G/A}(g)=\frac{\abs{v}^2}{\fdeg(\pi)}.$$
\end{definition}

Let $\psi$ be any additive character of $F$ and denote by $\omega_G$ the canonical Haar measure from \cite[\S 5]{HaarMeasureArtinCond}. Then we define $$\mu_{G,\psi}=q_F^{-(a(\mathfrak{g}^{\vee})+\ord(\psi)\dim(G/Z(G)_s))/2}\abs{\omega_G},$$ where $a(\mathfrak{g}^\vee)$ is the Artin conductor of the action of just $I_F$ on $\mathfrak{g}^{\vee}$ (different from before, where we consider the action of $T^{\vee}\rtimes I_F$). This is the Haar measure considered by Solleveld in \cite{RamifiedUnipotent} to prove the Conjecture \ref{HII} for unipotent representations of ramified groups. Notice that 
$$q_F^{a(\mathfrak{g}^{\vee})}=q_F^{a(\mathfrak{t}^{\vee})+\sum_{\alpha\in R(\mathcal{G},\mathcal{S})}f_{F_\alpha/F}\val(\mathcal{D}_{F_\alpha/F})}.$$

Since we fixed $\psi$ an order-0 character, the normalizing factor simplifies. From now on, every $p$-adic group is going to be equipped with this Haar measure.

\subsection{Reduction to unipotent representations and computations}

To compute the formal degree of a representation in $\Irr(G)^{\mathfrak{s}}$ in terms of the associated $L$-parameter, we want to reduce to the case of unipotent representations. In that context, Conjecture \ref{HII} was already proven in \cite{UnipotentFormalDegree}. 

Recall that we denoted $J=Z_{G^{\vee}}(\varphi_{\chi}|_{I_F}).$ This is a complex group that contains $(T^{\vee})^{I_F,\circ}$ as a maximal torus. We define an action of $W_F$ on $J^{\circ}$ in the following way: consider
$$w\cdot x=\varphi_\chi(w)x\varphi_\chi(w^{-1})\text{ for every }x\in J^{\circ}.$$ 

Notice that $w\cdot x=x$ whenever $w\in I_F$ and $x\in J^{\circ}.$ Since $B^{\vee}$ is $W_F$-stable in ${}^LG$ and $\im(\varphi_\chi)\subset B^{\vee}\rtimes W_F,$ this action stabilizes the maximal torus $(T^{\vee})^{I_F,\circ}$ and the set of positive roots defined by $J^{\circ}\cap B^{\vee}.$ Let $\Delta_J$ be the basis of $R(J^{\circ},(T^{\vee})^{I_F,\circ})$ induced by $J^{\circ}\cap B^{\vee}.$ We would like to define an $L$-group ${}^LJ^{\circ}=J^{\circ}\rtimes W_F$ via this action, but if we fix a pinning $\{x_{\alpha^{\vee}}\}_{\alpha^{\vee}\in \Delta_J}$ the action of the Weil group might not preserve it. The inertia subgroup obviously preserves it but the action of the Frobenius might rescale the elements in the pinning via 
$$\Fr\cdot x_\alpha=c_{\alpha}x_{\Fr\cdot \alpha}$$ 
for some $c_\alpha\in\mathbb{C}.$ Since $(T^{\vee})^{I_F,\circ}$ is a maximal torus for $J^{\circ},$ we can pick an element $t\in (T^{\vee})^{I_F,\circ}$ so that 
$$t\cdot c_{\alpha} x_{\Fr\cdot \alpha}\cdot t^{-1}=x_{\Fr\cdot\alpha}\text{ for every }\alpha\in \Delta_J,$$ 
and we consider the action of $W_F$ on $J^{\circ}$ given by the action on $B^{\vee}\cap J$ and $(T^{\vee})^{I_F,\circ}$ described before, and the action on $\{x_\alpha\}_{\alpha\in\Delta_J}$ given by conjugation with $t\varphi_\chi(\Fr).$ 

We form the $L$-group 
$${}^LJ^{\circ}:=J^{\circ}\rtimes W_F=J^{\circ}\rtimes\bigl\langle t\varphi(\Fr), \varphi(w)\text{ for }w\in I_F\bigl\rangle.$$ 

Since $t\in (T^{\vee})^{I_F,\circ},$ the group $\bigl\langle t\varphi(\Fr), \varphi(w)\text{ for }w\in I_F\bigl\rangle$ is actually isomorphic to $W_F$.

Let $\mathcal{J}$ a quasi-split $F$-group such that its Langlands dual is isomorphic to ${}^LJ^{\circ}.$ Notice that $\mathcal{J}$ splits over an unramified extension since the action of the inertia subgroup is trivial on it. 

The group ${}^LJ^{\circ}$ contains the group $(T^{\vee})^{I_F,\circ}\rtimes W_F.$ Moreover, we can see ${}^LJ^{\circ}$ as a subset of ${}^LG$. In doing so, the image of the Frobenius under $\varphi_\chi$ in ${}^LG,$ can be seen as $(t^{-1},t\varphi_\chi(\Fr))$ in ${}^LJ^{\circ}\subset G\rtimes W_F$.

Consider the $L$-parameter $\widetilde{\varphi_{\chi}}$ given by 
$$\widetilde{\varphi_{\chi}}(x)=\begin{cases}
(1,\varphi_\chi(x))  \text{ if } x\in I_F,\\[0.9ex]
\widetilde{\varphi_{\chi}}(\Fr)=(t^{-1},t\varphi_{\chi}(\Fr)),
\end{cases}$$ 
seen as an $L$-parameter for $\mathcal{J}(F).$ Let $\mathfrak{s}^{\vee}_J$ be the inertial equivalence class of $L$-parameters of $\mathcal{J}(F)$ associated with this $\widetilde{\varphi_\chi}.$ There is a bijection \cite{RamifiedUnipotent,UnipotentCorresp}
$$\Irr(\mathcal{H}(\mathfrak{s}_J^{\vee},q_F^{1/2}))\to \Phi_{e}(J^{\circ})^{\mathfrak{s}^{\vee}_J}.$$  

Notice that the root data associated with $\mathcal{H}(\mathfrak{s}_J^{\vee},q_F^{1/2})$ is 
$$\mathcal{R}_{\mathfrak{s}^{\vee}_J} =\big( R_{\mathfrak{s}^{\vee}_J},X^*((T^{\vee,I_F})^{\circ}_{W_F}),R_{\mathfrak{s}^{\vee}_J},X_*((T^{\vee,I_F})^\circ_{W_F})\big)$$
with 
$$R_{\mathfrak{s}^{\vee}_J}=\{f_{F_{\alpha/F}}\alpha^{\vee}:\alpha^{\vee}\in R(Z_{J^{\circ}}(\widetilde{\varphi_\chi}|_{I_F}),(T^{\vee,I_F})^{\circ})_{\red}\}.$$ 
Since $\widetilde{\varphi_\chi}$ is trivial on $I_F$, $Z_{J^{\circ}}(\widetilde{\varphi_\chi}|_{I_F})=J^{\circ},$ and since $f_{F_\alpha/F}$ doesn't change if we see $\alpha^{\vee}$ as a root of $G^{\vee}$ or $J^{\circ},$ we have 
$$\mathcal{R}_{\mathfrak{s}^{\vee}_J}=\mathcal{R}_{\mathfrak{s}^{\vee}},$$ 
with $\mathcal{R}_{\mathfrak{s}^{\vee}}$ the root data associated to $\mathfrak{s}^{\vee}.$ Therefore, the algebras $\mathcal{H}(\mathfrak{s}_J^{\vee},q_F^{1/2})^{\circ}$ and $\mathcal{H}(\mathfrak{s}^{\vee},q_F^{1/2})^{\circ}$ are isomorphic. This isomorphism is induced from an isomorphism of the commutative subalgebras 
$$\mathcal{O}(\mathfrak{s}_J^\vee)\to \mathcal{O}(\mathfrak{s}^\vee),$$ induced from the map $\mathfrak{s}_J^{\vee}\to\mathfrak{s}^{\vee}$ sending $\widetilde{\varphi_\chi}\to \varphi_\chi.$ Here we see $\mathfrak{s}^{\vee}$ and $\mathfrak{s}^{\vee}_J$ as algebraic varieties.

Moreover, the algebra $\mathcal{H}(\mathfrak{s}^{\vee}_J,q_F^{1/2})$ is actually not extended and just equal to $\mathcal{H}(\mathfrak{s}^{\vee}_J,q_F^{1/2})^{\circ}$ \cite[Theorem 4.4]{UnipotentCorresp}. We therefore have an induction map 
$$\mathcal{H}(\mathfrak{s}_J^{\vee},q_F^{1/2})\text{-}\Mod \xrightarrow[]{\Ind} (\mathcal{H}(\mathfrak{s}^{\vee},q_F^{1/2})^{\circ}\rtimes \Gamma_{\mathfrak{s}^{\vee}})\text{-}\Mod = \mathcal{H}(\mathfrak{s}^{\vee},q_F^{1/2})\text{-}\Mod.$$

On the other hand, on the $L$-parameter side, we can push forward any $L$-parameter $\widetilde{\theta}$ in $\Phi(J^{\circ})^{\mathfrak{s}^{\vee}_J}$ along the inclusion ${}^LJ^{\circ}\to{}^LG$ obtaining an $L$-parameter $\theta$ for in $\Phi(G).$ Notice that the image of $\widetilde{\theta}(\SL_2(\mathbb{C}))$ is inside $J^{\circ}\subset G^{\vee}.$ Moreover, $\theta|_{I_F}=\varphi_\chi|_{W_F}$ and $\widetilde{\theta}|_{W_F}=\widetilde{\varphi_\chi}|_{I_F}.$ Therefore, 

$$Z_{G^{\vee}}(\theta)=Z_J(\theta(\SL_2(\mathbb{C}),\varphi_\chi(\Fr))\supset Z_{J^{\circ}}(\widetilde{\theta}(\SL_2(\mathbb{C}),\varphi_\chi(\Fr)))=Z_{J^{\circ}}(\widetilde{\theta}),$$
and we have an induction map 
$$\Phi_e(J^{\circ})^{\mathfrak{s}^\vee_j}\xrightarrow[]{\Ind}\Phi_e(G)^{\mathfrak{\mathfrak{s}^\vee}},$$ 
obtained by sending an enhancement $\rho\in \Irr(\pi_0(Z_{J^{\circ}}(\theta)))$ to $\Ind_{\pi_0(Z_{J^{\circ}}(\theta))}^{\pi_0(Z_{G^{\vee}}(\widetilde{\theta}))}(\rho).$

Let $\pi\in \Rep(G)^\mathfrak{s}$ be an irreducible discrete series and let $(\varphi_\pi,\rho_\pi)$ be the enhanced $L$-parameter of $G$ associated to $\pi.$ We will denote by $\pi'$ the $\mathcal{H}(J_{\chi},\chi_0)$-module associated to $\pi.$ Then we have 
$$\fdeg(\pi)=\frac{\fdeg(\pi')}{\vol(J_\chi)},$$
where the factor $\vol(J_\chi)$ (with respect to our fixed Haar measure $\mu_{G,\psi}$) comes from the normalization needed to make the obvious embedding $\mathcal{H}(J_{\chi},\chi)\to \mathcal{H}(G)$ trace-preserving. We refer to \cite[4.2.2]{OpdamSpectralCorresp} for the definition of the formal degree of a module over a Hecke algebra, and to \cite[\S3,\S4]{TypesPlancherel}  for more information about the Plancherel measure of Hecke algebras of types. 

The isomorphism 
$$\mathcal{H}(J_\chi,\chi_0)^{\Op}\to \mathcal{H}(\mathfrak{s}^{\vee},q_F^{1/2})$$
is just given by identifying the root data and the finite groups extending the affine Hecke algebras, so it does not affect the formal degree.

By Clifford theory, we know that 
$$\pi'\cong \Ind_{\mathcal{H}(\mathfrak{s}^{\vee},q_F^{1/2})^{\circ}}^{\mathcal{H}(\mathfrak{s}_J^{\vee},q_F^{1/2})}(\pi_J\otimes\sigma)$$ 
where $\pi_J$ is some irreducible $\mathcal{H}(\mathfrak{s}_J^{\vee},q_F^{1/2})^{\circ}$-representation and $\sigma$ is a projective representation of $\Gamma_{\mathfrak{s}^{\vee},\pi_J}=\Stab_{\Gamma_{\mathfrak{s}^{\vee}}}(\pi_J)$ (actually a better study of extended affine Hecke algebras shows that $\sigma$ is an actual representation, but this is not going to be relevant for our computations).

With arguments similar to \cite[Lemma 4.5, 4.6]{Ricci}, we obtain 
\begin{equation}\label{firstFD}
    \fdeg(\pi')=\fdeg(\pi_J)\frac{\dim(\sigma)}{\abs{\Gamma_{\mathfrak{s}^{\vee},\pi_J}}}.
\end{equation}

\begin{lemma}\label{Lemma2}
    Let $I_{\mathcal{J}}$ the Iwahori subgroup of $\mathcal{J}(F)$ defined by the set of positive roots $R(J^{\circ},(T^{\vee})^{I_F,\circ})^+$. Then 
    
    $$\frac{\vol(I_{\mathcal{J}})}{\vol(J_\chi)}=\abs{\varepsilon(0,\Ad_{G^{\vee}}\circ\varphi_\chi,\psi)}.$$ 
\end{lemma}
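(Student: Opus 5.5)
We compare both volumes with the volume of the Iwahori subgroup $I$ of $G$ attached to $B$; everything is measured with the normalized Haar measures $\mu_{G,\psi}$ and $\mu_{\mathcal{J}(F),\psi}$. The plan is to express $\vol(I_{\mathcal{J}})$ and $\vol(J_\chi)$ through the Iwahori factorizations of these groups. Then we match the resulting powers of $q_F$ with the exponents appearing in Lemma \ref{EpsilonLemma}, after accounting for the normalization $q_F^{-a(\mathfrak{g}^\vee)/2}$ in $\mu_{G,\psi}$.

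\emph{Step 1: the two volumes relative to Iwahori subgroups.} The group $J_\chi=J_f$ has the Iwahori decomposition $J_f=J_f^+T^0J_f^-$, and the Iwahori subgroup has $I=U^+_{0}T^0U^-_{0+}$ at the barycenter $x$. Hence
\[
\frac{\vol(I)}{\vol(J_f)}=\prod_{\alpha\in R^+}[U_{\alpha,x,0}:U_{\alpha,x,f(\alpha)}]\cdot[U_{-\alpha,x,0+}:U_{-\alpha,x,f(-\alpha)}],
\]
where multipliable roots also pick up the index of $U_{2\alpha}$. For a root $\alpha$ with $c_\alpha=0$ both indices are $1$. For $c_\alpha>0$ the root subgroup $U_\alpha$ is parametrized by $F_\alpha$; in the $\SU_3$-type case it is parametrized by the pair $(F_\alpha,F_{2\alpha})$. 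The two jumps then total $f(\alpha)+f(-\alpha)=c_\alpha$ minus the one step already contained in $I$, measured in units of the valuation of $F_\alpha$ with residue field of size $q_F^{f_{F_\alpha/F}}$. I expect the product to give $q_F^{f_{F_\alpha/F}(1+c_\alpha)}$ up to a bounded correction. This correction has to be tracked carefully: an offset by one step comes from the $0$ versus $0+$ filtration on negative roots, and the multipliable roots contribute separately.

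On the $\mathcal{J}$ side, $\mathcal{J}$ is unramified and $\mathcal{J}^\vee$ has maximal torus $(T^\vee)^{I_F,\circ}$. Therefore $\mathcal{J}$ and $G$ share the torus $T$ up to the relevant identification, so $I_{\mathcal{J}}\cap T=T^0$ and $\vol(T^0)$ cancels.

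\emph{Step 2: matching the measure normalizations.} The canonical measures $|\omega_G|$ and $|\omega_{\mathcal{J}}|$ from \cite{HaarMeasureArtinCond} give the Iwahori subgroups volumes of the form $q_F^{-\dim}|\overline{\mathsf{I}}(\mathfrak{f})|$, computed from the reductive quotients over the residue field. Their ratio then reduces to the finite-field factors of the root subgroups of $G$ that are not roots of $J^\circ$, that is, the roots with $c_\alpha\neq 0$. The additional normalization $q_F^{-a(\mathfrak{g}^\vee)/2}$ appears only for $G$, since $\mathcal{J}$ is unramified and $a=0$ there. By the formula $q_F^{a(\mathfrak{g}^\vee)}=q_F^{a(\mathfrak{t}^\vee)+\sum_\alpha f_{F_\alpha/F}\val(\mathcal{D}_{F_\alpha/F})}$ in \S\ref{FD}, this factor produces exactly the $q_F^{a(\mathfrak{t}^\vee)/2}$ and different-valuation terms in Lemma \ref{EpsilonLemma}, counted over positive roots. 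These terms must be checked against the $\varepsilon$-factor restricted to the $I_F$ action.

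\emph{Step 3: conclude.} We multiply the contributions from Steps 1 and 2 and compare them with the product formula for $|\varepsilon(0,\Ad_{G^\vee}\circ\varphi_\chi,\psi)|$ from Lemma \ref{EpsilonLemma}. For this we use $c^\alpha=c_\alpha$ and the fact that the roots with $c_\alpha=0$ are precisely the relative roots of $\mathcal{J}$ and contribute trivially on both sides.

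The main obstacle is Step 1 combined with Step 2: getting the exact exponent per root, namely $f_{F_\alpha/F}(1+c_\alpha)$ against $c_\alpha$ together with the $\pm$ one-step offsets and the factor of $1/2$. This is delicate for non-reduced roots, for ramified $F_\alpha$ (where $e\neq1$ enters $f$), and for the precise normalization of $|\omega_G|$. I would first carry out the computation for $\SL_2$ and $\SU_3$ in their ramified and unramified forms, and then reduce the general case root by root using the product decomposition of $J_f$ and of $\varepsilon$.
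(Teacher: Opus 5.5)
You have the same architecture as the paper's proof. You factor through the Iwahori subgroup $I$ of $G$ and get $\vol(I_{\mathcal J})/\vol(I)$ from the normalized measures, using $a(\Lie J^\circ)=0$. You get $\vol(I)/\vol(J_\chi)$ from the Iwahori factorizations. You then match with Lemma~\ref{EpsilonLemma} after absorbing $q_F^{a(\mathfrak t^\vee)/2}$ and the different terms into $q_F^{a(\mathfrak g^\vee)/2}$. However, the per-root exponent, which you leave as the ``main obstacle'', is the actual content of the lemma, and the value you predict for Step~1 is wrong. Suppose $[I:J_\chi]$ contributed $q_F^{f_{F_\alpha/F}(1+c_\alpha)}$ and Step~2 then added a further factor for each root with $c_\alpha\neq 0$, as you also say it does. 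Then the ``$1$'' would be counted twice.

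In the paper the two pieces separate cleanly. One writes
\[
\vol(I)=q_F^{-a(\mathfrak g^\vee)/2}\abs{\overline I}\,q_F^{-(\dim (G^\vee)^{I_F}+\dim\overline I)/2},
\]
and similarly for $I_{\mathcal J}$, and uses $\abs{\overline I}=\abs{\overline{I_{\mathcal J}}}$ from \cite[Corollary B.7.12]{KalPras}. This point count replaces your ``$\vol(T^0)$ cancels'', which is not literally meaningful. The torus of $\mathcal J$ is the unramified torus dual to $(T^\vee)^{I_F,\circ}$, not $T$, and the two volumes are taken in different groups. One then gets $\vol(I_{\mathcal J})/\vol(I)=q_F^{a(\mathfrak g^\vee)/2}q_F^{(\dim(G^\vee)^{I_F}-\dim J^\circ)/2}$. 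The dimension difference counts the roots of $(G^\vee)^{I_F}$ with respect to $(T^\vee)^{I_F,\circ}$ that are not roots of $J^\circ$, namely those with $c_\alpha\neq0$. Each relative root accounts for $f_{F_\alpha/F}$ of them, so this ratio gives $\prod_{\alpha\in R^+,\,c_\alpha\neq0}q_F^{f_{F_\alpha/F}}$; this is where the ``$1$'' in $1+c_\alpha$ comes from.

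Step~1 must therefore produce only the $c_\alpha$-part, which the paper records as $\vol(I)/\vol(J_\chi)=\prod_{\alpha\in R^+}q_F^{f_{F_\alpha/F}c_\alpha}$. The one-step loss you noticed on negative roots ($U_{-\alpha,0+}$ versus $U_{-\alpha,0}$) is real. It is exactly what the Iwahori comparison compensates: for each root, $[I:J_\chi]$ falls short of the conductor contribution in Lemma~\ref{EpsilonLemma} by precisely $q_F^{f_{F_\alpha/F}}$. So the check you actually need is that $c_\alpha$ is normalized identically in $f(\alpha)+f(-\alpha)$ and in $a(\mathfrak g^\vee_{\tilde\alpha^\vee})$, with the same units ($F$ versus $F_\alpha$) and the same depth/conductor convention. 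No separate $\SL_2$/$\SU_3$ case analysis is needed for that.
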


\begin{proof}
     Let $I=\bigl\langle T^0,\{U_{\alpha,0}\}_{\alpha\in R(\mathcal{G},\mathcal{S})^+,} \{U_{\alpha,1}\}_{\alpha\in R(\mathcal{G},\mathcal{S})^-}\bigl\rangle$ be the Iwahori subgroup of $G$ associated to our fixed set of positive roots. Recall that we are normalizing the Haar measure on $G$ and $\mathcal{J}(F)$ so that 
     $$\vol(I)=q_F^{-a(\mathfrak{g}^{\vee})/2}\cdot\abs{\overline{I}}\cdot q_F^{-(\dim((G^{\vee})^{I_F})+\dim(\overline{I}))/2},$$
    
    $$\vol(I_{\mathcal{J}})= q_F^{-a(\Lie(J^{\circ}))/2} \cdot \abs{\overline{I_{\mathcal{J}}}} \cdot q_F^{-(\dim((J^{\circ})^{I_F})+\dim(\overline{I_{\mathcal{J}}}))/2},$$

    where $\overline{I}$ and $\overline{I_\mathcal{J}}$ consist of the $k_F$-points of the maximal reductive quotients of $I$ and $I_\mathcal{J}$ respectively.
    
    The inertia subgroup acts trivially on $J^{\circ}$ and on $\Lie(J^{\circ}),$ so $a(\Lie(J^{\circ}))=0.$ Since $(T^{\vee})^{I_F,\circ}\subset J,$ we can see the maximal unramified quotient subtorus of $T$ as a maximal torus of $\mathcal{J}(F).$ Therefore, from \cite[Corollary B.7.12]{KalPras} we have $$\abs{\overline{I}}=\abs{\overline{I_{\mathcal{J}}}},$$
    and    
    $$\frac{\vol(I_{\mathcal{J}})}{\vol(I)}=q_F^{a(\mathfrak{g})/2}\cdot q_F^{(\dim((G^{\vee})^{I_F}-\dim((J^{\circ}))/2}.$$

    Since the maximal torus in $(G^{\vee})^{I_F}$ is $(T^{\vee})^{I_F,\circ}$, the difference between $\dim(J^{\circ})$ and $\dim((G^{\vee})^{I_F})$ is just the difference of the number of absolute roots in these groups. 

    Let $\alpha\in R((G^{\vee})^{I_F},(T^{\vee})^{I_F,\circ}).$ Then, if $c_\alpha\neq 0,$ we have $\alpha^{\vee}\circ\varphi_\chi(I_F)\neq1$ and $\alpha^{\vee}$ is not in $R(J^{\circ},(T^{\vee})^{I_F,\circ}).$ On the other hand, if $c_\alpha=0,$ then $\alpha\in R(J^{\circ},(T^{\vee})^{I_F,\circ}).$ Therefore we have
   
    $$\frac{\vol(I_\mathcal{J})}{\vol(I)} = q_F^{a(\mathfrak{g}^\vee)/2}\cdot \prod_{\substack{\alpha\in R((G^{\vee})^{I_F},(T^{\vee})^{I_F,\circ}) ),\\ c_{\alpha}\neq0}} q_F^{1/2}= q_F^{a(\mathfrak{g}^\vee)/2}\cdot\prod_{\substack{\alpha\in R(\mathcal{G},\mathcal{S})^+,\\c_{\alpha}\neq0}} q_F^{f_{F_\alpha/F}},$$ 

    where the last equality comes from the fact that relative roots are $I_F$-invariant and each relative root $\alpha$ corresponds to $f_{F_\alpha/F}$ many absolute roots.

    On the other hand, one easily sees that
    $$\frac{\vol(I)}{\vol(J_\chi)}=\prod_{\alpha\in R(\mathcal{G},\mathcal{S})^+}q_F^{f_{F_{\alpha}/F}c_{\alpha}}.$$

    We computed the absolute value of the $\varepsilon$-factor in Section \ref{EpsilonSec}. Using Lemma \ref{EpsilonLemma} we get 

\begin{align*}
    \left| \varepsilon(0, \Ad_{G^{\vee}} \circ \varphi_{\chi}, \psi) \right| =& q_F^{a(\mathfrak{t}^{\vee})/2}\hspace{-0.5cm}\prod_{\substack{\alpha\in R(\mathcal{G},\mathcal{S})^+ \\ c_\alpha\neq 0}}q_F^{f_{F_\alpha/F}(1+c_{\alpha})+f_{F_\alpha/F}\val_{F_{\alpha}}{(\mathcal{D}_{F_{\alpha}/F})}}\\
 =& q_F^{a(\mathfrak{g}^{\vee})/2}\prod_{\substack{\alpha\in R(\mathcal{G},\mathcal{S})^+ \\ c_\alpha\neq 0}}q_F^{f_{F_\alpha/F}(1+c_{\alpha})}= \frac{\vol(I_{\mathcal{J}})}{\vol(J_\chi)}.\qedhere\end{align*}

\end{proof}

\begin{lemma}\label{Lemma1}
   We have
    
    $$\frac{\dim(\rho_{\pi_J})}{\abs{S_{\varphi_{\pi_J}}^{\sharp}}}= \frac{\dim(\rho_{\pi})}{\abs{S^{\sharp}_{\varphi_{\pi}}}}\cdot \frac{\abs{\Gamma_{\mathfrak{s}^{\vee},\pi_J}}}{\dim(\sigma)}.$$
\end{lemma}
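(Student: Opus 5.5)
We plan to compare the two sides through the chain of bijections $\Irr(\mathcal{H}(\mathfrak{s}^{\vee},q_F^{1/2}))\leftrightarrow\Phi_e(G)^{\mathfrak{s}^{\vee}}$ and $\Irr(\mathcal{H}(\mathfrak{s}^{\vee}_J,q_F^{1/2}))\leftrightarrow\Phi_e(J^{\circ})^{\mathfrak{s}^{\vee}_J}$. Both pass through Lusztig's two reductions and the geometric classification of Theorem \ref{discclas}, and all of these are compatible with induction along $\Gamma_{\mathfrak{s}^{\vee}}$: the reductions by the lemma on Diagram \ref{Diagram1}, the geometric classification by the cited result of \cite{GradDisc}. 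The output of this comparison should be that the Clifford-theoretic decomposition
\[
\pi'\cong\Ind(\pi_J\otimes\sigma)
\]
on the Hecke side is mirrored by a decomposition on the parameter side. That is, $\varphi_\pi$ is the pushforward of $\varphi_{\pi_J}$ along ${}^LJ^{\circ}\hookrightarrow{}^LG$, and
\[
\rho_\pi\cong\Ind_{\pi_0(Z_{G^{\vee}}(\varphi_\pi))_{\rho_{\pi_J}}}^{\pi_0(Z_{G^{\vee}}(\varphi_\pi))}(\tau\otimes\rho_{\pi_J}),
\]
where $\tau$ is (the dual of) $\sigma$, viewed through an identification of the stabilizer $\Gamma_{\mathfrak{s}^{\vee},\pi_J}$ with the stabilizer of $\rho_{\pi_J}$.

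\textbf{Step 1: the component groups.} Since $\theta|_{I_F}=\varphi_\chi|_{I_F}$, any element centralizing $\varphi_\pi$ lies in $J$. I will therefore identify $Z_{G^{\vee}}(\varphi_\pi)/Z_{J^{\circ}}(\varphi_{\pi_J})$ with a subgroup of $J/J^{\circ}$, and show that this subgroup is the image of the $\varphi_\pi$-stabilizing part of $\Gamma_{\mathfrak{s}^{\vee}}$. Here I use that $W_{\mathfrak{s}^{\vee}}=W^{\circ}_{\mathfrak{s}^{\vee}}\rtimes\Gamma_{\mathfrak{s}^{\vee}}$ and that $\Gamma_{\mathfrak{s}^{\vee}}$ is realized by components of $J$ normalizing $B^{\vee}\cap J^{\circ}$ and commuting with $\varphi_\chi(\Fr)$ up to $(T^{\vee})^{I_F,\circ}$. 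After passing to the quotients by $Z(G^{\vee})^{W_F}$ and the corresponding center for $J^{\circ}$, defining $S^{\sharp}$, this should give an exact sequence
\[
1\to S^{\sharp}_{\varphi_{\pi_J}}\to S^{\sharp}_{\varphi_\pi}\to\Gamma_{\mathfrak{s}^{\vee},\varphi_\pi}\to1 .
\]
Consequently
\[
|S^{\sharp}_{\varphi_\pi}|=|S^{\sharp}_{\varphi_{\pi_J}}|\,|\Gamma_{\mathfrak{s}^{\vee},\varphi_\pi}|.
\]
I also have to check that the two centers agree, so that the sharp versions behave the same way. Because $\pi$ is discrete, both centralizers are finite modulo center, and this should follow.

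\textbf{Step 2: dimension count.} From the induced form of $\rho_\pi$,
\[
\dim\rho_\pi=\dim\sigma\cdot\dim\rho_{\pi_J}\cdot\frac{|\Gamma_{\mathfrak{s}^{\vee},\varphi_\pi}|}{|\Gamma_{\mathfrak{s}^{\vee},\pi_J}|},
\]
since the index of the stabilizer of $\rho_{\pi_J}$ in $S^{\sharp}_{\varphi_\pi}$ equals $[\Gamma_{\mathfrak{s}^{\vee},\varphi_\pi}:\Gamma_{\mathfrak{s}^{\vee},\pi_J}]$. Dividing by the order formula from Step 1, the factor $|\Gamma_{\mathfrak{s}^{\vee},\varphi_\pi}|$ cancels and yields exactly the claimed identity. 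The same cancellation appears in \cite[Lemma 4.6]{Ricci}, which I will imitate.

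\textbf{Main obstacle.} The hard part is Step 1 together with the matching of stabilizers. I must show that the Hecke-side group $\Gamma_{\mathfrak{s}^{\vee}}$, acting on $\mathcal{H}(\mathfrak{s}^{\vee},q_F^{1/2})^{\circ}$, corresponds under \cite{quasisplitps} and \cite{AffineHeckeForLparameters} to the component group $\pi_0$ of $Z_{G^{\vee}}(\varphi_\pi)$ acting on enhancements of $J^{\circ}$-parameters. This must hold in such a way that $\Stab_{\Gamma_{\mathfrak{s}^{\vee}}}(\pi_J)$ equals the stabilizer of $\rho_{\pi_J}$ and not merely has the same order. I would first try to reduce to the graded Hecke algebra level via $L_1$ and $L_2$. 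There, the statement becomes exactly the $M$ versus $M^{\circ}$ comparison of the lemma from \cite{GradDisc}, with $M=Z_J(\exp\sigma_0)$-type groups, and $M/M^{\circ}$ realized by $W'_{F_u,u}$ modulo $W_{F_u,u}$. The residual risk is a twist by a $2$-cocycle, but since only dimensions and orders enter, a projective $\sigma$ would not change the count.
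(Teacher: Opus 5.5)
Your overall architecture matches the paper's. You transport the induced form of $\rho_\pi$ from $\pi'\cong\Ind(\pi_J\otimes\sigma)$ via the compatibility lemma of Section~\ref{Reductions} and \cite{GradDisc}. You identify the index of $\pi_0(Z_{J^{\circ}}(\varphi_{\pi_J}))$ in the stabilizer of $\rho_{\pi_J}$ with $\abs{\Gamma_{\mathfrak{s}^{\vee},\pi_J}}$; the paper simply cites \cite[Lemma 3.12(b)]{GradDisc} for this, so it is not where the work lies. Finally, an auxiliary index cancels.

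The genuine gap is the step you wave through at the end of Step~1, which is exactly the part the paper actually proves. $S^{\sharp}_{\varphi}$ is not a quotient by $Z(G^{\vee})^{W_F}$. It is the centralizer in the subgroup $(G/Z(G)_s)^{\vee}=G^{\vee}_{\der}Z(G^{\vee})_{\an}$ of $G^{\vee}$. For discrete $\varphi_\pi$ it surjects onto $\pi_0(Z_{G^{\vee}}(\varphi_\pi))$ with kernel $Z(G^{\vee})_s\cap G^{\vee}_{\der}Z(G^{\vee})_{\an}$, and likewise for $\mathcal{J}$. Dividing by the centre throws away precisely this finite kernel. Granting the non-sharp sequence, your sharp exact sequence therefore amounts to $Z(G^{\vee})_s\cap G^{\vee}_{\der}Z(G^{\vee})_{\an}=Z(J^{\circ})_s\cap J^{\circ}_{\der}Z(J^{\circ})_{\an}$. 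Discreteness only gives $Z(G^{\vee})_s=Z(J^{\circ})_s$ and says nothing about these finite groups, so ``this should follow'' is not an argument. The paper proves the equality (its \eqref{verylastequation}) from $J^{\circ}_{\der}=J^{\circ}\cap G^{\vee}_{\der}$ and $Z(G^{\vee})_{\an}\subseteq Z(J^{\circ})_{\an}$.

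Be aware of two further requirements. That inclusion needs $Z(G^{\vee})^{\circ}\subseteq J^{\circ}$. Your identification of the image of $Z_{G^{\vee}}(\varphi_\pi)$ in $J/J^{\circ}$ with a subgroup of $\Gamma_{\mathfrak{s}^{\vee}}$ needs $J\cap T^{\vee}=(T^{\vee})^{I_F}$ to be connected. Both hold automatically when $I_F$ acts trivially on $G^{\vee}$ (e.g.\ $\mathcal{G}$ unramified), but not for ramified $\mathcal{G}$, and there your sequence fails.

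Take $\mathcal{G}=\mathrm{Res}_{E/F}GL_2$ with $E/F$ ramified, $\chi=1$ and $\pi$ Steinberg. Then $J=J^{\circ}$ is the diagonal $GL_2$, $\mathcal{J}=GL_2$ and $\Gamma_{\mathfrak{s}^{\vee}}=1$. Yet $S^{\sharp}_{\varphi_\pi}=\{(z,z):z^4=1\}$ has order $4$ while $\abs{S^{\sharp}_{\varphi_{\pi_J}}}=2$.

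Take ramified $U_3$ with $\chi=1$ and $\pi$ Steinberg. Then $J=O_3$ and $\mathcal{J}=SL_2$. The element $-I$ centralizes $\varphi_\pi$, lies in $J\setminus J^{\circ}$, and is trivial in $W_{\mathfrak{s}^{\vee}}$. Hence $\abs{S^{\sharp}_{\varphi_\pi}}=2$ while $\abs{S^{\sharp}_{\varphi_{\pi_J}}}=1$.

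In both examples the two sides of the displayed identity differ by a factor $2$. So in the ramified case a correction factor is unavoidable, and it must be tracked together with the volume and central-lattice normalizations on the formal-degree side. Once you supply an actual proof of the centre comparison, your plan works when $I_F$ acts trivially on $G^{\vee}$.
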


\begin{proof}

   The $L$-parameter $\varphi_\pi$ is discrete, so $Z_{(G/Z(G)_s)^{\vee}}(\varphi_\pi)$ is finite, therefore $S^{\sharp}_{\varphi_\pi}=Z_{(G/Z(G)_s)^{\vee}}(\varphi_\pi)$. Again from the discussion in Section \ref{ReductionSection}, we have 
   
    $$\rho_\pi=\Ind_{\pi_0(Z_{J}(\varphi_{\pi_J}))_{\rho_{\pi_J}}}^{\pi_0(Z_{J}(\varphi_{\pi_J}))}(\rho_{\pi_J}\otimes \sigma),$$ 
    where $\pi_0(Z_{J}(\varphi_{\pi_J}))_{\rho_{\pi_J}}$ denotes the stabilizer of $\rho_{\pi_J}$ in $\pi_0(Z_{J}(\varphi_{\pi_J}))$. Then 
    \begin{align*}
    \dim(\rho_\pi)=&\dim(\rho_{\pi_J})\cdot\dim(\sigma)\cdot[\pi_0(Z_{J}(\varphi_{\pi_J})):\pi_0(Z_{J}(\varphi_{\pi_J}))_{\rho_{\pi_J}}]\\ =&\dim(\rho_{\pi_J})\cdot\dim(\sigma)\cdot\frac{\abs{\pi_0(Z_{J}(\varphi_{\pi_J}))}}{\abs{\pi_0(Z_{J}(\varphi_{\pi_J}))_{\rho_{\pi_J}}}}
    \end{align*}

    But from \cite[Lemma 3.12(b)]{GradDisc}
    $$\frac{\abs{\pi_0(Z_J(\varphi_{\pi_J}))_{\rho_{\pi_J}}}}{\abs{\pi_0(Z_{J^{\circ}}(\varphi_{\pi_J}))}}=\abs{\Stab_{\Gamma_{\mathfrak{s}^{\vee}}}(\pi_J)}$$  

    So, what is left to prove is: 
    \begin{equation}\label{LastEquation}\abs{S^{\sharp}_{\varphi_{\pi_J}}}=\abs{S_{\varphi_\pi}}\frac{\abs{\pi_0(Z_{J^{\circ}}(\varphi_{\pi_J}))}}{\abs{\pi_0(Z_{J}(\varphi_{\pi_J}))}}.\end{equation}

    In fact, if we prove it, we conclude
    $$\frac{\dim(\rho_{\pi_J})}{\abs{S^{\sharp}_{\varphi_{\pi_J}}}}=\frac{\dim(\rho_{\pi})}{\abs{S^{\sharp}_{\varphi_{\pi_J}}}}\cdot \frac{1}{\dim(\sigma)}\cdot \frac{\abs{\pi_0(Z_J(\varphi_{\pi_J}))_{\rho_{\pi_J}}}}{\abs{\pi_0(Z_{J^{\circ}}(\varphi_{\pi_J}))}} = \frac{\dim(\rho_{\pi})}{\abs{S^{\sharp}_{\varphi_{\pi}}}}\cdot \frac{\abs{\Gamma_{\mathfrak{s}^{\vee},\pi_J}}}{\dim(\sigma)} .$$

We have 
$$Z_{G^{\vee}}(\varphi_\pi))=Z_{Z(G)_sG^{\vee}_{\der}Z(G^{\vee})_{\an}}(\varphi_\pi)=Z(G)_sZ_{G^{\vee}_{\der}Z(G^{\vee})_{\an}}(\varphi_\pi)$$ therefore, the difference between $S^{\sharp}_{\varphi_\pi}$ and $\pi_0(Z_{G^{\vee}}(\varphi_\pi))$ consists only of elements in $Z(G)_s\cap G_{\der}^{\vee}Z(G^{\vee})_{\an}.$ Since $\varphi_\pi$ is discrete,  $Z_{G^{\vee}_{\der}Z(G^{\vee})_{\an}}(\varphi_\pi)$ is finite and 
$$\pi_0(Z_{G^{\vee}}(\varphi_\pi))=Z_{G^{\vee}_{\der}Z(G^{\vee})_{\an}}(\varphi_\pi)/(Z(G^{\vee})_s\cap G_{\der}^{\vee}Z(G^{\vee})_{\an}).$$ A similar argument can be done for $J^{\circ},$ so in order to prove \eqref{LastEquation}, we only need to prove 
\begin{equation}\label{verylastequation}
   Z(G^{\vee})_s\cap G_{\der}^{\vee}Z(G^{\vee})_{\an}=Z(J^{\circ})_s\cap J_{\der}^{\circ}Z(J^{\circ})_{\an}.
\end{equation}

Notice that $Z(G^{\vee})_s=Z(J^{\circ})_s$ since they have the same dimension. First we prove that the right hand side is contained in the left hand side. Let $x\in Z(J^{\circ})_s\cap G_{\der}^{\vee}Z(G^{\vee})_{\an}$ and write it as $x=yz\in G_{\der}^{\vee}Z(G^{\vee})_{\an}.$ Then $y= xz^{-1}\in J$ since $Z(G^{\vee})_{\an}\subset Z(J^{\circ})_{\an},$ and from $$J^{\circ}_\der=J^{\circ}\cap G_\der^{\vee} $$  we have $y\in J^{\circ}_\der.$ 

On the other hand, $J^{\circ}_{\der}Z(J^{\circ})_{\an}\subset G_{\der}^{\vee}Z(G^{\vee})_{\an}$. In fact, $J_\der^{\circ}$ is obviously in $G_\der^{\circ},$ and $\Lie(Z(J^{\circ})_{\an})\subset\Lie(G_\der^{\vee})\oplus \Lie(Z(G^{\vee})_{\an}).$ To check this we notice that at the level of Lie algebras we have $$\Lie(J^{\circ})=\Lie(J^{\circ}_{\der})\oplus \Lie(Z(J^{\circ})_{\an})\oplus \Lie(Z(J^{\circ})_{s})\subset \Lie(G^{\vee}),$$ but $$\Lie(G^{\vee})= \Lie(G^{\vee}_{\der})\oplus \Lie(Z(G^{\vee})_{\an})\oplus \Lie(Z(G^{\vee})_{s}),$$ 

so $$\Lie(Z(J^{\circ})_{\an})\subset\Lie(G_\der^{\vee})\oplus \Lie(Z(G^{\vee})_{\an}).$$

Thus, \eqref{verylastequation} holds, which implies \eqref{LastEquation} and completes the proof of the lemma.\qedhere

\end{proof}

\begin{theorem}
    Let $\pi$ be a discrete series representation contained in $\Rep(G)^{\mathfrak{s}},$ and consider the local Langlands correspondence for principal series representations of quasi-split groups from \cite{quasisplitps}. Then, Conjecture \ref{HII} holds, meaning that we have 
    $$\fdeg(\pi)=\frac{\dim(\rho_\pi)}{\abs{S^{\sharp}_{\varphi_\pi}}}\abs{\gamma(0,\Ad_{G^{\vee}}\circ\varphi_{\pi},\psi)}.$$
\end{theorem}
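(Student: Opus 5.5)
The proof assembles the pieces established above into a single chain of equalities. Start from the type $(J_\chi,\chi_0)$ of Theorem \ref{type} and the identity $\fdeg(\pi)=\fdeg(\pi')/\vol(J_\chi)$, where $\pi'$ is the $\mathcal{H}(J_\chi,\chi_0)$-module attached to $\pi$. The isomorphism $\mathcal{H}(J_\chi,\chi_0)^{\Op}\cong\mathcal{H}(\mathfrak{s}^{\vee},q_F^{1/2})$ identifies root data and finite groups, so it preserves the trace and the Plancherel measure. Clifford theory then writes $\pi'$ as induced from $\pi_J\otimes\sigma$, and \eqref{firstFD} gives
$$\fdeg(\pi)=\frac{\fdeg(\pi_J)}{\vol(J_\chi)}\cdot\frac{\dim(\sigma)}{\abs{\Gamma_{\mathfrak{s}^{\vee},\pi_J}}}.$$

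Here $\fdeg(\pi_J)$ is the formal degree of $\pi_J$ as a module over $\mathcal{H}(\mathfrak{s}_J^{\vee},q_F^{1/2})$. Via \cite{UnipotentCorresp,RamifiedUnipotent}, this algebra is the Iwahori–Hecke algebra of $\mathcal{J}(F)$, so $\pi_J$ corresponds to a unipotent (Iwahori-spherical) discrete series $\tilde\pi_J$ of $\mathcal{J}(F)$. We need $\pi_J$ to be discrete; this follows because the isomorphism of the commutative subalgebras identifies the central characters and the Casselman-type criteria on both sides. We then have $\fdeg(\tilde\pi_J)=\fdeg(\pi_J)/\vol(I_{\mathcal{J}})$. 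Substituting gives
$$\fdeg(\pi)=\fdeg(\tilde\pi_J)\,\frac{\vol(I_{\mathcal{J}})}{\vol(J_\chi)}\,\frac{\dim(\sigma)}{\abs{\Gamma_{\mathfrak{s}^{\vee},\pi_J}}}.$$
Next apply the known conjecture for unipotent representations \cite{UnipotentFormalDegree} to $\tilde\pi_J$, Lemma \ref{Lemma2} to replace the volume ratio by $\abs{\varepsilon(0,\Ad_{G^\vee}\circ\varphi_\chi,\psi)}$, and Lemma \ref{Lemma1} to cancel $\dim(\sigma)/\abs{\Gamma_{\mathfrak{s}^{\vee},\pi_J}}$ against the enhancement and component-group terms. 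This yields
$$\fdeg(\pi)=\frac{\dim(\rho_\pi)}{\abs{S^\sharp_{\varphi_\pi}}}\,\abs{\gamma(0,\Ad_{\mathcal{J}^\vee}\circ\varphi_{\pi_J},\psi)}\,\abs{\varepsilon(0,\Ad_{G^\vee}\circ\varphi_\chi,\psi)}.$$
Throughout, the compatibility of the enhancements and of the induction maps on both sides rests on the commutativity of diagram \eqref{Diagram1} and on \cite[Lemma 3.13, 3.16]{GradDisc}.

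It remains to prove the $\gamma$-factor identity
$$\abs{\gamma(0,\Ad_{G^\vee}\circ\varphi_\pi,\psi)}=\abs{\gamma(0,\Ad_{\mathcal{J}^\vee}\circ\varphi_{\pi_J},\psi)}\cdot\abs{\varepsilon(0,\Ad_{G^\vee}\circ\varphi_\chi,\psi)}.$$
The plan is to decompose $\mathfrak{g}^\vee/\mathfrak{a}^\vee$ under $\varphi_\pi(I_F)=\varphi_\chi(I_F)$ as
$$\mathfrak{g}^\vee/\mathfrak{a}^\vee=(\Lie(J^\circ)/\mathfrak{a}^\vee)\oplus V,$$
where $V$ collects the nontrivial inertial isotypic parts. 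Since $\varphi_\pi$ factors through ${}^LJ^\circ$ and $\varphi_\pi$ is $\varphi_{\pi_J}$ pushed forward, $\Ad\circ\varphi_\pi$ restricted to $\Lie(J^\circ)$ agrees with $\Ad_{\mathcal{J}^\vee}\circ\varphi_{\pi_J}$. The twist by $t\in(T^\vee)^{I_F,\circ}$ only conjugates, so the two $\gamma$-factors on that summand coincide. This also requires $\mathfrak{a}^\vee$ to match the corresponding space for $\mathcal{J}$, which follows from $Z(G^\vee)_s=Z(J^\circ)_s$.

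On $V$ the inertia invariants vanish, so the $L$-factors are $1$ and the $\gamma$-factor reduces to the $\varepsilon$-factor. Its absolute value is $q_F^{a(V)/2}$, which depends only on the restriction to $I_F$ (including the $\SL_2$ part, since it acts inside $J^\circ$ and commutes with $\varphi_\chi(I_F)$). Hence it equals the corresponding factor for $\varphi_\chi$. Finally, $\varepsilon(0,\Ad_{G^\vee}\circ\varphi_\chi,\psi)$ restricted to $\Lie(J^\circ)$ has absolute value $1$, because that summand is unramified.

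The main obstacle I expect is this last bookkeeping step. The difficulty is making sure that the Artin conductor contributions of $\mathfrak{t}^\vee$ and the different terms from Lemma \ref{EpsilonLemma} are attributed consistently between the Haar-measure normalization and the $\varepsilon$-factor. In the ramified case $\mathfrak{t}^\vee$ is not inertia-trivial, so part of $\mathfrak{t}^\vee$ lies in $V$. My first step there would be to check that $\Lie((T^\vee)^{I_F,\circ})\subset\Lie(J^\circ)$ and that the remaining part of $\mathfrak{t}^\vee$ contributes exactly the $a(\mathfrak{t}^\vee)$ already absorbed in Lemma \ref{Lemma2}.
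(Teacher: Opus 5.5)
Your proposal is correct and follows the paper's proof: the type volume and \eqref{firstFD}, passage to the Iwahori--Hecke algebra of $\mathcal{J}(F)$ via $\vol(I_{\mathcal{J}})$, the known unipotent case, Lemma \ref{Lemma2} and Lemma \ref{Lemma1}, and finally a $\gamma$-factor comparison based on $\Lie(J^{\circ})=(\mathfrak{g}^{\vee})^{\varphi_\chi(I_F)}$. The one variation is that you split $\mathfrak{g}^{\vee}/\mathfrak{a}^{\vee}$ directly for $\varphi_\pi$, keeping the nilpotent part on the inertia-fixed summand and using $\gamma=\varepsilon$ on the complement $V$, rather than first invoking \cite[Proposition A.1]{UnipotentFormalDegree} to replace $\varphi_\pi,\varphi_{\pi_J}$ by $\varphi_\chi,\widetilde{\varphi_\chi}$; this is if anything slightly more robust, and the $\mathfrak{t}^{\vee}$/different bookkeeping you flag is already absorbed in Lemma \ref{Lemma2}, so nothing further is needed there.
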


\begin{proof}
     The proof relies on the fact that the formal degree conjecture is already known for $\pi_{J}$, since it corresponds to a unipotent representation of $\mathcal{J}(F)$ \cite{UnipotentFormalDegree}. Therefore, the formal degree of the representation of $\mathcal{J}(F)$ corresponding to $\pi_J$ is equal to 
    \begin{equation}\label{UnipFD}
    \fdeg(\pi_J)=\frac{\dim(\rho_{\pi_J})}{\abs{S^{\sharp}_{\varphi_{\pi_J}}}}\abs{\gamma(0,\Ad_{J^{\circ}}\circ\varphi_{\pi_J},\psi)}.
    \end{equation}
    
    The difference between the formal degree of $\pi_J$ and the formal degree of the associated $\mathcal{J}(F)$-representation consists of just the volume of a type, which in this case is just the volume of the Iwahori subgroup $I_{\mathcal{J}}.$ 

    So far, using \eqref{firstFD},\eqref{UnipFD} and Lemma \ref{Lemma1} we have: 
    \begin{align*}
    \fdeg(\pi) &= \frac{\fdeg(\pi_J)\cdot\dim(\sigma)}{\vol(J_\chi)\cdot \abs{\Gamma_{\mathfrak{s}^{\vee},\pi_J}}} \\ &
    = \frac{\dim(\sigma)}{\abs{\Gamma_{\mathfrak{s}^{\vee},\pi_J}}}\frac{\vol(I_{\mathcal{J}})}{\vol(J_{\chi})} \frac{\dim(\rho_{\pi_J})}{|S_{\varphi_{\pi_J}}^{\sharp}|} \cdot \left| \gamma(0, \Ad_{J^{\circ}} \circ \varphi_{\pi_J},\psi) \right|\\ 
    &=
    \frac{\vol(I_{\mathcal{J}})}{\vol(J_{\chi})} \frac{\dim(\rho_{\pi})}{\abs{S_{\varphi_{\pi}}^{\sharp}}} \cdot \left| \gamma(0, \Ad_{J^{\circ}} \circ \varphi_{\pi_J},\psi) \right|.
    \end{align*}

    Since the absolute value of the $\gamma$-factor is not affected by the choice of the nilpotent operator \cite[Proposition A.1]{UnipotentFormalDegree}, we have
    $$\abs{\gamma(0,\Ad_{G^{\vee}}\circ\varphi_\pi,\psi)}=\abs{\gamma(0,\Ad_{G^{\vee}}\circ\varphi_\chi,\psi)},$$
    and $$\abs{\gamma(0,\Ad_{J^{\circ}}\circ\varphi_{\pi_J},\psi)}=\abs{\gamma(0,\Ad_{J^{\circ}}\circ\widetilde{\varphi_\chi},\psi)}.$$
    Notice that by construction $\Lie(J^{\circ})=(\mathfrak{g}^{\vee})^{\varphi_\chi(I_F)},$ and we have an equality $$L(s,\Ad_{G^{\vee}}\circ\varphi_\chi)=L(s,\Ad_{J^{\circ}}\circ\widetilde{\varphi_{\chi}}).$$ 
    Therefore,
    $$\abs{\gamma(0,\Ad_{G^{\vee}}\circ\varphi_\chi,\psi)}=\abs{\gamma(0,\Ad_{J^{\circ}}\circ\widetilde{\varphi_\chi},\psi)}\cdot\abs{\varepsilon(0,\Ad_{G^{\vee}}\circ\varphi_\chi,\psi)},$$
    and using Lemma \ref{Lemma2}, we get 
    \begin{equation*}
    \fdeg(\pi)=\frac{\dim(\rho_\pi)}{\abs{S^\sharp_{\varphi_\pi}}}\cdot\left|\gamma(0,\Ad_{G^{\vee}}\circ\varphi_\pi,\psi)\right|.
    \qedhere
\end{equation*}
\end{proof}